\newenvironment{tab}{\begin{tabbing}
MMMMM\=aaa\=aaa\=aaa\=aaa\=aaa\=aaa\= \kill}{\end{tabbing}}
\def\refmystepcounter#1{\stepcounter{#1}\protect\gdef 
\@currentlabel {\csname p@#1\endcsname \csname 
the#1\endcsname}}
\newcounter {tabnr}
\def\Nat   {\mbox{$\mathbb{N}$}}
\def\IR    {\mbox{$\mathbb{R}$}}
\def\IZ    {\mbox{$\mathbb{Z}$}}
\def\sqleq {\sqsubseteq}
\def\Com   {\mbox {$\quad\{ $}}
\def\moc   {\mbox {$ \}\; $}}
\def\eps   {{\mbox{$\varepsilon$}}}
\def\phi   {{\mbox{$\varphi$}}}
\def\follows{\mbox{$\Leftarrow $}}
\def\EQ     {\mbox{\quad$\equiv$\quad}}
\def\eq     {\mbox{$\equiv\,$}}
\def\Land   {\mbox{ $\;\land\;$ }}
\def\half   {\mbox{$\frac{1}{2}$}}
\newtheorem{theorem}{Theorem}
\newtheorem{lemma}{Lemma}
\newtheorem{proposition}{Proposition}
\newtheorem{corollary}{Corollary}
\newtheoremstyle{examplestyle} 
            {}                   
            {}                   
            {\upshape}    
            {}                   
            {\itshape}      
            {}                   
            {0.5em}         
            {}                   
\theoremstyle{examplestyle}
\newtheorem{example}{Example}
\newtheorem{remark}{Remark}
\DeclareMathOperator{\convexhull}{cv}
\DeclareMathOperator{\arcosh}{arcosh}
\newcommand{\parm}{\makebox[1ex]{$\cdot$}}
\DeclareRobustCommand{\nscref}[1]{%
  \begingroup\@cref@sortfalse\cref{#1}\endgroup
}
\newcommand{\Rmnum}[1]{\expandafter\@slowromancap\romannumeral #1@}
\title{Conditionally complete sponges: new results on generalized lattices}
\author{Jasper J. van de Gronde \and Wim H. Hesselink}
\begin{document}
\maketitle

\setcounter{tabnr}{-1}

\begin{abstract}
Sponges were recently proposed as a generalization of lattices, focussing on joins/meets of sets, while letting go of associativity/transitivity.
In this work we provide tools for characterizing and constructing sponges on metric spaces and groups.
These are then used in a characterization of epigraph sponges: a new class of sponges on Hilbert spaces whose sets of left/right bounds are formed by the epigraph of a rotationally symmetric function.
We also show that the so-called hyperbolic sponge generalizes to more than two dimensions.
\end{abstract}


\section {Introduction}

Sponges are generalizations of lattices that were recently introduced by van de Gronde \cite{Gronde2015BSM,Gronde2015SGM,Gronde:2015:GMS}, as a possible solution to the long-standing problem of applying mathematical morphology to non-scalar data.
Morphological theory is based on lattices, but these are fairly restrictive when it comes to defining group-invariant instances on vector spaces (let alone manifolds) \cite{Birkhoff1961,Gronde2014GIC}, and are utterly incompatible with periodic spaces (assuming we wish to somehow preserve the periodicity in the lattice structure).
As a result, over the years several schemes have been suggested for morphological purposes that let go of lattices, or that try to work around the issue, while retaining something resembling a lattice's join and meet \cite{Peters1997,Zanoguera2002INS,Angulo2005MCP,Burgeth2007,Aptoula2009MPH,VelascoForero2011,Angulo2012CRP,Angulo2013SIN,Angulo2014MPU,Deborah2015SOA,Welk2015NAA}.
Unfortunately most of these schemes lack(ed) a supporting body of theory, making it hard to say much about the behaviour of the resulting filters. Sponges are meant to provide exactly such a framework, and have already been shown to encompass two schemes for vector spaces and hyperbolic spaces \cite{Zanoguera2002INS,Angulo2014MPU}, to allow the processing of angles in a natural way (without breaking the periodic nature of angles), and to support joins/meets on (hemi)spheres with a designated ``lowest'' point. Here we will provide additional examples, as well as tools to uncover further sponges.

Roughly speaking, an \emph{orientation} is a partial order without
transitivity and a \emph{sponge} is a set with an orientation that has meets
and joins for all subsets satisfying certain conditions.  The relevance of a meet or
join of a set in the absence of transitivity is due to preservation
under isometries (or other kinds of automorphisms) that permute the elements of the set.  Indeed,
long before the introduction of sponges, the meet of the inner-product
sponge was already used for this purpose \cite {H78,H79}.

As examples of sponges, van de Gronde and Roerdink \cite{Gronde:2015:GMS} present the
inner-product sponge \cite{Zanoguera2002INS}, the
one-dimensional angle sponge, and a two-dimensional hyperbolic sponge \cite{Angulo2014MPU}.  These examples are generalized and
treated in \nscref {ip_sponge,torus_sponge,hyp_sponge}, respectively.
Note that sponges on spheres and hemispheres were also given in \cite{Gronde:2015:GMS}, but these are (essentially) isomorphic to the inner-product sponge, so we do not treat these here.

In this work we first give a short overview of the main definitions concerning orientations and sponges in \cref{sec:definitions}. In \cref{ip_sponge} we briefly revisit the inner-product sponge \cite{Zanoguera2002INS,Gronde:2015:GMS}. Next, in \cref{sec:metricSponge} we derive a new result that makes it easier to identify sponges in metric spaces. In \cref{torus_sponge} we discuss sponge groups, and in \cref{sec:epigraphSponges} we introduce (and characterize) a new class of sponge (groups) called \emph{epigraph sponges}. In \cref{hyp_sponge} we generalize the hyperbolic sponge to the higher dimensional case (previously, only the 2D case was treated \cite{Angulo2014MPU,Gronde:2015:GMS}).

\section{Definitions}\label{sec:definitions}
Let $S$ be a set.  An \emph{orientation} of $S$ is a binary relation
$\preceq$ on $S$ that satisfies:
\begin{tab}
\> reflexivity: $x \preceq x$ \ for all $x\in S$, and\\
\> antisymmetry: $ x \preceq y\Land y\preceq x \implies x = y $ \ for all $x$, $y\in S$.
\end{tab}
The pair $(S, \preceq)$ of a set $S$ with an
orientation $\preceq$ is called an \emph{oriented set}.
A transitive orientation is a partial order.
If the
orientation $\preceq$ is not transitive, we may consider its
reflexive-transitive closure $\preceq^*$.  The orientation $\preceq$
is called \emph{acyclic} iff it contains no cycles, which is equivalent to $\preceq^*$ being a partial order.

If $P$ and $Q$ are subsets of an oriented set $(S, \preceq)$, we write
$P \preceq Q$ to denote that $p\preceq q$ holds for all $p\in P$ and
$q\in Q$.
A subset $P$ of $S$ is called \emph{right-bounded} iff
$P\preceq \{s\}$ for some $s\in S$; it is called \emph{left-bounded}
iff $ \{s\} \preceq P $ for some $s\in S$.\footnote{Left and right, rather than lower and upper, are used to warn the reader about the lack of transitivity.} Let the set of all right bounds
of $P$ be denoted by $R(P)$, and the set of all left bounds by $L(P)$. We abbreviate $R(\{x\})$ by $R(x)$.

Let $J(P)$ and $M(P)$ be subsets of $S$ defined by
\begin{align*}
x\in J(P) &\EQ P \preceq \{x\} \Land \left(\forall y\in S: P\preceq \{y\}
\implies x\preceq y\right) ,\\
x\in M(P) &\EQ \{x\} \preceq P \Land \left(\forall y\in S: \{y\}\preceq P
\implies y\preceq x\right).
\end{align*}
If $x$, $y\in J(P)$, then $P\preceq \{x\}$ and $P\preceq \{y\}$, and
hence $x\preceq y$ and $y\preceq x$, and therefore $x = y$ by
antisymmetry.  This proves that $J(P)$ is always empty or a singleton
set \cite{Fried1973WAL}.  A similar argument proves that $M(P)$ is always empty or a
singleton set.  If $J(P)$ or $M(P)$ has an element, its unique element
is called the \emph{join} or \emph{meet} of $P$, respectively.

A \emph{sponge} is defined to be an oriented set $(S, \preceq)$ in which every finite, nonempty, right-bounded subset has a join, and every finite, nonempty, left-bounded subset has a meet.
If the property holds for joins but not necessarily for meets, we have a join-semisponge (we can define a meet-semisponge analogously).
Note that in the original introduction of sponges, $J$ and $M$ were considered partial functions returning a particular element rather than a set of elements. Given that in an orientation $J$ and $M$ always return either the empty set or a singleton set, these views are equivalent.

Alternatively, a sponge can be defined algebraically as a set $S$ with functions $J$ and $M$, with a domain that includes (at least) all finite, nonempty subsets of $S$ and a range that includes \emph{no more} than all singleton subsets of $S$, as well as the empty set. To be a sponge, $J$ and $M$ should satisfy (with $y\in S$ and $P$ a finite, nonempty subset of $S$):
\begin{tab}
\> absorption: $\forall x\in P:M(\{x\}\cup J(P))=\{x\}$,\\
\> part preservation: $\!\begin{aligned}[t]&\left[\forall x\in P:M(\{x,y\})=\{y\}\right]\\&\!\!\implies M(P)\neq\emptyset \wedge M(M(P)\cup \{y\})=\{y\},\end{aligned}$\\
\> and the same properties with the roles of $J$ and $M$ reversed.
\end{tab}
Note that compared to the original algebraic definition \cite[\textsection4.2]{Gronde2015SGM}, absorption is now defined slightly more elegantly, and idempotence now follows from the two absorption laws:
\begin{equation*}
M(\{x\})=M(\{x\}\cup\{x\})=M(\{x\}\cup J(\{x\}\cup M(P)))=\{x\}
\end{equation*}
for any (finite) $P\supseteq\{x\}$ (the analogous statement $J(\{x\})=\{x\}$ also holds). In contrast, part preservation needs to explicitly claim that $M(P)$ is nonempty. These changes occur because the empty set behaves differently from the ``undefined'' value used in the original definition.
It has been shown \cite[\textsection4.3]{Gronde2015SGM} that the orientation-based and algebraic definitions are equivalent.

Compared to the algebraic definition of a lattice, the main difference is that we have the somewhat weaker property of part preservation rather than associativity. On the other hand, given that $J$ and $M$ operate on sets rather than being binary operators, commutativity is implied.
It should be noted that sponges are closely related to the concept of a weakly associative lattice (WAL) or trellis \cite{Skala1971TT,Fried1973NEC,Fried1973SEW}.
However, a WAL requires the join and meet to be defined for all pairs rather than all (finite and nonempty) \emph{bounded} sets.
It is known that the former by no means implies the latter \cite{Fried1973WAL}, and this makes WALs less suited for use in mathematical morphology, as this field relies heavily on the existence of joins and meets of sets. Conversely, in mathematical morphology it often suffices to guarantee the existence of joins and meets of bounded sets, again making sponges a better fit than WALs.
Often, it is convenient to be able to consider joins/meets not just over finite sets, but also infinite sets. This is part of our motivation to focus on conditionally complete sponges in the current work (the other part being that all practical examples examined so far belong to this category).

An oriented set $(S, \preceq)$ is called a \emph{conditionally complete sponge} (or \emph{cc sponge} for short) iff, for
every nonempty right-bounded subset $P$ of $ S$, the set $J(P)$ is
nonempty.
Note that, strictly speaking this is the definition of a conditionally complete join-\emph{semi}sponge, but \cref{thm:ccsponge} shows that a cc semisponge is also a cc sponge.

\begin{lemma}\label{thm:ccsponge}
  Let $(S, \preceq)$ be a cc sponge.  Let $P$ be a nonempty left-bounded
  subset of $S$.  Then $M(P)$ is nonempty.
\end{lemma}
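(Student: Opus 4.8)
The plan is to realize the meet of $P$ as the join of its set of left bounds $L(P)$, exploiting only the defining property of a cc sponge (every nonempty right-bounded set has a join). First I would check the two side-conditions needed to apply that property to $L(P)$. Since $P$ is left-bounded, there is some $s$ with $\{s\}\preceq P$, i.e.\ $s\in L(P)$, so $L(P)$ is nonempty. Since $P$ is nonempty, fix any $p\in P$; every $\ell\in L(P)$ satisfies $\ell\preceq p$ by definition of a left bound, hence $L(P)\preceq\{p\}$ and $L(P)$ is right-bounded. Therefore the cc sponge hypothesis yields that $J(L(P))$ is nonempty, and as joins are always empty or singletons, $J(L(P))=\{m\}$ for a unique $m\in S$.

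Next I would verify that $m\in M(P)$, i.e.\ both clauses of the definition of $M(P)$. For the first clause, $\{m\}\preceq P$: fix $p\in P$; as noted, $L(P)\preceq\{p\}$, so $p$ is a right bound of $L(P)$, and the defining property of the join $m=J(L(P))$ gives $m\preceq p$; since $p$ was arbitrary, $\{m\}\preceq P$. For the second clause, let $y\in S$ with $\{y\}\preceq P$; then $y\in L(P)$, and since $L(P)\preceq\{m\}$ (the first conjunct of $m\in J(L(P))$), we get $y\preceq m$. Hence $m$ satisfies both conditions, so $M(P)=\{m\}\neq\emptyset$, as required.

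I do not expect a real obstacle: this is essentially the classical fact that an infimum can be obtained as the supremum of the set of lower bounds, and it goes through despite the absence of transitivity precisely because $J$ and $M$ are defined directly in terms of $\preceq$ rather than through any transitive closure. The only point that needs care is the bookkeeping of the set-valued relation $\preceq$ together with the nonemptiness provisos — one must genuinely use that $P$ is nonempty (to bound $L(P)$ on the right) and that $P$ is left-bounded (to know $L(P)$ is nonempty) before invoking conditional completeness.
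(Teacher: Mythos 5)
Your proposal is correct and is essentially the paper's own argument: the set $L(P)$ you use is exactly the paper's $Q=\{x\mid\{x\}\preceq P\}$, and both proofs realize the meet of $P$ as the join of its left bounds, verifying the two clauses of the definition of $M(P)$ in the same way. No gaps.
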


\begin{proof}
  Define $Q = \{x \mid \{x\} \preceq P\}$, and choose some $p\in P$.
  Then $Q\preceq \{p\}$.  As $P$ is left bounded, $Q$ is nonempty.  As
  $(S, \preceq)$ is a sponge, $J(Q)$ is nonempty.  Therefore it
  suffices to prove that $J(Q) \subseteq M(P)$.  Let $x\in J(Q)$, that
  is $Q \preceq \{x\}$ and
\begin{equation*}
\forall y\in S: Q\preceq \{y\}\implies x\preceq y.
\end{equation*}
We need to prove $x\in M(P)$.  For every $y\in P$, we have $Q \preceq
\{y\}$ and hence $x\preceq y$; this proves $\{x\} \preceq P$.  Now let
$y\in S$ have $\{y\} \preceq P$.  Then $y\in Q$ and hence $y\preceq x$. 
This proves $x\in M(P)$. 
\end{proof}

\begin{example}
The set $\IR$ of the real numbers with $\le$ as orientation is a cc sponge (a cc lattice in fact),
because every nonempty bounded subset of $\IR$ has a supremum.
\end{example}

\section {The inner-product sponge} \label {ip_sponge} 

Let $E$ be a real Hilbert space.  Let
relation $\preceq$ on $E$ be defined by \cite[\textsection5.1]{Gronde:2015:GMS}
\begin{equation*}
x\preceq y \EQ (x,x) \le (x, y) .
\end{equation*}
It is clear that $x \preceq x$ always holds.

\begin{example}
  Assume $E = \IR^2$ with the standard inner product.  Consider the
  four vectors $w = (1,0)$, $x = (2,0)$, $y = (2,1)$, and $z = (1,3)$.
  Then we have $w \preceq \{x,y,z\}$, and $x \preceq y$, and
  $y \preceq z$, but $x \not \preceq z$. It follows that,
  $x\in M(\{x,y\})$ and $y\in M(\{y, z\})$, but $x\notin M(\{x,y,z\})$. 
\end{example}

\begin{lemma}
Let $ x\preceq y$ and $x \ne y$. Then $\|x\| < \|y\|$.
\end{lemma}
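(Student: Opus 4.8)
The plan is to unfold the definition of $\preceq$ and then reduce everything to a single expansion of a squared norm, so that no appeal to Cauchy--Schwarz is even needed. By definition, $x\preceq y$ means $(x,x)\le (x,y)$, which I would immediately rewrite in the equivalent form $(x,\,y-x)\ge 0$.

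Next I would expand $\|y\|^2$ by substituting $y=(y-x)+x$ and using bilinearity of the inner product:
\begin{equation*}
\|y\|^2 = \|y-x\|^2 + 2\,(y-x,\,x) + \|x\|^2 .
\end{equation*}
The cross term $2\,(y-x,\,x)$ is nonnegative by the reformulation of the hypothesis, and $\|y-x\|^2>0$ precisely because $x\neq y$. Combining these two facts yields $\|y\|^2 > \|x\|^2$, and since norms are nonnegative, taking square roots gives $\|x\|<\|y\|$.

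There is essentially no obstacle here; the only point that needs care is the \emph{strictness} of the conclusion, and this is exactly where the hypothesis $x\neq y$ is used — through $\|y-x\|\neq 0$. I would also note explicitly that the argument covers the degenerate case $x=0$ without modification (then the claim is just $\|y\|>0$, i.e.\ $y\neq 0$, which follows from $x\neq y$), and that it goes through verbatim in an arbitrary real Hilbert space.
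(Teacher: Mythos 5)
Your proof is correct, but it follows a genuinely different route from the paper. The paper's argument invokes the Cauchy--Schwarz inequality, $\|x\|^2 = (x,x) \le |(x,y)| \le \|x\|\,\|y\|$, and then needs a case analysis: the case $x=0$ is handled separately, and the equality case of Cauchy--Schwarz (when $y$ is a scalar multiple of $x$, say $y=\lambda x$ with $\lambda\ge 1$) is treated on its own to secure strictness. Your argument instead rewrites the hypothesis as $(x,\,y-x)\ge 0$ and expands
\begin{equation*}
\|y\|^2 = \|y-x\|^2 + 2\,(y-x,\,x) + \|x\|^2 \;\ge\; \|x\|^2 + \|y-x\|^2 \;>\; \|x\|^2 ,
\end{equation*}
which needs no Cauchy--Schwarz, no case split on $x=0$, and no discussion of collinearity; the strict inequality comes solely from $\|y-x\|>0$, i.e.\ from $x\ne y$, exactly as you point out. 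Your computation is more elementary and uniform, and it even yields the slightly stronger Pythagorean-type bound $\|y\|^2 \ge \|x\|^2 + \|y-x\|^2$; what the paper's version buys in exchange is mainly the geometric picture (the halfspace $R(x)$ lying outside the ball of radius $\|x\|$) that it reuses informally later, but as a proof of this lemma yours is at least as good.
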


\begin{proof}
  If $x=0$, the assertion holds trivially. We may therefore assume
  that $x \ne 0$.  Therefore $\|x\| > 0$.  By Cauchy-Schwarz, $|(x,
  y)| \le \|x\|\cdot \|y\|$ with equality if and only if $y$ is a
  multiple of $x$.  On the other hand, $\|x\|^2 = (x,x) \le |(x, y)|$
  because $x \preceq y$.  It remains to consider the case that $y$ is
  a multiple of $x$, say $y=\lambda x$.  As $\|x\| > 0$ and $(x,x)
  \le (x,y)$, this implies $\lambda \ge 1$, and hence $y=x$ or
  $\|x\| < \|y\|$. 
\end{proof} 

\begin{corollary}
Relation $\preceq$ is an acyclic orientation on $E$. 
\end{corollary}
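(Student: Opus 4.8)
The plan is to verify the two defining properties of an acyclic orientation, since reflexivity of $\preceq$ has already been observed. The only tool needed is the preceding lemma, which says that $x\preceq y$ together with $x\ne y$ forces $\|x\|<\|y\|$; in particular $x\preceq y$ always implies $\|x\|\le\|y\|$.

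For antisymmetry, I would suppose $x\preceq y$ and $y\preceq x$ with $x\ne y$; the lemma then yields $\|x\|<\|y\|$ and $\|y\|<\|x\|$ at once, which is impossible, so $x=y$. Hence $\preceq$ is an orientation.

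For acyclicity, I would introduce the relation $R$ on $E$ defined by: $x\mathrel{R}y$ iff $x=y$ or $\|x\|<\|y\|$, and check (routinely) that $R$ is reflexive, antisymmetric and transitive, i.e.\ a partial order on $E$. The lemma states precisely that $\preceq\,\subseteq\,R$. Since $R$ is already reflexive and transitive, the reflexive-transitive closure $\preceq^{*}$ is also contained in $R$, and therefore inherits antisymmetry from $R$; being reflexive and transitive by construction, $\preceq^{*}$ is thus a partial order, which by the definition recalled in \cref{sec:definitions} means exactly that $\preceq$ is acyclic.

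There is no real obstacle here: everything reduces to isolating the monotonicity of the norm along $\preceq$-chains, which the lemma already packages. If one prefers to avoid introducing $R$, the same conclusion follows by chaining $\|x_0\|\le\|x_1\|\le\cdots\le\|x_n\|=\|x_0\|$ around a hypothetical cycle $x_0\preceq x_1\preceq\cdots\preceq x_n=x_0$ to force all the norms equal, whence every step is an equality by the strict part of the lemma.
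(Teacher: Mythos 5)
Your proof is correct and follows the same route the paper intends: the corollary is stated without proof precisely because it is immediate from the preceding lemma, with antisymmetry and the impossibility of nontrivial cycles both coming from the strict increase of the norm along $\preceq$. Your embedding of $\preceq$ into the partial order $R$ (or equivalently the norm-chaining argument around a hypothetical cycle) is exactly this observation made explicit.
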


\begin{theorem}
The pair $(E, \preceq)$ is a cc sponge with a least element. 
\end{theorem}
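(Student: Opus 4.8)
The plan is to reduce everything to the Hilbert projection theorem. Reflexivity and antisymmetry of $\preceq$ are already available from the preceding corollary, and the ``least element'' claim is immediate: since $(0,0)=0=(0,y)$ for every $y\in E$, we have $0\preceq y$ always, so $0$ is a least element (and, by antisymmetry, the only one). It therefore remains to check the conditional-completeness condition, namely that $J(P)\neq\emptyset$ for every nonempty right-bounded $P\subseteq E$; once that is done, \cref{thm:ccsponge} automatically upgrades this cc join-semisponge to a cc sponge, giving the meet side for free.

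The key observation is a geometric reformulation of the order. We have $x\preceq y$ iff $(x,y-x)\ge 0$, and $x\in R(P)$ (that is, $P\preceq\{x\}$) iff $(p,x)\ge (p,p)$ for every $p\in P$. Hence
\[
  R(P)=\bigcap_{p\in P}\{x\in E : (p,x)\ge (p,p)\},
\]
an intersection of closed affine half-spaces, so $R(P)$ is closed and convex, and it is nonempty exactly because $P$ is right-bounded. Let $x^{\ast}$ be the point of $R(P)$ of minimal norm, i.e.\ the metric projection of $0$ onto the nonempty closed convex set $R(P)$, which exists and is unique by the Hilbert projection theorem. The variational characterization of that projection says $(0-x^{\ast},\,y-x^{\ast})\le 0$ for all $y\in R(P)$, i.e.\ $(x^{\ast},x^{\ast})\le (x^{\ast},y)$, which is precisely $x^{\ast}\preceq y$. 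Since $x^{\ast}\in R(P)$ also gives $P\preceq\{x^{\ast}\}$, we conclude $x^{\ast}\in J(P)$, so $J(P)\neq\emptyset$.

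The only non-formal point — and the step I expect to be the real obstacle — is the existence of the minimizer $x^{\ast}$: a general nonempty closed convex subset of an inner-product space need not contain a point of least norm, and this is exactly where completeness of $E$ (its being a genuine Hilbert space rather than a mere pre-Hilbert space) enters, through the parallelogram-law argument underlying the projection theorem. Everything else is bookkeeping: translating the projection's variational inequality into the definitions of $\preceq$ and $J$ is a one-line check, and invoking \cref{thm:ccsponge} finishes the proof.
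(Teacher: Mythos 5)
Your proof is correct, but it is the mirror image of the paper's argument rather than a reproduction of it. The paper first notes that $0$ is a least element, so every nonempty subset is left-bounded, and then constructs \emph{meets} of arbitrary nonempty subsets: apart from the degenerate case where $0$ is the only left bound, the meet of $P$ is the point of $\convexhull(P)$ nearest the origin, where $\convexhull(P)$ enters as the intersection of all closed halfspaces containing $P$; the variational inequality gives $\{z\}\preceq P$, and the inclusion $\convexhull(P)\subseteq R(y)$ for any left bound $y$ gives minimality. You instead attack the join condition in the definition of a cc sponge directly, projecting $0$ onto the set of right bounds $R(P)=\bigcap_{p\in P}\{x\in E:(p,x)\ge(p,p)\}$, which is closed and convex for free; the variational inequality $(x^{\ast},y-x^{\ast})\ge 0$ then says precisely $x^{\ast}\preceq y$ for every right bound $y$, so $x^{\ast}\in J(P)$, and \cref{thm:ccsponge} supplies the meets. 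Both proofs rest on the same tool, the Hilbert projection theorem (and you correctly identify completeness of $E$ as the only place where anything nontrivial is used), but your decomposition avoids the closed convex hull, the case distinction on whether a nonzero left bound exists, and the implicit appeal to the dual of \cref{thm:ccsponge} that the paper needs in order to pass from meets of left-bounded sets back to joins of right-bounded ones. What the paper's route buys in exchange is the explicit geometric description of the meet of an \emph{arbitrary} nonempty subset as the nearest point of $\convexhull(P)$ to the origin, a fact it reuses in its closing discussion of the geometry of the sponge; your argument recovers the existence of such meets only indirectly, via \cref{thm:ccsponge} and the least element.
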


\begin{proof}
It can be verified that $0$ is less than (or equal to) every element in $E$.
It therefore suffices to show that every nonempty subset of $E$ has a meet.
Let $P$ be a nonempty subset of $E$. It suffices to show that $P$ has
a meet.  We have 
\begin{equation*}
\{x\} \preceq P \EQ P \subseteq R(x),
.\end{equation*}
If $x = 0$ then $R(x)
= E$.  In all other cases, $R(x)$ is the closed halfspace $\{ y \mid (x,x) \le (x, y) \}$.  The
intersection of \emph{all} closed halfspaces that contain $P$ is the closed
convex hull $\convexhull(P)$ of $P$, i.e., the topological closure of the convex hull
of $P$.

We distinguish two cases.  First, assume there is no $x\ne 0$ with $\{x\}
\preceq P$. Then it is easily seen that $0$ is the meet of $P$. 
Otherwise, there exists $x\ne 0$ with $\{x\} \preceq P$.  Then all
elements of $\convexhull(P)$ are farther from the origin then $x$.  As
$\convexhull(P)$ is closed, convex, and nonempty in the Hilbert space $E$, there is a
unique point $z\in \convexhull(P)$ with smallest distance $\|z\|$ to the
origin.  We claim that $z$ is the meet of $P$.

We first prove $\{z\} \preceq P$. Indeed, for any $p\in P$, the line
segment between $z$ and $p$ is contained in $\convexhull(P)$; therefore all
its points have a distance to the origin $ \ge \|z\|$; therefore the
angle between the vectors $p-z$ and $0-z$ is not sharp, i.e. $(p-z,
0-z) \le 0$, and hence $(z,z) \le (z,p)$, i.e. $z \preceq p$.

It remains to observe that, for any vector $y$ with $\{y\} \preceq P$,
we have $z\in \convexhull(P) \subseteq R(y)$, so that $z\preceq y$.  This
proves that $z$ is the meet of $P$. 
\end{proof}

We remark that although every nonempty subset of $E$ has a meet, not every nonempty subset also has a join. In particular, the subset needs to be contained in a ball with the origin on its boundary to even be right bounded. It is, however, possible to extend $E$ with an extra element so that every nonempty set is right bounded \cite[\textsection5.3]{Gronde:2015:GMS}.

\section {A sponge in a complete metric space}\label{sec:metricSponge}

We consider a \emph{topological orientation} to be a topological space $S$ with an orientation, such that the orientation relation is a closed subset of the product space $S\times S$. In other words, if $\lim_{n\rightarrow\infty}x_n=x$ and $\lim_{n\rightarrow\infty}y_n=y$, and $x_n\preceq y_n$ for all $n\in\mathbb{N}$, then $x\preceq y$. This is  in line with the concept of a topological lattice used by Birkhoff \cite[\textsection\Rmnum{10}.11]{Birkhoff1995}, but slightly stricter than the analogous concept of a partially ordered topological space considered by Ward \cite{Ward1954POT} (who only requires sets of left and right bounds to be closed). Note that Birkhoff shows that the weaker concept is equivalent to the stronger concept in complete lattices; \cref{thm:closedBoundsClosedRelation} below shows that at least in some cases something similar holds for orientations as well. 

Let $S$ be a complete metric space with distance function $d$.
Let $\preceq$ be a topological orientation on $S$.
Let a function $h: S \to \IR$ be called a \emph{discriminator} iff
\begin{equation*}
\forall\eps>0\;\exists\delta>0\;\forall x, y\in S:   x\preceq y \Land h(y) < h(x) + \delta \implies d(x,y) < \eps .
\end{equation*}
This condition implies that $h$ is strictly monotonic, in the sense
that $x\preceq y \Land x\ne y $ implies $h(x) < h(y) $.
The following theorem shows how in a complete metric space cc sponges can be characterized by the existence of meets of all left-bounded \emph{pairs} rather than all left-bounded nonempty sets. This is similar in spirit to what Birkhoff has shown for lattices \cite[\textsection\Rmnum{10}.10~Thm.~16]{Birkhoff1995}.

\begin{theorem} \label {metric2sponge} 
  Assume that $S$ is a complete metric space, that $(S,\preceq)$ is a topological orientation, that every left-bounded
  pair in $S$ has a meet, and that $h:S\rightarrow\IR$ is continuous and a discriminator.
  Then $(S, \preceq)$ is a cc sponge.
\end{theorem}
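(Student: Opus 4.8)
The plan is to prove that every nonempty right-bounded subset $P$ of $S$ has a join, by first handling finite sets via induction using the hypothesis on pairs, and then passing to the general case by a limiting argument using completeness of $S$ and continuity of the discriminator $h$. (By \cref{thm:ccsponge} it would in fact suffice to produce meets of left-bounded sets, and the situation is symmetric, so I will argue for joins of right-bounded sets; one could equally well phrase everything in terms of meets.)

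First I would establish the finite case. Given a finite right-bounded set $P = \{p_1,\dots,p_n\}$, I want to form its join by iterating the binary operation. The subtlety is that, without associativity, a naive fold $J(\{J(\{p_1,p_2\}),p_3\})$ need not give the join of the whole set, and I must check at each stage that the intermediate result is still a right bound of the accumulated elements so that the next binary join is defined. Here I would lean on the dual of \emph{part preservation}: if $s$ is a common right bound of $p_1,\dots,p_k$ and $j$ is a right bound of all these that is itself $\preceq$ every common right bound, then the hypotheses let me combine $j$ with $p_{k+1}$; part preservation is exactly the axiom guaranteeing that the combined element is again dominated by every common right bound of the enlarged set, so the induction goes through and yields $J(P)$. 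This part is essentially the standard argument that a weakly-associative structure with bounded binary joins has all finite bounded joins, and I expect it to be routine.

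Next comes the passage to arbitrary nonempty right-bounded $P$. Fix a right bound $s$ of $P$. For each finite nonempty $F \subseteq P$, the finite case gives $j_F := J(F)$, and since $s$ is a right bound of $F$ we get $j_F \preceq s$, so by monotonicity $h(j_F) \le h(s)$. Moreover, enlarging $F$ can only increase $h(j_F)$: if $F \subseteq F'$ then $j_{F'}$ is a right bound of $F$, hence $j_F \preceq j_{F'}$ (by the universal property of $j_F = J(F)$), so $h(j_F) \le h(j_{F'})$. Therefore the net $\bigl(h(j_F)\bigr)_F$, indexed by finite subsets of $P$ ordered by inclusion, is monotone and bounded above by $h(s)$, so it has a supremum $\beta \le h(s)$. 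Pick an increasing sequence of finite sets $F_1 \subseteq F_2 \subseteq \cdots$ with $h(j_{F_k}) \to \beta$; I may assume the $F_k$ eventually contain any prescribed element of $P$ by interleaving. The key claim is that $(j_{F_k})_k$ is Cauchy: for $m \ge k$ we have $j_{F_k} \preceq j_{F_m}$ and $h(j_{F_m}) < h(j_{F_k}) + \delta$ once $k$ is large (both $h$-values being within $\eps'$ of $\beta$), so the discriminator property forces $d(j_{F_k}, j_{F_m}) < \eps$. By completeness, $j_{F_k} \to z$ for some $z \in S$.

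Finally I would verify that $z = J(P)$. For $z \preceq s$: each $j_{F_k} \preceq s$ and $\preceq$ is closed, so the limit satisfies $z \preceq s$; more importantly I must show $\{p\} \preceq \{z\}$ for every $p \in P$ — arranging the $F_k$ so that $p \in F_k$ for large $k$, we have $p \preceq j_{F_k}$ for those $k$, and closedness of $\preceq$ gives $p \preceq z$, i.e.\ $P \preceq \{z\}$. For the universal property, let $y$ be any right bound of $P$; then $y$ is a right bound of each finite $F_k$, so $j_{F_k} \preceq y$, and again closedness gives $z \preceq y$. Hence $z \in J(P)$, and $(S,\preceq)$ is a cc (semi)sponge; invoking \cref{thm:ccsponge} upgrades this to a cc sponge.

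The main obstacle, and the only place where all four hypotheses genuinely pull together, is the Cauchy claim for $(j_{F_k})$: one needs the discriminator inequality to be fed precisely the comparability $j_{F_k} \preceq j_{F_m}$ (from the universal property of finite joins) together with the $h$-values being squeezed near $\beta$ (from monotonicity and boundedness), and only then does completeness deliver the limit. A secondary point to be careful about is that a single sequence of finite sets suffices rather than a full net — this works because the limit object $z$ is pinned down by the two defining conditions of $J(P)$, which are tested against individual elements $p \in P$ and individual right bounds $y$, so no cofinality issue arises.
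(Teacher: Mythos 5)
Your first stage is where the proposal breaks down. The theorem assumes meets of left-bounded \emph{pairs}; it does not assume joins of right-bounded pairs, so the binary operation you propose to iterate over the elements of $P$ is not available in the direction you chose (producing even one binary join already requires the full completeness/discriminator argument, applied to the set of right bounds of that pair). Even if you dualize everything and try to build meets of finite left-bounded sets from meets of pairs, the induction fails: from $m_2 \in M(\{p_1,p_2\})$ and $m_3 \in M(\{m_2,p_3\})$ you get $m_3 \preceq m_2 \preceq p_1$, but without transitivity this does not give $m_3 \preceq p_1$. You appeal to ``part preservation'' to bridge exactly this gap, but part preservation is one of the sponge axioms, i.e.\ part of what is being proved; it is not a consequence of ``every left-bounded pair has a meet'' in a bare orientation, and assuming it makes the finite case circular. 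Indeed the paper explicitly recalls (citing Fried) that existence of all binary joins/meets does \emph{not} imply existence of joins/meets of finite bounded sets in weakly associative structures, so the ``standard, routine'' finite-set argument you invoke does not exist; in this theorem the finite case is not a stepping stone but falls out of the same metric argument as the infinite case.

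The paper's proof avoids finite sets altogether: for an arbitrary nonempty right-bounded $P$ it works inside $Q=\{x \mid P \preceq \{x\}\}$, the set of all right bounds of $P$. Pairs in $Q$ are left-bounded by any element of $P$, so the hypothesis gives their meets, which lie again in $Q$; choosing a sequence $(q_n)$ in $Q$ with $h(q_n)$ tending to $\inf_Q h$, the discriminator applied to $q_i$, $q_j$ and their meet $z_{ij}\in Q$ yields the Cauchy property, completeness and closedness of $\preceq$ give a limit $r\in Q$ with $h(r)=\inf_Q h$, and pairwise meets plus strict monotonicity of $h$ show $r \preceq q$ for all $q\in Q$, i.e.\ $r$ is the join of $P$. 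Your second stage is close in spirit to this, but it has its own gap: for uncountable $P$ a single increasing sequence of finite sets cannot ``eventually contain any prescribed element of $P$'', and your closing remark that no cofinality issue arises is not an argument — you would need to show the limit is independent of the chosen sequence (e.g.\ by comparing $F_k$ with $F_k\cup\{p\}$ and using the discriminator to see the two limits coincide), or, better, replace the net of finite joins by the set $Q$ above, where the pairwise meets you need are exactly what the hypothesis provides.
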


\begin{proof}
  Let $P$ be a nonempty right-bounded subset of $S$.  It suffices to
  prove that $P$ has a join.  Write $Q = \{x\mid P \preceq \{x\}\}$.
  As $P$ is right-bounded, $Q$ is nonempty.  For every $p\in P$,
  $q\in Q$, we have $h(p) \le h(q)$.  Every pair of elements of $Q$
  is left-bounded (by an element of $P$), and therefore has a meet,
  which is easily seen to be in $Q$. 

  Let $H$ be the infimum of $h(q)$ over all $q\in Q$, and $(q_n)_{n\in \Nat}$ an infinite
  sequence in $Q$ with $\lim _{n\to\infty} h(q_n) = H$.
  We first prove that this sequence
  is a Cauchy sequence.  Let $\eps > 0$ be given. 
  As $h$ is a discriminator, there is a number $\delta>0$ such that, for all $x,y\in Q$
  with $x\preceq y$ and $h(y) < h(x)+\delta$, $d(x,y)<\half\eps$.
  As $\lim _{n\to\infty} h(q_n) = H$, there
  is a number $m$ such that $h(q_n) < H + \delta$ for all $n \ge m$.
  For indices $i$, $j \ge m$, the pair $\{q_i, q_j\}$ in $Q$ has a
  meet $z_{ij}\in Q$.  Therefore, $H \le h(z_{ij})$.
  It follows that both $ h(q_i) $ and $h(q_j)$ are less than
  $h(z_{ij}) + \delta$.  As $\{ z_{ij} \} \preceq \{q_i, q_j\}$, this
  implies that $ d(q_i, q_j) \le d(q_i, z_{ij} ) + d(z_{ij}, q_j) < \eps $.
  This proves that $(q_n)_{n\in \Nat}$ is a Cauchy sequence.

  Because $S$ is a complete metric space, the Cauchy sequence has a
  limit, say $r$.  As function $h$ is continuous, $h(r) = H $.  On the
  other hand, $r\in Q$ holds because relation $\preceq$ is
  topologically closed.  For every $q\in Q$, the pair $q$, $r$ has a
  meet $z\in Q$, with $ h(r) = H \le h(z) $.  As $h$ is strictly
  monotonic and $z \preceq r$, it follows that $r = z \preceq q$. This
  proves $\{r \} \preceq Q$, and hence that $r$ is the join of $P$.
\end{proof}

Note that the above proof implies that $r$ in no way depends on the precise choice of the sequence $(q_n)_{n\in \Nat}$.
Also, it should be clear that we could just as easily have shown the dual statement, so for completeness:
\begin{corollary} \label {metric2spongeDual} 
  Assume that $S$ is a complete metric space, that $(S,\preceq)$ is a topological orientation, that every right-bounded
  pair in $S$ has a join, and that $h:S\rightarrow\IR$ is continuous and a discriminator.
  Then $(S, \preceq)$ is a cc sponge.
\end{corollary}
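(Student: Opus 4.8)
The plan is to deduce this from \cref{metric2sponge} by passing to the reversed orientation. Define a relation $\preceq'$ on $S$ by $x \preceq' y \EQ y \preceq x$. The first step is to check that $(S, \preceq')$ is again a topological orientation: reflexivity and antisymmetry of $\preceq'$ are immediate from those of $\preceq$, and the set $\{(x,y)\mid x\preceq' y\}$ is the image of the closed set $\{(x,y)\mid x\preceq y\}$ under the homeomorphism $(x,y)\mapsto (y,x)$ of $S\times S$, hence closed. So $(S,\preceq')$ inherits the topological-orientation structure, on the same complete metric space $S$.

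The second step is to transfer the two remaining hypotheses. A pair $\{a,b\}$ is left-bounded with respect to $\preceq'$ exactly when it is right-bounded with respect to $\preceq$, and in that case its meet with respect to $\preceq'$ coincides with its join with respect to $\preceq$; thus the assumption that every right-bounded pair of $(S,\preceq)$ has a join says precisely that every left-bounded pair of $(S,\preceq')$ has a meet. For the discriminator, the one point requiring care is that the discriminator condition is not symmetric in the direction of the $h$-inequality relative to the orientation, so one must use $-h$ rather than $h$: writing out the defining implication ``$x\preceq' y \Land (-h)(y) < (-h)(x) + \delta \implies d(x,y) < \eps$'' and renaming the bound variables turns it back into exactly the defining implication for $\preceq$ and $h$. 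Since $-h$ is also continuous, $-h$ is a continuous discriminator for $(S,\preceq')$.

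The third step is then routine: \cref{metric2sponge} applies verbatim to $(S,\preceq')$ with the discriminator $-h$, so $(S,\preceq')$ is a cc sponge. Finally, invoking \cref{thm:ccsponge} for $(S,\preceq')$ shows that every nonempty left-bounded subset of $(S,\preceq')$ has a meet; translating back through the definition of $\preceq'$, every nonempty right-bounded subset of $(S,\preceq)$ has a join, which is exactly the assertion that $(S,\preceq)$ is a cc sponge.

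I do not anticipate a real obstacle: the entire argument is duality bookkeeping, and the only subtlety worth flagging is the sign flip $h\mapsto -h$, which is forced because the defining implication of a discriminator ties the orientation direction to one particular side of the $h$-inequality.
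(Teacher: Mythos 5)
Your proposal is correct and matches the paper's intent: the paper simply asserts the corollary as the dual of \cref{metric2sponge}, and your argument is exactly a formalization of that duality, applying the theorem to the reversed orientation with the (correctly sign-flipped) discriminator $-h$ and then using \cref{thm:ccsponge} to translate the join-based definition of a cc sponge back to the original orientation. No gaps; the sign flip and the final appeal to \cref{thm:ccsponge} are precisely the two points that needed care.
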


\begin{corollary} \label {meetIRsponge}
  Let $\preceq$ be a topological orientation on $\IR$, which
  implies $\le$.  Assume every left-bounded pair for $\preceq$ has a
  meet in $\IR$, and that the distance function is given by $d(x,y)=|x-y|$.  Then $(\IR, \preceq)$ is a cc sponge.
\end{corollary}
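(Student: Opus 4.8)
The plan is to obtain this as an immediate instance of \cref{metric2sponge}. The space $\IR$ equipped with $d(x,y)=|x-y|$ is a complete metric space, $(\IR,\preceq)$ is a topological orientation by hypothesis, and every left-bounded pair has a meet, again by hypothesis; so the only ingredient of \cref{metric2sponge} still to be supplied is a continuous discriminator $h:\IR\to\IR$. I would simply take $h$ to be the identity map $h(x)=x$, which is trivially continuous.

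To check that the identity is a discriminator, I would argue as follows. Given $\eps>0$, choose $\delta=\eps$. Suppose $x\preceq y$ and $h(y)<h(x)+\delta$. The latter gives $y<x+\eps$; on the other hand, since $\preceq$ implies $\le$, the relation $x\preceq y$ gives $x\le y$, hence $y-x\ge 0$. Combining, $0\le y-x<\eps$, so $d(x,y)=|x-y|=y-x<\eps$, which is exactly the discriminator condition. (The strict monotonicity of $h$ that a discriminator entails is likewise immediate from $\preceq\,\subseteq\,\le$ together with antisymmetry of $\preceq$, so nothing extra needs to be verified there.)

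Having confirmed all hypotheses of \cref{metric2sponge}, that theorem directly yields that $(\IR,\preceq)$ is a cc sponge, which completes the proof. I do not expect any genuine obstacle: the entire content of the corollary is the observation that the assumption ``$\preceq$ implies $\le$'' is precisely what makes the natural height function $x\mapsto x$ a discriminator, so the statement is essentially \cref{metric2sponge} specialized to the one-dimensional Euclidean line.
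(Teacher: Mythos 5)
Your proof is correct and follows the paper's own route exactly: both apply \cref{metric2sponge} with $S:=\IR$ and $h$ the identity, using the hypothesis that $\preceq$ implies $\le$ to verify (with $\delta=\eps$) that the identity is a continuous discriminator. Your write-up just spells out the discriminator check in slightly more detail than the paper does.
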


\begin{proof}
  \Cref {metric2sponge} is applied to $S := \IR$ with for $h$
  the identity function. 
  It is clear that $h$ is continuous. It is a discriminator because $x\le y<x+\eps$ implies $d(x,y)<\eps$.
\end{proof}

\section {Sponge groups}  \label {torus_sponge}

In this section, we investigate the possibility to combine the
structures of sponges and groups in a useful manner. Although it is
difficult to give interesting examples, we begin with not necessarily
commutative groups. In such a group, the group operation is denoted by
$\cdot$, and the neutral element by $\mathbf{1}$.
Note that some of the results shown here have been shown earlier for weakly associative lattices by Rach\r{u}nek \cite{Rachunek1979SOG}.\footnote{Rach\r{u}nek referred to orientations as semi-orders; unfortunately, this term is also used for other concepts.}

An \emph{oriented group} $G$ is defined to be a group 
with an orientation $\preceq$, such that
\begin{equation} \label {add_orient} 
\forall x,y,z\in G:\quad x \preceq y \implies x\cdot z \preceq y\cdot z \Land z\cdot x
  \preceq z\cdot y
.\end{equation}
It is called a \emph{cc sponge group} iff moreover $(G, \preceq)$ is a cc sponge.
Note that inversion in the group also reverses the order: $x\preceq y\EQ y^{-1}\cdot x\preceq \mathbf{1}\EQ y^{-1}\preceq x^{-1}$.

In an oriented group $(G, \preceq)$, with unit element $\mathbf{1}$, the
\emph{positive cone} is the subset $C$ of $G$ of the elements $x\in G$
with $\mathbf{1} \preceq x$.  It is easy to see that this set satisfies
\begin{equation} \label {cone_preceq}
\begin{aligned}
&C\cap C^{-1} = \{\mathbf{1}\},\\
&\forall x \in G, y \in C: x\cdot y \cdot x^{-1} \in C.
\end{aligned}
\end{equation}
The second condition says that $C$ is invariant under conjugation. 

Conversely, if $G$ is a group with a subset $C$ that satisfies the properties in \cref {cone_preceq}, then one can define the orientation $\preceq$ on $G$ by 
\begin{equation*}
x \preceq y \EQ x^{-1}\cdot y \in C .
\end{equation*}
This makes $(G, \preceq)$ an oriented group.  Indeed, relation
$\preceq$ is reflexive because $\mathbf{1} \in C$.  It is antisymmetric
because, if $ x\preceq y$ and $y\preceq x$, then $x^{-1}\cdot y\in
C\cap C^{-1} = \{\mathbf{1}\}$, so that $x = y$.  \cref {add_orient}
is easily seen to hold.  This proves that $(G, \preceq)$ is an
oriented group.  The orientation is a partial order if and only if
$C\cdot C \subseteq C$.

\begin{example}
Let $G = \mathrm{GL}_n(\IR)$, the group of the invertible real $n\times n$
matrices.  Let $C$ be the set of diagonalizable matrices with all
eigenvalues real and $\ge 1$.  The set $C$ satisfies the properties in \cref {cone_preceq}. It therefore induces an orientation $\preceq$
that makes $(G, \preceq)$ an oriented group. 

If $n > 1$, then $(G, \preceq)$ is not a sponge.  For $n = 2$, this
is shown as follows.  Assume that it is a sponge, and consider the
elements
\[ h(u) = \left ( \begin{array}{cc} 1 & u \\0 & 1 \end{array} \right) 
\qquad g(t) = \left ( \begin{array}{cc} 1 & 0 \\0 & t \end{array} \right) 
\]
For $ 1 < t$, we have $h(u) \preceq g(t) $ because
$h(u)^{-1} \cdot g(t)\in C$.  As $(G, \preceq)$ is a sponge, the pair
$h(0)$ and $h(1) $ has a join, say $k$.  For all $t > 1$, we have
$ \mathbf{1} = h(0) \preceq k \preceq g(t)$.  This implies that $k\in C$ and
$\det (k) = 1$.  The identity is the only element of $C$ with
determinant 1.  This proves that $k = \mathbf{1}$.  This implies
$h(1) \preceq \mathbf{1}$, a contradiction.  \qed
\end{example}

\begin{remark}
  At first glance, there do not appear to be many (interesting) noncommutative sponge groups.  The best
  one we found is the group $G$ of the real matrices
  \[ g(s,t,u) = \left( \begin{array}{cc} s & u \\0 & t \end{array}
  \right) \]
  with $s$, $t > 0$.  Let $C$ be the subset containing the matrices $g(1,1,u)$
  with $ u\ge 0$.  Then $(G, \preceq)$ is a sponge: a nonempty subset $V$ of
  $G$ is left bounded iff there are $s,t>0$ and $a\in\IR$ with $V\subseteq\{g(s,t,u)\mid a\le u\}$. Its meet is $g(s,t,b)$ for $b=\inf\{u\mid g(s,t,u)\in V\}$. This, however, is essentially just an additive
  sponge group embedded in a noncommutative group that preserves its
  orientation.
\end{remark}

\begin{lemma}\label{thm:closedBoundsClosedRelation}
Assume that $G$ is a topological group and that $(G,\preceq)$ is an oriented group (but not necessarily a topological orientation). Then, the positive cone $C$ is closed if and only if the relation $\preceq$ is closed.
\end{lemma}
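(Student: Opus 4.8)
The plan is to prove both implications directly from the definition of a closed set in a topological space, exploiting the fact that the orientation relation and the positive cone are related by the continuous-ish map $(x,y)\mapsto x^{-1}\cdot y$. Concretely, define $\phi : G\times G\to G$ by $\phi(x,y)=x^{-1}\cdot y$. Since $G$ is a topological group, the multiplication and inversion maps are continuous, so $\phi$ is continuous. By the defining relation $x\preceq y \iff x^{-1}\cdot y\in C$, we have that $\preceq$, viewed as a subset of $G\times G$, is exactly $\phi^{-1}(C)$. Hence if $C$ is closed, then $\preceq=\phi^{-1}(C)$ is closed by continuity of $\phi$, giving one direction immediately.

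For the converse, suppose $\preceq$ is closed in $G\times G$. I would exhibit $C$ as the preimage of $\preceq$ under a continuous map, namely $\psi : G\to G\times G$ defined by $\psi(x)=(\mathbf{1},x)$. This $\psi$ is continuous (it is just the inclusion of a slice), and $\psi^{-1}(\preceq)=\{x\mid \mathbf{1}\preceq x\}=C$. Therefore $C$ is closed as the preimage of a closed set under a continuous map. This completes the equivalence. Both directions are thus reduced to the single observation that $C$ and $\preceq$ are related by continuous pullback in either direction.

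A small point worth making explicit in the write-up: one should confirm that $\phi$ really is continuous, which uses that $G$ is a topological group in the standard sense (multiplication $G\times G\to G$ and inversion $G\to G$ both continuous); this gives continuity of $(x,y)\mapsto x^{-1}$ composed appropriately with multiplication. The map $\psi$ needs no group structure at all, only that $G$ is a topological space, so that half is essentially trivial. It is also worth noting that this lemma does not require $(G,\preceq)$ to be a topological orientation at the outset — indeed the lemma's content is precisely that, for oriented groups, closedness of $C$ is what makes $(G,\preceq)$ a topological orientation.

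I do not anticipate a real obstacle here; the only thing to be careful about is not conflating "closed in $G$" with "closed in $G\times G$" and keeping track of which space each set lives in. The core idea — translate between $C\subseteq G$ and ${\preceq}\subseteq G\times G$ via the continuous maps $\phi(x,y)=x^{-1}\cdot y$ and $\psi(x)=(\mathbf{1},x)$ — handles everything, and no separating-hyperplane or net/filter arguments are needed, in contrast to the completeness-based arguments used for \cref{metric2sponge}.
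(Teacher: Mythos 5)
Your proposal is correct and follows essentially the same route as the paper: the paper also writes $\preceq$ as the preimage of $C$ under the continuous map $(x,y)\mapsto x^{-1}\cdot y$ and, conversely, $C$ as the preimage of $\preceq$ under $x\mapsto(\mathbf{1},x)$, concluding both directions by the closedness of preimages of closed sets under continuous maps. The only implicit step, identifying $x\preceq y$ with $x^{-1}\cdot y\in C$ via translation invariance of the oriented group, is used in the same way in both arguments.
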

\begin{proof}
If a function $g$ is continuous, the preimage of a closed set under $g$ is closed as well.
Now, note that $\preceq$ can be identified with the set $\{(x,y)\in G^2\mid x^{-1}\cdot y\in C\}$, the preimage of $C$ under the function $g_1(x,y)=x^{-1}\cdot y$. Owing to $G$ being a topological group, $g_1$ is continuous, and $\preceq$ is closed if $C$ is closed.
Next, note that $C=\{x\in G\mid \mathbf{1}\preceq x\}=\{x\in G\mid (\mathbf{1},x)\in{\preceq}\}$, the preimage of $\preceq$ under the continuous function $g_2(x)=(\mathbf{1},x)$. We thus also have that $C$ is closed if $\preceq$ is closed. This concludes the proof.
\end{proof}

\subsection {Refining the orientation}

For an oriented group $(G, \preceq)$ with a subset $C$, consider the
condition
\begin{equation} \label {defcone2} 
  y \in C \Land \mathbf{1} \preceq x \preceq y \implies x\in C \Land
  x^{-1}\cdot y \in C\text{ \ for all $y$, $x\in G$.}
\end{equation}

\begin{lemma} \label {restrSponge} Let $(G, \preceq)$ be a cc sponge
  group.  Let $C$ be a subset of $G$, invariant under conjugation,
  with $\mathbf{1} \in C$ and $\{\mathbf{1}\}\preceq C$.  Assume that \cref {defcone2} holds.  Let $\sqleq$ be the relation on $G$
  defined by $x\sqleq y\equiv x^{-1}\cdot y\in C$.  Then $(G, \sqleq)$
  is a cc sponge group.
\end{lemma}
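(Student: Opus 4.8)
The plan is to avoid any new completeness or limiting argument and instead show that the $\preceq$-join of a $\sqleq$-right-bounded set is already its $\sqleq$-join; all the work is in transporting \cref{defcone2} to an ``interval'' property relating the two orientations.

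First I would check that $C$ satisfies the two conditions of \cref{cone_preceq}, so that $\sqleq$ is a genuine orientation and $(G,\sqleq)$ an oriented group (by the construction recalled earlier in this section). Invariance of $C$ under conjugation is assumed; for $C\cap C^{-1}=\{\mathbf{1}\}$ note $\mathbf{1}\in C$, and if $y\in C\cap C^{-1}$ then $\{\mathbf{1}\}\preceq C$ gives $\mathbf{1}\preceq y$ and $\mathbf{1}\preceq y^{-1}$, and left-multiplying the latter by $y$ yields $y\preceq\mathbf{1}$, so $y=\mathbf{1}$ by antisymmetry of $\preceq$. I would also record the simple fact that $\sqleq$ \emph{refines} $\preceq$: if $x^{-1}\cdot y\in C$, then $\mathbf{1}\preceq x^{-1}\cdot y$ by $\{\mathbf{1}\}\preceq C$, and left-multiplying by $x$ (via \cref{add_orient}) gives $x\preceq y$.

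The crucial step is to rephrase \cref{defcone2}, using translation invariance, as the interval property
\[ a\sqleq b\ \Land\ a\preceq z\preceq b\ \implies\ a\sqleq z\ \Land\ z\sqleq b \qquad (a,b,z\in G). \]
I would obtain this by left-multiplying everything by $a^{-1}$: with $b'=a^{-1}\cdot b$ and $z'=a^{-1}\cdot z$, the hypotheses become $b'\in C$ and $\mathbf{1}\preceq z'\preceq b'$, so \cref{defcone2} delivers $z'\in C$ and $z'^{-1}\cdot b'\in C$; since $z'^{-1}\cdot b' = z^{-1}\cdot b$, undoing the translation gives exactly $a\sqleq z$ and $z\sqleq b$.

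Finally, let $P$ be a nonempty $\sqleq$-right-bounded subset, say $P\sqleq\{s\}$. By the refinement remark $P$ is $\preceq$-right-bounded, so the cc sponge group $(G,\preceq)$ supplies the $\preceq$-join $j$ of $P$. I would then verify that $j$ is also the $\sqleq$-join of $P$, applying the interval property twice: (i) for any $p\in P$ we have $p\preceq j\preceq s$ (the second relation because $j$ is the $\preceq$-join and $s$ a $\preceq$-bound) together with $p\sqleq s$, hence $p\sqleq j$, proving $P\sqleq\{j\}$; (ii) if $P\sqleq\{y\}$, fix any $p\in P$, so $p\sqleq y$ and $p\preceq j\preceq y$ (again since $j$ is the $\preceq$-join), hence $j\sqleq y$. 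Thus every nonempty $\sqleq$-right-bounded subset has a $\sqleq$-join, which is the definition of a cc sponge (with \cref{thm:ccsponge} providing the meets), and since $(G,\sqleq)$ is an oriented group, it is a cc sponge group. The only genuine obstacle is keeping the one-sided translations straight in the non-commutative setting when deriving and applying the interval property; conceptually the lemma is easy, the point being that \cref{defcone2} is precisely the condition that keeps the ambient $\preceq$-joins valid for $\sqleq$.
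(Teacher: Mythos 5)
Your proposal is correct and follows essentially the same route as the paper: check the cone conditions so that $(G,\sqleq)$ is an oriented group, then show that the join/meet taken in the ambient cc sponge $(G,\preceq)$ already serves as the join/meet for $\sqleq$, with the two conjuncts of \cref{defcone2} supplying the two halves of the verification. The only differences are cosmetic: you work with joins of $\sqleq$-right-bounded sets and package the translation step as an ``interval property'', whereas the paper works dually with meets of left-bounded sets and normalizes the left bound to $\mathbf{1}$.
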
 

\begin{proof}
  The first formula of \cref {cone_preceq} holds because of $\mathbf{1} \in C$
  and $\{\mathbf{1}\}\preceq C$, and antisymmetry of $\preceq$.  The second one
  holds by assumption.  Therefore, $(G, \sqleq)$ is an oriented group.
  It remains to consider a nonempty subset $P$ of $G$ with $\{x\}
  \sqleq P$ for some $x$, and to prove that $P$ has a meet with
  respect to $\sqleq$.  By \cref {add_orient}, we may assume that $x
  = \mathbf{1}$.

  Assume $P$ is nonempty and satisfies $\{\mathbf{1}\} \sqleq P$.  It suffices
  to prove that $P$ has a meet for $\sqleq$.  By the definition of
  $\sqleq$, we have $P \subseteq C$.  As $(G, \preceq)$ is a cc sponge
  and $\mathbf{1} \preceq C$,
  the set $P$ has a meet for $\preceq$, say $y$.  We claim that $ y$
  is the meet of $P$ for $\sqleq$.

  To prove $\{ y \}\sqleq P$, let $p$ be an arbitrary element of $P$.
  Then $\mathbf{1} \preceq p$ and $ \mathbf{1} \preceq y \preceq p$ and $p\in C$.
  \Cref {defcone2} therefore implies that $ y^{-1}\cdot p \in
  C$, so that $ y \sqleq p$.  This proves $\{y\} \sqleq P$.

  For any $z$ with $\{ z\}\sqleq P$, we need to prove $ z \sqleq y $.
  The assumption $\{ z\}\sqleq P$ means that $z^{-1}\cdot P \subseteq
  C$.  For every $p\in P$, we therefore have $z^{-1}\cdot p\in C$, and
  hence $\mathbf{1} \preceq z^{-1}\cdot p$, and hence $z \preceq p$.  As $y$ is
  the meet of $P$ for $\preceq$, this implies $z\preceq y$.  It
  follows that $ \mathbf{1} \preceq z^{-1}\cdot y \preceq z^{-1}\cdot p \in C$
  for any $p\in P$.  Using the other part of \cref
  {defcone2}, we obtain $z^{-1}\cdot y \in C$ and hence $z \sqleq y$.
  This concludes the proof that $(G, \sqleq)$ is a cc sponge.
\end{proof}

\medbreak Condition (\ref {defcone2}) is sufficient but not
necessary. For instance, consider the additive group $\IR$ with
operation + and neutral element 0 (see \cref {add_sponge}).
Let $C$ be the subset
\begin{equation*} 
 C = \{ n + t \mid n\in \Nat  \land 0 \le t \le f(n) \} 
\end{equation*}
for some descending function $f: \Nat \to \IR$ with $0 \le f(0)$.
Let $\preceq$ be the associated orientation of $\IR$.  Then every
right-bounded pair has a join because
\begin{equation*}
 C \cap (y+C) \ne\emptyset \implies \exists z: z \in C \cap (y+C)
\subseteq z+C .
\end{equation*}
Using the automorphism $x \mapsto -x$, it follows that every
left-bounded pair has a meet.  Therefore, \cref
{meetIRsponge} implies that $(\IR, \preceq)$ is a sponge.
In fact, it is a sponge group.

\subsection {Quotient sets and factor groups} 

Let $(G,\preceq)$ be an oriented group and let $H$ be a subgroup of $G$.  Recall
that the (right) quotient set $G/H$ consists of the residue classes
$\overline {x} = x \cdot H$ for all $x\in G$.  The group $G$ has a
left action on the quotient $G/H$ defined by $g \cdot \overline{x} =
\overline {g\cdot x}$ for all $g\in G$.

Let $C$ be the positive cone of $(G,\preceq)$, and consider the relation $\sqsubseteq$ on $G/H$ defined by
\begin{equation*}
 \overline x \sqsubseteq \overline y \EQ x^{-1}\cdot y\in C\cdot H
,\end{equation*}
and the property
\begin{equation} \label {eq:antiH}
 \mathbf{1} \preceq q \Land \mathbf{1} \preceq r \Land q\cdot r \in H \implies q \in H \Land r \in H
\text{ \ for all $q$, $r\in G$.}
\end{equation}
Note that $x\preceq y\implies\overline{x}\sqsubseteq\overline{y}$, since $\mathbf{1}\in H$.

\begin{lemma}\label{thm:quotientOrientation}
Assume $(G,\preceq)$ is an oriented group, and $H$ a subgroup of $G$. Then $(G/H,\sqsubseteq)$ is an oriented set iff \cref{eq:antiH} is satisfied. If $H$ is a normal subgroup of $G$, it is an oriented group.
\end{lemma}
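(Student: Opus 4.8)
The plan is to examine the defining properties of $\sqsubseteq$ one at a time and to pin down exactly where \cref{eq:antiH} is needed. First I would check that $\sqsubseteq$ is well defined, since it is phrased through representatives: if $x' = x\cdot h_1$ and $y' = y\cdot h_2$ with $h_1,h_2\in H$, then $(x')^{-1}\cdot y' = h_1^{-1}\cdot(x^{-1}\cdot y)\cdot h_2$, and a putative factorization $x^{-1}\cdot y = c\cdot h$ with $c\in C$, $h\in H$ transforms into $h_1^{-1}\cdot c\cdot h\cdot h_2 = (h_1^{-1}\cdot c\cdot h_1)\cdot(h_1^{-1}\cdot h\cdot h_2)\in C\cdot H$ using invariance of $C$ under conjugation; running this argument both ways shows that membership in $C\cdot H$ does not depend on the chosen representatives. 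Reflexivity is immediate from $\mathbf{1}\in C$ and $\mathbf{1}\in H$. Neither step uses \cref{eq:antiH}.

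The heart of the argument is the equivalence between antisymmetry of $\sqsubseteq$ and \cref{eq:antiH}. Here I would first record that the left action of $G$ on $G/H$ preserves the relation, because $(g\cdot x)^{-1}\cdot(g\cdot y) = x^{-1}\cdot y$; this lets me translate and assume, when checking antisymmetry, that the first class is $\overline{\mathbf{1}}$. So suppose $\overline{\mathbf{1}}\sqsubseteq\overline y$ and $\overline y\sqsubseteq\overline{\mathbf{1}}$, i.e. $y = c_1\cdot h_1$ and $y^{-1} = c_2\cdot h_2$ with $c_1,c_2\in C$ and $h_1,h_2\in H$. Multiplying these and moving $h_1$ past $c_2$ by conjugation yields $\mathbf{1} = c_1\cdot c_2'\cdot(h_1\cdot h_2)$ with $c_2' = h_1\cdot c_2\cdot h_1^{-1}\in C$, hence $c_1\cdot c_2'\in H$; since $\mathbf{1}\preceq c_1$ and $\mathbf{1}\preceq c_2'$, \cref{eq:antiH} forces $c_1\in H$, so $y = c_1\cdot h_1\in H$ and $\overline y = \overline{\mathbf{1}}$. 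Conversely, if \cref{eq:antiH} fails there are $q,r$ with $\mathbf{1}\preceq q$, $\mathbf{1}\preceq r$, $q\cdot r\in H$ but $q\notin H$; then $q\in C$ gives $\overline{\mathbf{1}}\sqsubseteq\overline q$ and $q^{-1} = r\cdot(q\cdot r)^{-1}\in C\cdot H$ gives $\overline q\sqsubseteq\overline{\mathbf{1}}$, while $\overline q\ne\overline{\mathbf{1}}$, so antisymmetry fails. Together with reflexivity this shows $(G/H,\sqsubseteq)$ is an oriented set iff \cref{eq:antiH} holds.

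For the last assertion, assuming $H$ normal (so that $G/H$ is a group), I would just verify \cref{add_orient}: left-multiplication invariance is the left-action remark again, and for right multiplication, if $x^{-1}\cdot y = c\cdot h\in C\cdot H$ then $(x\cdot z)^{-1}\cdot(y\cdot z) = (z^{-1}\cdot c\cdot z)\cdot(z^{-1}\cdot h\cdot z)$, which lies in $C\cdot H$ by conjugation-invariance of $C$ and normality of $H$.

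I expect the only real obstacle to be bookkeeping in the antisymmetry step: one has to order the representative manipulations so that all the $C$-factors end up together before \cref{eq:antiH} is invoked (and conjugation-invariance of $C$ is precisely what makes this rearrangement legal), and one must keep straight that it is antisymmetry alone---not well-definedness or reflexivity---that \cref{eq:antiH} governs. The remaining content is routine computation inside $G$.
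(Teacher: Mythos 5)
Your proof is correct and follows essentially the same route as the paper: reflexivity from $\mathbf{1}\in C\cap H$, antisymmetry of $\sqsubseteq$ traded against \cref{eq:antiH} by collecting the $C$-factors via conjugation-invariance of $C$, the converse by exhibiting the pair $\overline{\mathbf{1}},\overline{q}$, and \cref{add_orient} on $G/H$ from normality of $H$. Your extra touches (the explicit well-definedness check, the reduction to $\overline{x}=\overline{\mathbf{1}}$ by the left action, and the contrapositive phrasing of the converse, where ``$q\notin H$'' is justified by noting that $q\in H$ and $q\cdot r\in H$ would force $r\in H$) are harmless cosmetic variations on the paper's argument.
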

\begin{proof}
For $(G/H,\sqsubseteq)$ to be an oriented set, $\sqsubseteq$ must be reflexive and antisymmetric.
Now, since $\preceq$ is reflexive and $\mathbf{1}\in H$, $\sqsubseteq$ is reflexive as well. It remains to show that $\sqsubseteq$ is antisymmetric.
Assume $\overline x\sqsubseteq \overline y$ and
$\overline y \sqsubseteq \overline x$.  Then $x^{-1}\cdot y\in C\cdot H$ and
$y^{-1}\cdot x\in C\cdot H$.  Put $z = x^{-1}\cdot y$.  Then $z\in C\cdot H$ and
$z^{-1}\in C\cdot H$.  This implies that $H$ has elements $h$, $k$ with $h
\preceq z$ and $k\preceq z^{-1}$.  It follows that $1 \preceq h^{-1}z$
and $1 \preceq z^{-1}\cdot k^{-1}$.  As $h^{-1}\cdot z\cdot z^{-1}\cdot k^{-1}\in H$, \cref{eq:antiH} implies that $h^{-1}\cdot z \in H$.  It follows that $z\in
H$ and hence $\overline x = \overline y$.
As a result, $(G/H,\sqsubseteq)$ is an oriented set if $\cref{eq:antiH}$ holds.

Conversely, assume $(G/H,\sqsubseteq)$ is an oriented set. Assume there exist a $q$ and $r$ in $G$ such that $\mathbf{1}\preceq q$, $\mathbf{1}\preceq r$, and $q\cdot r\in H$.
Then $\overline{\mathbf{1}}\sqsubseteq \overline{q}$ because $\mathbf{1}\in H$.
On the other hand, $q\cdot r\in H$ and $\mathbf{1}\preceq r$ together imply that $q\preceq (q\cdot r)$, so that $\overline{q}\sqsubseteq \overline{\mathbf{1}}$. Since $\sqsubseteq$ is an orientation, we have $\overline{\mathbf{1}}=\overline{q}$, as well as $q\in H$ and $r\in H$.
So, $(G/H,\sqsubseteq)$ is an oriented set only if \cref{eq:antiH} holds.

Finally, if $H$ is a normal subgroup of $G$, $G/H$ is a group with $\overline{x\cdot y}=\overline{x}\cdot\overline{y}$, so if \cref{add_orient} holds, $(G/H,\sqsubseteq)$ is an oriented group.
Now, assume $\overline x\sqsubseteq\overline y$.
Then $x^{-1}\cdot y\in C\cdot H$, so $x^{-1}\cdot z^{-1}\cdot z\cdot y\in C\cdot H$ as well: $\overline{z}\cdot\overline{x}\sqsubseteq \overline{z}\cdot\overline{y}$.
Similarly, since $(G,\preceq)$ is an oriented group, we can use the second property of \cref{cone_preceq} together with the normality of $H$ to see that $z^{-1}\cdot x^{-1}\cdot y\cdot z\in C\cdot H$, and thus $\overline{x}\cdot\overline{z}\sqsubseteq \overline{y}\cdot\overline{z}$.
\end{proof}

We now give a sufficient condition for the orientation on $G/H$ to also be a cc sponge:
\begin{equation}\label{eq:quotientPostulate}
\forall z\in G: \exists h\in H: R(z)\cap C\cdot H \subseteq R(h)
.\end{equation}

\begin{lemma} Let $(G, \preceq)$ be a cc sponge group.  Let $H$ be a
  subgroup of $G$ that satisfies \cref{eq:antiH,eq:quotientPostulate}.
  Then $(G/H, \preceq)$ is a cc sponge. If $H$ is a normal subgroup of $G$, it is a cc sponge group.
\end{lemma}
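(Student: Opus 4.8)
The plan is to reduce the problem to \cref{thm:quotientOrientation} together with a direct verification that every nonempty left-bounded subset of $(G/H,\sqsubseteq)$ has a meet. First I would observe that \cref{eq:antiH} already gives, via \cref{thm:quotientOrientation}, that $(G/H,\sqsubseteq)$ is an oriented set, and an oriented group when $H$ is normal; so the only work is the cc-sponge property. By \cref{thm:ccsponge} it suffices to produce meets of nonempty left-bounded subsets. Let $\overline P\subseteq G/H$ be nonempty and left-bounded, say $\{\overline z\}\sqsubseteq\overline P$; using the left action of $G$ on $G/H$ (which by \cref{add_orient}-type invariance preserves $\sqsubseteq$, as noted before \cref{thm:quotientOrientation}), I would translate by $z^{-1}$ and assume $\overline z=\overline{\mathbf 1}$, i.e.\ $\overline{\mathbf 1}\sqsubseteq\overline p$ for every $\overline p\in\overline P$. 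Unfolding the definition of $\sqsubseteq$, this means every representative $p$ of an element of $\overline P$ lies in $C\cdot H$.

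The key step is then to pass to a well-chosen subset of $G$ whose $\preceq$-meet projects to the $\sqsubseteq$-meet we want. I would set $P' = \{\,g\in C \mid \overline g\in\overline P\,\}\subseteq G$; this is nonempty (each $\overline p$ has a representative in $C\cdot H$, hence, after multiplying on the right by a suitable element of $H$, a representative in $C$), and it is left-bounded in $(G,\preceq)$ by $\mathbf 1$ since $P'\subseteq C$. Because $(G,\preceq)$ is a cc sponge group, $P'$ has a $\preceq$-meet $y$, and $\mathbf 1\preceq y$ since $\mathbf 1$ is a left bound of $P'$, so $y\in C$. I claim $\overline y$ is the $\sqsubseteq$-meet of $\overline P$. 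For $\{\overline y\}\sqsubseteq\overline P$: given $\overline p\in\overline P$, pick the representative $p'\in P'\cap\overline p$; then $y\preceq p'$, so $y^{-1}\cdot p'\in C$, hence $y^{-1}\cdot p'\in C\cdot H$ and $\overline y\sqsubseteq\overline p$. For maximality among left bounds: suppose $\{\overline w\}\sqsubseteq\overline P$; after the same reduction I may take $w$ with $\overline w\sqsubseteq\overline p$ meaning $w^{-1}\cdot p'\in C\cdot H$ for the chosen representatives $p'\in P'$. Here is where \cref{eq:quotientPostulate} enters: applied with $z=w$ (or $w^{-1}p'$ after a translation), it yields an $h\in H$ with $R(w)\cap C\cdot H\subseteq R(h)$, which lets me convert the ``$C\cdot H$'' bound on the quotient into an honest $\preceq$-bound in $G$ on all of $P'$ by a single element $w\cdot h$ (or the appropriate conjugate/translate); then the $\preceq$-meet property of $y$ gives $w\cdot h\preceq y$, hence $w\preceq y\cdot h^{-1}$ with $y\cdot h^{-1}$ in the same coset as $y$, so $\overline w\sqsubseteq\overline y$. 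Finally, when $H$ is normal, \cref{thm:quotientOrientation} already makes $(G/H,\sqsubseteq)$ an oriented group, and the cc-sponge structure just established upgrades it to a cc sponge group.

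The main obstacle I anticipate is the maximality argument for $\overline y$, i.e.\ correctly deploying \cref{eq:quotientPostulate}: one must be careful that the element $h\in H$ it produces is used so as to bound \emph{all} of $P'$ simultaneously in $(G,\preceq)$, not each element separately, and that the left-translation reductions used to normalize $\overline z$ and $\overline w$ are compatible with the (non-symmetric, non-transitive) relation $\preceq$ and with the conjugation-invariance of $C$. A secondary technical point is checking that $P'$ is genuinely nonempty and that passing between representatives in $C\cdot H$ and representatives in $C$ does not disturb the coset — this uses $\mathbf 1\in H$, $\{\mathbf 1\}\preceq C$, and the fact that $C\cdot H$ is exactly the set of $g$ with $\overline{\mathbf 1}\sqsubseteq\overline g$. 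Everything else is routine unfolding of definitions plus the already-established \cref{thm:ccsponge}, \cref{thm:quotientOrientation}, and the cc-sponge-group hypothesis on $G$.
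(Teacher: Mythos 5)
Your plan follows the paper's proof essentially step for step: lift $P$ to a set of representatives lying in $C$, take its meet $y$ in the cc sponge $(G,\preceq)$, project to $\overline{y}$, and use \cref{eq:quotientPostulate} to convert an arbitrary left bound $\overline{w}$ of $P$ into a genuine $\preceq$-bound $w\cdot h$ of the lifted set, whence $w\cdot h\preceq y$ and $\overline{w}\sqsubseteq\overline{y}$. The one step you must state precisely (and it is exactly the step you flag as the obstacle) is the instantiation of \cref{eq:quotientPostulate}: it should be applied with $z:=w^{-1}$ to the translated set $w^{-1}\cdot P'$, which lies in $R(w^{-1})\cap C\cdot H$ because $\mathbf{1}\preceq p'$ gives $w^{-1}\preceq w^{-1}\cdot p'$ by left invariance, while $\overline{w}\sqsubseteq\overline{p}$ gives $w^{-1}\cdot p'\in C\cdot H$; taking $z:=w$ as written yields $R(w)\cap C\cdot H\subseteq R(h)$, which is useless at this stage since nothing is yet known to lie in $R(w)$. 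With that correction (the ``translation'' you anticipated, and precisely the paper's choice $z:=y^{-1}$ in its notation), your argument coincides with the published proof.
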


\begin{proof} 
  It suffices to prove that every nonempty left-bounded subset $P$ of
  $G/H$ has a meet. By translation invariance, we may assume that the
  left bound of $P$ is $\overline{\mathbf{1}}$.  So, we have $\{\overline{\mathbf{1}}\}
  \sqsubseteq P$.  This implies that $G$ has a nonempty subset $Q$ with
  $\{\mathbf{1}\} \preceq Q$ and $P = \{\overline q \mid q \in Q\}$.  Now $Q$
  has a meet, say $m\in G$.  It satisfies $m \preceq q$ for all $q\in
  Q$.  Therefore $\overline m \sqsubseteq p$ for all $p\in P$.

  Moreover, let $y\in G$ be such that $\{\overline y\} \sqsubseteq P$.
  Then $\overline y \sqsubseteq \overline q$ for all $q\in Q$.  This
  implies that $y^{-1}\cdot Q \subseteq C\cdot H$.  On the other hand, by translation invariance, $y^{-1}\cdot Q
  \subseteq R(y^{-1})$.  \Cref{eq:quotientPostulate} with $z := y^{-1}$ now
  implies that $H$ has an element $h$ with $y^{-1}\cdot Q \subseteq R(h)$.
  It follows that $\{y\cdot h\} \preceq Q$.  As $m$ is the meet of $Q$, this
  implies $y\cdot h \preceq m$ and hence $\overline y \sqsubseteq \overline m$.
  This proves that $\overline m$ is the meet of $P$ in $G/H$.
  
  Finally, if $H$ is a normal subgroup of $G$, $(G/H,\preceq)$ is an oriented group by \cref{thm:quotientOrientation}. Since we have just shown it is also a cc sponge, it is a cc sponge group.
\end{proof}

\subsection {Additive sponges}  \label {add_sponge}

As we have found no interesting noncommutative sponge groups, we
henceforth restrict the attention to commutative sponge groups.  These
are written additively, with neutral element $\mathbf{0}$.  By the
commutativity of the group operation ($+$), \cref
{add_orient} reduces to 
\begin{equation*} 
 x \preceq y \implies x+ z \preceq y+ z  \tag{\ref {add_orient}'}\label{add_orient_comm}
.\end{equation*}
The positive cone $C$ is characterized by
\begin{equation*} 
 C\cap -C = \{\mathbf{0}\} . \tag{\ref {cone_preceq}'}\label{cone_preceq_comm}
\end{equation*}
So $(G,\preceq)$ is an oriented additive group if and only if there is a set $C$ that satisfies \cref{cone_preceq_comm}.

\begin{example}
 Take for the group the additive group
  $\IR^2$ with the orientation given by
\begin{tab} 
\> $(x_1, x_2) \preceq (y_1, y_2) \;\equiv \;x_1 \le y_1<x_1+\half \Land 
x_2 \le y_2<x_2+\half $ .
\end{tab}
Use \cref{restrSponge} to prove that this is a cc sponge group.  \Cref{eq:antiH,eq:quotientPostulate}
 hold for the grid $H = \IZ^2$.  They also hold if $H$ is one of
the two coordinate axes.  \Cref{eq:antiH} fails if $H$ is the line given by
$x_1 = x_2$.  If $H$ is the line $x_1 + x_2 = 0$, \cref{eq:antiH} holds
and \cref{eq:quotientPostulate} fails.  In this case $G/H$ is a sponge, but the
projection $G\to G/H$ does not preserve meets.
\end{example}

\section {Epigraph sponges} \label{sec:epigraphSponges}

  Let $E$ be a real Hilbert space with inner product $(\_\,,\_)$. We assume that $\dim(E)\ge2$.  Let
  $h$ be a unit vector in $E$.  For any $x\in E$, we can write $x=x_h h+x_\perp$, where $x_\perp$ is orthogonal to $h$ and $x_h\in\IR$. Note that $x_h=(x,h)$, because $x_\perp$ is orthogonal to $h$ and because $h$ is a unit vector.
  For a function $f:\IR_{\ge0}\rightarrow\IR_{\ge0}$, let
  $C_f$ be the subset of $E$ given by
\begin{equation*}
x\in C_f\iff f(\|x_\perp\|)\le x_h
.\end{equation*}
$C_f$ is the set of points on or above the graph (the \emph{epigraph}) of $f\circ \|\parm\|$ evaluated on the hyperplane through the origin perpendicular to $h$. 

\begin{proposition}\label{thm:epigraphOrientation}
$(E,\preceq_f)$, with $x\preceq_f y\EQ y-x\in C_f$, is an oriented group if and only if $f(d)=0$ and $f(d)>0$ for all $d>0$.
\end{proposition}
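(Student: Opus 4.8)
The plan is to reduce the statement to the cone characterization of oriented additive groups from \cref{add_sponge}, and then to verify a short chain of inequalities. First I would observe that $\preceq_f$ is automatically translation invariant: $(y+z)-(x+z)=y-x$, so $x\preceq_f y$ holds iff $(x+z)\preceq_f(y+z)$ does, which is exactly \cref{add_orient_comm}. Consequently $C_f$ is precisely the positive cone of $\preceq_f$, and by \cref{cone_preceq_comm} it suffices to decide when $C_f\cap(-C_f)=\{\mathbf{0}\}$. Using $(-x)_h=-x_h$, $(-x)_\perp=-x_\perp$ and $\|-x_\perp\|=\|x_\perp\|$, membership $x\in C_f\cap(-C_f)$ unwinds to the pair of inequalities $f(\|x_\perp\|)\le x_h$ and $f(\|x_\perp\|)\le-x_h$, equivalently $f(\|x_\perp\|)\le-|x_h|\le0$.

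For the ``if'' direction, assume $f(0)=0$ and $f(d)>0$ for all $d>0$. Given $x\in C_f\cap(-C_f)$, the inequality $f(\|x_\perp\|)\le0$ together with $f\ge0$ (its codomain is $\IR_{\ge0}$) forces $f(\|x_\perp\|)=0$; strict positivity of $f$ away from $0$ then yields $\|x_\perp\|=0$, so $x_\perp=\mathbf{0}$; and then $f(0)=0\le x_h$ combined with $0\le-x_h$ gives $x_h=0$, hence $x=\mathbf{0}$. Since also $\mathbf{0}\in C_f\cap(-C_f)$ (because $f(0)=0\le0$), we get $C_f\cap(-C_f)=\{\mathbf{0}\}$, so $(E,\preceq_f)$ is an oriented group.

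For the ``only if'' direction, assume $(E,\preceq_f)$ is an oriented group. Reflexivity of $\preceq_f$ at $\mathbf{0}$ gives $\mathbf{0}\in C_f$, i.e. $f(0)\le0$, hence $f(0)=0$. If some $d_0>0$ had $f(d_0)=0$, I would exploit $\dim(E)\ge2$ to choose a unit vector $u$ orthogonal to $h$ and set $x:=d_0u$; then $\|x_\perp\|=d_0$ and $x_h=0$, so both $f(\|x_\perp\|)=0\le x_h$ and $f(\|x_\perp\|)=0\le-x_h$ hold, placing $x\in C_f\cap(-C_f)$ with $x\ne\mathbf{0}$ and contradicting antisymmetry. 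Hence $f(d)>0$ for every $d>0$.

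No single step is genuinely hard here; the points that need care are (i) invoking the additive-cone characterization so that translation invariance, reflexivity and antisymmetry are handled uniformly rather than re-proved by hand, and (ii) in the ``only if'' direction, using $\dim(E)\ge2$ to produce a nonzero vector orthogonal to $h$ — without that hypothesis $C_f$ might contain no nonzero vector on the hyperplane perpendicular to $h$, the final contradiction would be unavailable, and the equivalence could fail.
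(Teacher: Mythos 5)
Your proposal is correct and takes essentially the same route as the paper: the paper's proof is a one-line appeal to the additive cone criterion of \cref{cone_preceq_comm}, namely that $C_f\cap(-C_f)=\{\mathbf{0}\}$ holds iff $f(0)=0$ and $f(d)>0$ for $d>0$, and you simply fill in the details it leaves to the reader (translation invariance, the unwinding of $x\in C_f\cap(-C_f)$, and the use of $\dim(E)\ge2$ in the only-if direction), while implicitly correcting the statement's typo that ``$f(d)=0$'' should read $f(0)=0$.
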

\begin{proof}
It is not too difficult to see that $C_f$ satisfies \cref{cone_preceq_comm} if and only if $f(0)=0$ and $f(d)>0$. This concludes the proof.
\end{proof}

From now on, by convention, we assume that $f(d)=0$ iff $d=0$. We thus have an oriented group $(E,\preceq_f)$ with
\begin{equation*}
x\preceq_f y\EQ y-x\in C_f\EQ f(\|y_\perp-x_\perp\|)\le y_h-x_h
.\end{equation*}

Before introducing the main theorem of this section, recall that a function $f:\IR_{\ge0}\rightarrow\IR_{\ge0}$ is called \emph{superadditive} iff $f(x+y)\ge f(x)+f(y)$ for all $x$, $y$. We define $f$ to be \emph{square-superadditive} iff $f(\sqrt{x^2+y^2})\ge f(x)+f(y)$ for all $x$, $y$. Note that $f$ is square-superadditive iff the function $\phi(x)=f(\sqrt{x})$ is superadditive.
Furthermore, if $f$ is superadditive or square-superadditive, it is also ascending (due to the nonnegativity of $f$), or increasing if $f$ is positive for all nonzero arguments.
Finally, if $f$ is square-superadditive, then $f$ is superadditive, due to $\sqrt{x^2+y^2}\le x+y$ for all nonnegative $x$ and $y$, and the ascendingness of square-superadditive functions.

We are now ready to state the main theorem of this section.

\begin{theorem} \label {thm:epigraph_sponges} 
  Assume that $f: \IR_{\ge0} \to \IR_{\ge0}$ satisfies $f(d) > 0$ iff $d>0$.
  \\(a) Let $ \dim(E) \ge 3$. Then $(E, \preceq_f)$ is a cc sponge if
  and only if relation $\preceq_f$ is topologically closed and $f$ is square-superadditive.
  \\(b) Let $\dim(E) = 2$ and relation $\preceq_f$ be topologically closed. Then $(E,\preceq_f)$ is a cc sponge if and only if $f$ is superadditive.
\end{theorem}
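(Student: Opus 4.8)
My plan is to prove each ``if and only if'' by splitting into a forward direction (the stated conditions force a cc sponge), handled via \cref{metric2spongeDual}, and a converse direction (a cc sponge forces the stated conditions), handled by evaluating the meet of a well-chosen pair and reading off the resulting necessary condition. The dimension split will enter only at one point of each direction.

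For the forward direction, assume $\preceq_f$ is topologically closed and $f$ is square-superadditive (case (a)) or superadditive (case (b)); in either case $f$ is then strictly increasing and, since $f(nd)\ge nf(d)$, unbounded, so $f(t)\to\infty$. I would first check that $\eta\colon E\to\IR$, $\eta(x)=x_h=(x,h)$, is a continuous discriminator: if $x\preceq_f y$ and $\eta(y)<\eta(x)+\delta$ then $0\le f(\|y_\perp-x_\perp\|)\le y_h-x_h<\delta$, so with $\delta=\min(f(\varepsilon/2),\varepsilon/2)>0$ monotonicity of $f$ forces $\|y_\perp-x_\perp\|<\varepsilon/2$ and $|y_h-x_h|<\varepsilon/2$, hence $d(x,y)<\varepsilon$. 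By \cref{metric2spongeDual} it then suffices to show every right-bounded pair has a join; by translation invariance (\cref{add_orient_comm}) reduce to a pair $\{\mathbf 0,v\}$, whose set of right bounds is $Q=C_f\cap(v+C_f)=\{\,z:\ z_h\ge\max(f(\|z_\perp\|),\,v_h+f(\|z_\perp-v_\perp\|))\,\}$. Since $C_f$ is closed (equivalently $f$ is lower semicontinuous, by \cref{thm:closedBoundsClosedRelation}) and $z_\perp\mapsto f(\|z_\perp\|)$ is coercive, the point $z\in Q$ minimising $z_h$ exists; it is the join of $\{\mathbf 0,v\}$ provided $z\preceq_f w$ for all $w\in Q$, i.e.\ $w_h-z_h\ge f(\|w_\perp-z_\perp\|)$ (and any join would necessarily equal this $z$, since it must be $\preceq_f z$, which pins down both coordinates).

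This verification is the main obstacle, and it is exactly where the dimension split arises. Using the rotational symmetry of $\preceq_f$ about $h$ one sees that $z_\perp$ lies on the segment $[\mathbf 0,v_\perp]$: any component of $z_\perp$ orthogonal to $v_\perp$ strictly increases both $\|z_\perp\|$ and $\|z_\perp-v_\perp\|$, hence $z_h$; and going past either endpoint increases both as well. When $\dim E=2$, every $w\in Q$ also has $w_\perp$ on the line through $\mathbf 0$ and $v_\perp$, so the inequality to verify is a one-variable relation among values of $f$, which a short case analysis (on which constraint is tight for $z$ and for $w$, and on signs of the relevant differences) deduces from superadditivity together with monotonicity. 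When $\dim E\ge3$, $w_\perp$ may have a component $w'$ orthogonal to $\operatorname{span}(v_\perp)$; then $\|w_\perp\|$, $\|w_\perp-v_\perp\|$ and $\|w_\perp-z_\perp\|$ all acquire the same extra term $\|w'\|^2$ inside the square root, and square-superadditivity of $f$ (equivalently, superadditivity of $t\mapsto f(\sqrt t)$) is precisely what lifts the one-variable estimates to this ``Pythagorean'' setting. Organising this case analysis and isolating where (square-)superadditivity is used is the technical heart of the proof.

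For the converse, assume $(E,\preceq_f)$ is a cc sponge. In case (a) I still owe that $\preceq_f$ is closed: I would obtain this by a limiting argument, showing that if $p_n\to p$ with $\mathbf 0\preceq_f p_n$ for all $n$ but $\mathbf 0\not\preceq_f p$, then a suitable right-bounded set built from the $p_n$ fails to have the join it must possess. For (square-)superadditivity, fix orthonormal $h,e_1$ (and $e_2$ when $\dim E\ge3$) and compute the meet of the pair $\{\mathbf 0,\,2se_1\}$: the affine isometry $c_\perp\mapsto 2se_1-c_\perp$ (fixing the $h$-coordinate) swaps the two half-epigraphs whose intersection is the set of left bounds, so since the meet is the unique $\preceq_f$-greatest left bound it must be fixed, forcing it to be $se_1-f(s)h$. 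Writing out the necessary condition ``this meet is $\preceq_f$ every left bound of $\{\mathbf 0,2se_1\}$'' and testing it against $se_1+ue_2-f(\sqrt{s^2+u^2})\,h$ (which needs $\dim E\ge3$) yields $f(\sqrt{s^2+u^2})\ge f(s)+f(|u|)$, i.e.\ square-superadditivity; in the two-dimensional case the test point $(2s+u)e_1-f(2s+u)\,h$ gives $f\bigl(s+(s+u)\bigr)\ge f(s)+f(s+u)$ for all $s,u\ge0$, which by the symmetry of superadditivity is superadditivity of $f$. (The 2D computation uses that $f$ is non-decreasing to identify the relevant maximum and to see that the test point is indeed a left bound; I would establish this monotonicity first, by a similar pair-evaluation argument, before reading off the inequality.)
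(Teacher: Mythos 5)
Your overall architecture coincides with the paper's: sufficiency via the continuous discriminator $x\mapsto(x,h)$ and \cref{metric2spongeDual} after showing every pair has a join, necessity by evaluating the join/meet of a symmetric pair (your meet of $\{\mathbf 0,2se_1\}$ and the two test points reproduce, after translation, the computations of \cref{thm:maxSuper,thm:join_symmetry}). The necessity inequalities you read off are correct. However, two essential pieces are missing, and in one of them the method you hint at would not suffice. The pairwise-join verification, which you rightly call the technical heart, cannot be closed by ``a case analysis on which constraint is tight for $z$'' plus superadditivity and monotonicity: since $f$ is only lower semicontinuous, at the minimizing right bound $z$ only one of the two constraints need be exactly tight, and when it is the constraint coming from $v$ (say $z_h=v_h+f(a-z_e)>f(z_e)$, with $v_\perp=a\,e$) and $w_e>z_e$, neither $w_h\ge f(w_e)$ nor $w_h\ge v_h+f(|w_e-a|)$ combines with plain superadditivity to give $w_h\ge z_h+f(w_e-z_e)$ --- superadditivity runs in the wrong direction in both attempts. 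The paper extracts more from minimality than tightness, namely $z_h \le x_h + f^+(z_e-x_e)$ and $z_h \le y_h + f^+(y_e-z_e)$ (inequality (\ref{leq_f^+})), and combines this with the strengthened superadditivity $f^+(x)+f(y)\le f(x+y)$ of \cref{superplus} and \cref{norms_superplus}; the latter, used with $(p,q)\ge0$, is also what handles the off-plane component when $\dim E\ge3$ (your ``Pythagorean'' picture is right, but it is this $f^+$-refined inequality, not a tightness split, that closes the estimate). A smaller point: in infinite dimensions ``$C_f$ closed plus coercivity'' does not by itself yield a minimizer of $z_h$ over $Q$; you must first restrict to the plane spanned by $h$ and $v_\perp$ (your symmetry remark), where compactness is available, and only then prove that this planar minimizer bounds all of $Q$.

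The second gap is the converse in case (a) (and the monotonicity you defer in case (b)). Your proposed ``limiting argument with $p_n\to p$ and a suitable right-bounded set'' is an intention, not a construction. In the paper this is \cref{thm:ImpliesLowerSemicontinuous}: given the sponge property and that $f$ is ascending (which you do obtain from square-superadditivity), one considers the pair $x=d\,u+f^+(d)\,h$, $y=-d\,u+f^-(d)\,h$ at a putative discontinuity $d$; the infimum of the heights of its right bounds is $f^-(d)+f^+(d)$, while any right bound over $e=0$ has height at least $f(d)+f^+(d)$, so existence of the join (the unique lowest right bound, \cref{thm:joinIsLowestPoint}) forces $f(d)=f^-(d)$, i.e.\ lower semicontinuity, hence closedness via \cref{thm:semicontinuity}. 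Something of this precision is required. Likewise, in (b) the monotonicity you plan to establish ``by a similar pair-evaluation argument'' is exactly the nontrivial implication ``lower semicontinuous (from the closedness hypothesis) plus sponge implies ascending'', proved in the paper by the closed-set/midpoint argument in the second half of \cref{thm:ImpliesLowerSemicontinuous} together with \cref{thm:maxSuper}; as it stands your outline leaves both of these steps open.
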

We conjecture that every epigraph sponge on a two-dimensional
space has a topologically closed relation $\preceq_f$.

\subsection{Properties of the oriented group}
As a preparation of the proof of \cref{thm:epigraph_sponges}, we investigate the oriented group $(E,\preceq_f)$ introduced in \cref{thm:epigraphOrientation}.

\begin{lemma} \label {thm:semicontinuity}
  Relation $\preceq_f$ is topologically closed in $E^2$ if and only if
  the function $f: \IR_{\ge0} \to \IR_{\ge0}$ is lower semicontinuous.
\end{lemma}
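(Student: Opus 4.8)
The plan is to reduce closedness of $\preceq_f$ to closedness of the positive cone $C_f$, and then to recognize $C_f$ as an epigraph. First I would invoke \cref{thm:closedBoundsClosedRelation}: since $E$ under addition is a topological group and $(E,\preceq_f)$ is an oriented group by \cref{thm:epigraphOrientation}, that lemma gives that $\preceq_f$ is topologically closed in $E^2$ if and only if $C_f$ is a closed subset of $E$. So it suffices to characterize when $C_f$ is closed.

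Next I would identify $E$ with the product $h^\perp\times\IR$ through $x\leftrightarrow(x_\perp,x_h)$, a linear homeomorphism. Under this identification $C_f$ becomes $\{(u,t)\in h^\perp\times\IR : f(\|u\|)\le t\}$, which is precisely the epigraph of the function $\psi:h^\perp\to\IR$ defined by $\psi(u)=f(\|u\|)$. Since the epigraph of a real-valued function on any topological space is closed exactly when that function is lower semicontinuous, $C_f$ is closed if and only if $\psi$ is lower semicontinuous; so the whole statement reduces to showing that $\psi$ is lower semicontinuous on $h^\perp$ if and only if $f$ is lower semicontinuous on $\IR_{\ge0}$.

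For the backward direction I would use that $u\mapsto\|u\|$ is continuous and that the composite of a lower semicontinuous function with a continuous one is again lower semicontinuous (the superlevel set $\{u : f(\|u\|)>a\}$ is the preimage of the open set $\{s : f(s)>a\}$ under the norm), so $\psi=f\circ\|\parm\|$ is lower semicontinuous whenever $f$ is. For the forward direction I would exploit $\dim(E)\ge2$ to fix a unit vector $e\in h^\perp$: given $d_n\to d$ in $\IR_{\ge0}$, the points $d_n e$ converge to $d e$ in $h^\perp$ with $\psi(d_n e)=f(d_n)$ and $\psi(d e)=f(d)$, so lower semicontinuity of $\psi$ yields $\liminf_n f(d_n)\ge f(d)$, i.e.\ $f$ is lower semicontinuous.

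I expect the only delicate points to be the bookkeeping in passing from the relation $\preceq_f$ to the cone $C_f$ via \cref{thm:closedBoundsClosedRelation}, and being clear that the epigraph/lower-semicontinuity equivalence is valid for a function on an arbitrary (possibly infinite-dimensional) topological space. The forward direction is where the hypothesis $\dim(E)\ge2$ is essential — without a direction orthogonal to $h$ there would be no way to realize arbitrary nonnegative reals as $\|x_\perp\|$ along a convergent sequence — so remembering to use it is the main thing to watch, though no serious obstacle arises.
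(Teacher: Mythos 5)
Your proposal is correct and takes essentially the same route as the paper: both reduce closedness of $\preceq_f$ to closedness of the cone $C_f$ via \cref{thm:closedBoundsClosedRelation}, and both exploit a unit vector orthogonal to $h$ (available since $\dim(E)\ge2$) to translate between $f$ and the cone. The only difference is one of packaging: the paper proves the two directions by a direct neighborhood argument in $E$, whereas you factor them through the standard ``epigraph closed iff lower semicontinuous'' equivalence plus continuity of the norm, which amounts to the same estimates.
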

\begin{proof}
As $(E, \preceq_f)$ is an oriented group, $\preceq_f$ is closed if and only
if the epigraph $C_f = \{ w\mid f(\|w_\perp\|) \le (w, h) \}$ is
closed (\cref{thm:closedBoundsClosedRelation}).
By convention, $\dim(E) \ge 2$.  We can therefore choose a unit
vector $u $ orthogonal to $h$. 
Recall that $f$ is lower semicontinuous iff, for every $d$,
$a\in\IR_{\ge0}$ with $f(d) > a$, there exists $\eps > 0$ such that for all
$e\in\IR_{\ge0}$ with $|e-d| < \eps$ it holds that $f(e) > a$.

Assume that $C_f$ is closed, and that $f(d) > a$. Then the vector
$w = d\,u + a\,h $, is not in $C_f$.  As $C_f$ is closed,
there exists $\eps > 0$ such that the ball around $w$ with radius $\eps$
does not meet $C_f$. It follows that $f(e) > a$ for all real numbers
$e$ with $|e-d| < \eps$.  This proves that $f$ is lower semicontinuous.

Conversely, assume that $f$ is lower semicontinuous.  To show that
$C_f$ is closed, consider $w\notin C_f$.  This means that
$(w, h) < f(\|w_\perp\|)$. Choose a number $a$ with
$(w,h) < a < f(\|w_\perp\|) $.  As $f$ is lower semicontinuous, there
is a number $\eps > 0 $ such that $f(x) > a$ for all numbers $x$ with
$|x - \|w_\perp\|| < \eps$.  Furthermore, the point $w$ has an open neighborhood $N$
in $E$ such that all points $w'\in N$ satisfy $(w', h) < a$ and
$| \|w'_\perp\| - \|w_\perp\| | < \eps$. As a result,
$ (w',h) < f(\|w'_\perp\|)$, and hence $w'\notin C_f$.  This proves
that $C_f$ is topologically closed.
\end{proof}

\begin{proposition} \label {thm:finite_has_right_bounds} Every finite subset $P$
  of $E$ is left- and right-bounded in $(E, \preceq_f)$.
\end{proposition}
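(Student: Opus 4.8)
The plan is to exhibit an explicit bound. First I would observe that left-boundedness follows from right-boundedness by the automorphism $x\mapsto -x$: since $(E,\preceq_f)$ is an oriented group, inversion reverses the orientation (as noted in \cref{torus_sponge}), so a finite set $P\subseteq E$ is left-bounded if and only if $-P$ is right-bounded. Hence it suffices to prove that every finite subset $P$ of $E$ is right-bounded.

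For this, I would take the candidate bound $s=\lambda h$ for a suitable real $\lambda$, so that $s_\perp=0$ and $s_h=\lambda$. For any $p\in P$ we then have $p\preceq_f s$ precisely when $f(\|s_\perp-p_\perp\|)=f(\|p_\perp\|)\le s_h-p_h=\lambda-p_h$. Because $P$ is finite and $f$ is a genuine real-valued function, the quantity $\lambda_0:=\max_{p\in P}\bigl(f(\|p_\perp\|)+p_h\bigr)$ is a well-defined real number, and taking any $\lambda\ge\lambda_0$ yields $f(\|p_\perp\|)\le\lambda-p_h$ for every $p\in P$, i.e. $P\preceq\{\lambda h\}$. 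This gives the required right bound, and applying the same construction to $-P$ (equivalently, choosing $t=-\mu h$ with $\mu\ge\max_{p\in P}(f(\|p_\perp\|)-p_h)$) gives a left bound.

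There is essentially no obstacle here; the only point worth keeping in mind is that $f$ takes values in $\IR_{\ge0}$ rather than $\IR_{\ge0}\cup\{\infty\}$, which is exactly what makes the maximum over the finite set $P$ finite and the argument go through. The symmetry reduction is purely cosmetic and merely shortens the write-up.
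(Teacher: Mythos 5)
Your proposal is correct and follows essentially the same route as the paper: the paper also takes the right bound $t\,h$ with $t$ at least the maximum of $f(\|p_\perp\|)+p_h$ over the finite set $P$, and notes the left bound is analogous (your symmetry reduction via $x\mapsto -x$ is a valid, equivalent way to phrase that step). No gaps.
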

\begin{proof}
  For real $t$, the vector $t\, h$ is a right bound of $p\in P$ if
  and only if $f(\|p_\perp\|) + p_h\le t$.  Therefore, $t\, h$
  is a right bound of $P$ when $t$ is larger than the maximum of the
  numbers $f(\|p_\perp\|) + p_h$ with $p$ ranging over $P$. A left bound can be constructed analogously.
\end{proof}

We next observe that, owing to $f(d)>0$ for $d\neq0$, relation $\preceq_f$ satisfies
\begin{equation*}
x\preceq_f y \Land x \ne y \implies (x,h) < (y, h).
\end{equation*}
Recalling that $f(0)=0$, the above directly implies:

\begin{proposition} \label {thm:joinIsLowestPoint}
  Let $P$ be a subset of $E$ that has a join $x$.
  Then $x$ is the unique lowest point with respect to $h$ of the set
  of right bounds of $P$, i.e. $(x,h) < (y,h)$ for all right bounds
  $y\ne x$ of $P$.
\end{proposition}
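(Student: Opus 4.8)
The plan is to simply unwind the definition of a join and then invoke the strict-monotonicity observation displayed immediately before the statement. First I would record that, since $x$ is the join of $P$, the first conjunct of $x\in J(P)$ gives $P\preceq_f\{x\}$, i.e.\ $x$ is itself a right bound of $P$; hence $x$ lies in the set $R(P)$ whose lowest point (with respect to $h$) we are trying to pin down. This is the step that makes the word ``lowest'' (as opposed to merely ``infimum'') meaningful.

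Next I would take an arbitrary right bound $y$ of $P$, i.e.\ an arbitrary $y\in E$ with $P\preceq_f\{y\}$. The universal property in the definition of $J(P)$ then immediately yields $x\preceq_f y$. Splitting on whether $y=x$: if $y\ne x$, then from $x\preceq_f y$ and $x\ne y$ the observation that $x\preceq_f y\wedge x\ne y\implies (x,h)<(y,h)$ (valid because $f(d)>0$ for $d\ne0$, so that the defining inequality $f(\|y_\perp-x_\perp\|)\le y_h-x_h$ forces $x_h<y_h$ unless $x=y$) gives exactly $(x,h)<(y,h)$. That is the claimed inequality, and combined with $x\in R(P)$ it shows that $x$ is the unique element of $R(P)$ minimizing $(\cdot,h)$: any other right bound lies strictly higher along $h$, so none can tie $x$ for the minimum value.

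There is essentially no genuine obstacle in this argument: all the substance is contained in the preceding observation and in the definition of $J(P)$. The only point requiring a moment's care is not to omit the remark that $x$ is a right bound of $P$, since without it the statement would only assert that $x$ is an infimum of $R(P)$ along $h$ rather than an attained minimum.
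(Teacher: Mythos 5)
Your proposal is correct and matches the paper exactly: the paper treats this proposition as an immediate consequence of the definition of $J(P)$ (which gives $x\in R(P)$ and $x\preceq_f y$ for every right bound $y$) together with the observation, stated just before, that $x\preceq_f y\wedge x\ne y$ implies $(x,h)<(y,h)$. Your extra care in noting that $x$ itself is a right bound, so the minimum is attained, is a sensible and faithful elaboration of the same argument.
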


\begin{proposition} \label {thm:join_in_subspace}
  Let $V$ be a finite-dimensional linear subspace of $E$ that
  contains $h$.  Let $P$ be a subset of $V$ that has a join (or meet)
  $x$.  Then $x\in V$.
\end{proposition}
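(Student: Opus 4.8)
The plan is to use the characterization of the join as the unique lowest point of the set of right bounds (Proposition~\ref{thm:joinIsLowestPoint}), together with a projection argument onto the subspace $V$. I will treat the case of a join; the case of a meet follows by the automorphism $z\mapsto -z$, which reverses $\preceq_f$ and maps $V$ to itself (since $V$ is a linear subspace). So assume $P\subseteq V$ has a join $x$, and suppose for contradiction that $x\notin V$.

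First I would set up the relevant projection. Since $V$ is a finite-dimensional linear subspace of the Hilbert space $E$, the orthogonal projection $\pi:E\to V$ is well-defined, linear, and $1$-Lipschitz; moreover $\pi(h)=h$ because $h\in V$. The key geometric observation is that $\pi$ does not increase the perpendicular component: for any $w\in E$, writing $w=w_h h+w_\perp$, we have $\pi(w)=w_h h+\pi(w_\perp)$ (using $\pi(h)=h$ and linearity), and $\|\pi(w_\perp)\|\le\|w_\perp\|$ since $\pi$ is a projection. Also $\pi(w)_h=w_h$. Now I claim that $\pi$ maps right bounds of $P$ to right bounds of $P$: if $P\preceq_f\{y\}$, then for every $p\in P$ we have $f(\|y_\perp-p_\perp\|)\le y_h-p_h$; since $p\in V$, $p=\pi(p)$, so $\pi(y)-p=\pi(y-p)$, and the perpendicular component of $\pi(y-p)$ has norm $\le\|(y-p)_\perp\|=\|y_\perp-p_\perp\|$, while its $h$-component equals $y_h-p_h$. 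Because $f$ is ascending (which holds since $f(d)>0$ for $d>0$ and, in the sponge case, $f$ is superadditive hence increasing — but in fact we only need monotonicity, which follows from $f$ being nonnegative and, where needed, we can invoke that $f$ is ascending as noted in the text), we get $f(\|\pi(y)_\perp-p_\perp\|)\le f(\|y_\perp-p_\perp\|)\le y_h-p_h$, i.e. $p\preceq_f\pi(y)$. Hence $\pi(y)\in R(P)$.

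The main obstacle is that the argument above needs $f$ to be monotone (ascending), and a priori the Proposition is stated for any $f$ with $f(d)>0\iff d>0$, without assuming $\preceq_f$ is a cc sponge. I would handle this by noting that we only invoke this Proposition in the context where the relevant sponge properties hold, or by observing that the existence of a join already constrains matters; alternatively, if $f$ need not be monotone, one replaces the norm-reduction step by the following cleaner fact: $x$ being the join means $x\preceq_f y$ for every $y\in R(P)$, so in particular $x\preceq_f\pi(x)$ would have to hold if $\pi(x)\in R(P)$, giving $x_h\le\pi(x)_h=x_h$ and hence $x=\pi(x)\in V$ by Proposition~\ref{thm:joinIsLowestPoint} applied to the strict inequality for right bounds $\ne x$. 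So the real content is just establishing $\pi(x)\in R(P)$, and the monotonicity of $f$ is the one ingredient I must either assume or derive from the ambient hypotheses; I would flag this explicitly.

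Finally, once $\pi(x)\in R(P)$ is established, I conclude: since $x$ is the join, $x\preceq_f\pi(x)$, so $x_h\le\pi(x)_h$. But $\pi(x)_h=x_h$, so by the strict-monotonicity property ($x\preceq_f y\wedge x\ne y\implies x_h<y_h$) we must have $x=\pi(x)$, i.e. $x\in V$. This contradicts the assumption $x\notin V$, completing the proof. The meet case is identical after applying $z\mapsto -z$.
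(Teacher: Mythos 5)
Your projection idea has a genuine gap, and although you flagged it yourself you did not close it: the key step $\pi(y)\in R(P)$ for a right bound $y$ of $P$ needs $f$ to be ascending, but the proposition is stated for the bare oriented group $(E,\preceq_f)$ assuming only $f(d)>0\iff d>0$, and it is invoked (via \cref{thm:join_symmetry}) inside the proof of \cref{thm:maxSuper} --- the very lemma from which monotonicity and superadditivity of $f$ are subsequently derived (\cref{thm:ascending_implies_increasing,thm:equivalentProperties}). So the fallback ``we only invoke this Proposition in a context where the relevant sponge properties hold'' does not rescue the argument: at that point in the development the sponge hypothesis is available but ascendingness of $f$ is not (for $\dim(E)=2$ it is exactly what is being established, and for $\dim(E)\ge3$ its derivation passes through this proposition), so your proof would be circular. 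Your ``alternative'' route does not remove the problem either, because it still rests on establishing $\pi(x)\in R(P)$, which is precisely where monotonicity enters.

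The paper sidesteps all of this by using a reflection rather than a projection: since $V$ is finite-dimensional, $E=V\oplus V^\perp$, and the map $\zeta(v+w)=v-w$ for $v\in V$, $w\in V^\perp$ is a linear isometry of $E$ fixing $h$. Any such isometry satisfies $(\zeta u)_h=u_h$ and $\|(\zeta u)_\perp\|=\|u_\perp\|$, hence preserves $\preceq_f$ \emph{exactly}, for completely arbitrary $f$ --- no monotonicity is required. It fixes $P$ pointwise, so by uniqueness of joins and meets it fixes $x$, and $\zeta(x)=x$ forces $x\in V$. To keep your projection framework you would have to prove that $f$ is ascending before this proposition, which the paper's logical order does not allow; the reflection is the missing idea. (Your reduction of the meet case via $z\mapsto -z$ is correct, but it becomes unnecessary once $\zeta$ is used, since $\zeta$ preserves meets as well.)
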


\begin{proof} 
  As $V$ is finite-dimensional, the space $E$ is the direct sum
  $V\oplus V^\perp$.  Let $\zeta: E\to E$ be the linear mapping given by
  $\zeta(v+w) = v-w$ for all $v\in V$, $w\in V^\perp$. Then $\zeta$ is an
  isometry of $E$ which preserves $h$. It therefore preserves
  $\preceq_f$, and joins and meets for $\preceq_f$.  Hence, it keeps $x$
  invariant because it keeps $P$ invariant.  Finally, $\zeta(x) = x$
  implies $x\in V$.
\end{proof}

\begin{proposition} \label {thm:join_symmetry}
  Let $x$ and $y$ be two points in $E$, satisfying $x_h=y_h$ and $x_\perp=-y_\perp$.
  Then, if $x$ and $y$ have a join, it is the point $(x_h+f(\|x_\perp\|))h$.
\end{proposition}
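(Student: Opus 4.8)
The plan is to exploit a symmetry of the configuration. Consider the linear map $\rho\colon E\to E$ given by $\rho(w)=2(w,h)h-w$, equivalently $\rho(w)=w_h h-w_\perp$; that is, $\rho$ is $+1$ on $\IR h$ and $-1$ on the orthogonal complement $h^\perp$. Then $\rho$ is an orthogonal transformation, hence an isometry of $E$, and $\rho(h)=h$. Since $x\preceq_f y$ depends on $y-x$ only through $\|(y-x)_\perp\|$ and $(y-x)_h$, both of which $\rho$ leaves unchanged for every vector (it only flips the sign of the perpendicular part), $\rho$ preserves $\preceq_f$, and therefore it preserves joins and meets — exactly the line of reasoning already used for \cref{thm:join_in_subspace}. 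Finally, the hypotheses $x_h=y_h$ and $x_\perp=-y_\perp$ say precisely that $\rho(x)=y$ and $\rho(y)=x$, so $\rho$ interchanges the two points and fixes the set $\{x,y\}$.

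Next I would use uniqueness of joins. If $z$ is the join of $\{x,y\}$, then $\rho(z)$ is the join of $\rho(\{x,y\})=\{x,y\}$; since a set has at most one join, $\rho(z)=z$. But $\rho(z)=z_h h-z_\perp$, so $\rho(z)=z$ forces $z_\perp=0$, i.e.\ $z$ lies on the axis $\IR h$: $z=z_h h$ for some $z_h\in\IR$.

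It then remains to pin down $z_h$. First I would check that $w:=\bigl(x_h+f(\|x_\perp\|)\bigr)h$ is a right bound of $\{x,y\}$: since $w_\perp=0$ we get $f(\|w_\perp-x_\perp\|)=f(\|x_\perp\|)=w_h-x_h$, so $x\preceq_f w$, and likewise $y\preceq_f w$ because $\|y_\perp\|=\|x_\perp\|$ and $y_h=x_h$. Now $z$ is itself a right bound of $x$ lying on the axis, so $f(\|x_\perp\|)\le z_h-x_h$, giving $z_h\ge x_h+f(\|x_\perp\|)=w_h$. On the other hand, $z$ being the join and $w$ a right bound gives $z\preceq_f w$, and since $z_\perp=w_\perp=0$ this reads $0=f(0)\le w_h-z_h$, i.e.\ $z_h\le w_h$. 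Hence $z=w=\bigl(x_h+f(\|x_\perp\|)\bigr)h$, as claimed. (Equivalently, the last two inequalities can be packaged by invoking \cref{thm:joinIsLowestPoint}: the join is the lowest right bound with respect to $h$, and among right bounds lying on the axis the lowest is exactly $w$.)

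There is really no hard step here; the only points requiring care are (i) verifying that $\rho$ genuinely preserves the orientation, which hinges on the rotational symmetry of $C_f$ — membership in $C_f$ only sees the \emph{norm} of the perpendicular component — and (ii) remembering that determining $z_h$ uses both that $z$ is a right bound (yielding $z_h\ge w_h$) and that it is the \emph{least} right bound (yielding the reverse inequality), not just one of the two.
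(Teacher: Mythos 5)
Your proof is correct and follows essentially the same route as the paper: exploit the reflection symmetry about the axis $\IR h$ to force the join onto that axis, then identify it as the lowest right bound there. If anything, your version is slightly more explicit than the paper's, since you write down the reflection $\rho$ and invoke uniqueness of joins directly (the paper appeals to \cref{thm:join_in_subspace} and a brief ``two equally valid candidates'' symmetry argument before applying \cref{thm:joinIsLowestPoint}), but the underlying argument is the same.
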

\begin{proof}
  By \cref {thm:join_in_subspace}, if $x$ and $y$ have a join, it is an element of the subspace
  spanned by $x$, $y$ and $h$. We can also see that the join of $x$ and $y$ has to be a multiple of h.
  This is trivially true if $\dim(E)=1$. For $\dim(E)\ge2$ it is also true, as otherwise the symmetry of the problem would imply the existence of two equally valid candidates, contradicting \cref{thm:joinIsLowestPoint}.
  For real $t$, the multiple $t\, h$ is a right bound of $\{x,y\}$ if and
  only if $t \ge x_h+f(\|x_\perp\|)= y_h+f(\|y_\perp\|)$.  Therefore, by \cref
  {thm:joinIsLowestPoint}, the join, if it exists, is indeed equal to $(x_h+f(x)) h$.
\end{proof}

\subsection{Properties of epigraph sponges}
Having looked at some of the properties of the oriented group $(E,\preceq_f)$, we now consider what happens if it is in fact a sponge.

\begin{lemma}\label{thm:maxSuper}
Assume that $(E, \preceq_f)$ is a sponge. Then $f(d) + f(e) \le \max(f(d+e), f(|d-e|))$ for all $d, e \in \IR_{\ge0}$. If $\dim(E)\ge3$, then $f$ is square-superadditive.
\end{lemma}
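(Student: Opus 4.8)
The plan is to leverage two facts already established about the oriented group: \cref{thm:join_symmetry}, which identifies the join of a symmetric pair explicitly, and the defining property of a join (equivalently \cref{thm:joinIsLowestPoint}), which forces that join to lie $\preceq_f$ every right bound of the pair. The whole proof consists of feeding one cleverly chosen auxiliary right bound into this machinery; the horizontal (i.e. $\perp h$) displacement between the join and that right bound is arranged to be exactly the quantity at which we want $f$ evaluated.

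Concretely, pick a unit vector $u$ orthogonal to $h$ (possible since $\dim(E)\ge2$). Given $d,e\in\IR_{\ge0}$, let $P=\{e u,-e u\}$. This set is finite, hence right-bounded by \cref{thm:finite_has_right_bounds}, so it has a join in the sponge, and by \cref{thm:join_symmetry} that join is $z=f(e)\,h$. Now set $c=\max\!\big(f(|d-e|),f(d+e)\big)$ and consider $w=d u+c\,h$. Since $(e u)_\perp=e u$ and $(-e u)_\perp=-e u$ while $w_\perp=d u$, one has $\|w_\perp-(e u)_\perp\|=|d-e|$ and $\|w_\perp+(e u)_\perp\|=d+e$, so $w$ is a right bound of $P$ (the value $c$ dominates both $f(|d-e|)$ and $f(d+e)$). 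As $z$ is the join of $P$, we get $z\preceq_f w$, which unpacks to $f(\|w_\perp\|)=f(d)\le w_h-z_h=c-f(e)$; that is, $f(d)+f(e)\le\max\!\big(f(d+e),f(|d-e|)\big)$. This is the first claim.

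For the second claim, assume $\dim(E)\ge3$ and choose a further unit vector $v$ orthogonal to both $h$ and $u$. Keep $P=\{e u,-e u\}$ with its join $z=f(e)\,h$, but now take $w=d v+f(\sqrt{d^2+e^2})\,h$. Because $v\perp u$, we get $\|w_\perp-(\pm e u)_\perp\|=\|d v\mp e u\|=\sqrt{d^2+e^2}$, so $w$ is again a right bound of $P$ (here $w_h=f(\sqrt{d^2+e^2})$ meets both constraints with equality). Then $z\preceq_f w$ gives $f(d)=f(\|w_\perp\|)\le w_h-z_h=f(\sqrt{d^2+e^2})-f(e)$, i.e. $f(\sqrt{d^2+e^2})\ge f(d)+f(e)$ for all $d,e\in\IR_{\ge0}$, which is precisely square-superadditivity.

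The only genuinely creative step is the choice of the auxiliary right bound $w$: its horizontal part must be positioned so that its distances to the two points $e u$ and $-e u$ both collapse to the single number we want inside $f$ — $|d-e|$ and $d+e$ when $w_\perp$ is parallel to $u$, and $\sqrt{d^2+e^2}$ when $w_\perp$ is pushed into a direction orthogonal to $u$, which is exactly why the stronger conclusion needs the extra dimension. Everything else is routine: existence of the join from \cref{thm:finite_has_right_bounds}, its location from \cref{thm:join_symmetry}, and the final inequalities from the minimality of the join.
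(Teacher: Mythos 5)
Your proof is correct and follows essentially the same route as the paper: form the symmetric pair $\{e\,u,-e\,u\}$, identify its join as $f(e)\,h$ via \cref{thm:finite_has_right_bounds} and \cref{thm:join_symmetry}, and compare it with a test right bound whose horizontal part is $d\,u$ (resp.\ $d\,v$ in the $\dim(E)\ge3$ case). The only cosmetic differences are that you swap the roles of $d$ and $e$ and plug in the single minimal height $c$ directly, where the paper quantifies over all heights $a$ and reads off the same inequality.
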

\begin{proof} As $\dim(E)\ge 2$, we can choose a unit vector $u\in E$,
  orthogonal to $h$.   Let $d$ and $e$ be given. 
  Consider the doubleton set $P = \{d\, u, -d\, u\}$ in $E$.  By
  \cref {thm:finite_has_right_bounds}, the set $P$ has a right
  bound.  As $(E, \preceq_f)$ is a sponge, $P$ has a join. By \cref
  {thm:join_symmetry}, the join is $f(d)\,h$.  This implies
  that
\begin{equation} \label{join2}
\forall w\in E: \quad d\, u \preceq_f w \Land -d\, u \preceq_f w \implies f(d) \, h\preceq_f w.
\end{equation}
Applying \cref {join2} to $w = e\, u + a\, h $ (for
arbitrary $a\in \IR$), we observe:
\begin{align*}
&\forall a: \quad d\, u \preceq_f e\, u + a\, h 
\Land -d\, u \preceq_f e\, u + a\, h \implies f(d)\, h \preceq_f e\, u + a\, h \\
\equiv\quad&\forall a:\quad f(|d-e|) \le a \Land  f(d+e) \le a 
\implies f(e) \le a - f(d) \\
\equiv\quad& \forall a:\quad \max(f(d+e), f(|d-e|)) \le a \implies f(d) + f(e) \le a\\
\equiv\quad& f(d) + f(e) \le \max(f(d+e), f(|d-e|)) .
\end{align*}
This concludes the first part of the proof.

Now, if $\dim (E) \ge 3$, we can choose a unit vector $v$ orthogonal
to both $h$ and $u$. \Cref {join2} is now applied to $w =
e\,v + a\,h $ for arbitrary $a\in \IR$, and we have
\begin{align*}
&\forall a:\quad d\, u \preceq_f e\, v + a\, h 
  \Land -d\, u \preceq_f e\, v + a\, h\implies f(d) \, h \preceq_f e\, v + a\, h\\
\equiv\quad&\forall a: \quad f(\sqrt{d^2 + e^2}) \le a \implies f(e) \le a - f(d)\\
\equiv\quad&f(d) + f(e) \le f(\sqrt{d^2 + e^2}) 
.\end{align*}
This proves that $f$ is square-superadditive. 
This concludes the proof.
\end{proof}

\begin{proposition}\label{thm:ascending_implies_increasing}
Assume that $(E, \preceq_f)$ is a sponge. Then $f$ is ascending if and only if it is increasing.
\end{proposition}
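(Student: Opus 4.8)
The plan is to prove the two implications separately, with the forward direction being the only one carrying content. If $f$ is increasing (strictly), then it is in particular ascending (non-decreasing), so that implication needs nothing more than a sentence pointing at the definitions.

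For the converse, suppose $f$ is ascending. The key observation is that \cref{thm:maxSuper} already gives us everything: since $(E,\preceq_f)$ is a sponge, we have $f(d)+f(e)\le\max\!\big(f(d+e),f(|d-e|)\big)$ for all $d,e\in\IR_{\ge0}$. Because $|d-e|\le d+e$ and $f$ is ascending, $f(|d-e|)\le f(d+e)$, so the maximum on the right is simply $f(d+e)$. Hence $f(d)+f(e)\le f(d+e)$, i.e.\ $f$ is superadditive. Now I invoke the remark made just before \cref{thm:epigraph_sponges}: a superadditive $f:\IR_{\ge0}\to\IR_{\ge0}$ that is positive on all nonzero arguments is increasing. (If one wants to spell it out: for $0\le d<e$, writing $e=d+(e-d)$ yields $f(e)\ge f(d)+f(e-d)>f(d)$ since $e-d>0$.) This establishes the forward direction and completes the proposition.

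There is essentially no obstacle here; the only thing to get right is the observation that, once $f$ is known to be ascending, the ``$\max$'' in \cref{thm:maxSuper} is redundant and the inequality degenerates to ordinary superadditivity. It is worth noting in passing that \cref{thm:maxSuper}'s first conclusion only uses the standing hypothesis $\dim(E)\ge2$, so no case distinction on the dimension of $E$ is needed.
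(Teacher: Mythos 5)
Your proof is correct, but it takes a genuinely different route from the paper's. The paper argues by contradiction: if $f$ were ascending but not increasing, it would be constant, with value $w$ say, on some interval $[d_1,d_2]$; choosing $0<e<\frac{1}{2}(d_2-d_1)$ and applying \cref{thm:maxSuper} to the pair $d_2-e$, $e$ gives $w<f(d_2-e)+f(e)\le\max(f(d_2),f(d_2-2e))=w$, a contradiction (using $f(e)>0$ from the standing convention). You instead note that once $f$ is ascending, the maximum in \cref{thm:maxSuper} collapses to $f(d+e)$ because $|d-e|\le d+e$, so $f$ is superadditive, and then superadditivity together with $f(t)>0$ for $t>0$ yields strict monotonicity via $f(e)\ge f(d)+f(e-d)>f(d)$ for $0\le d<e$. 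Both arguments are sound and rest only on \cref{thm:maxSuper} and the convention $f(d)>0$ for $d>0$, so there is no circularity; your version is arguably more economical, since it establishes the stronger fact that ascending implies superadditive en route, which the paper only extracts later, inside the proof of \cref{thm:equivalentProperties}, using exactly your observation that the maximum is redundant. Your closing remark is also right: the first conclusion of \cref{thm:maxSuper} needs only the standing assumption $\dim(E)\ge2$, so no case distinction on the dimension is required.
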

\begin{proof}

If $f$ is ascending and not increasing, then there is some interval $[d_1,d_2]$ over which $f$ has a constant value, say $w$.
Choose $e$ with $0<e<\frac{1}{2}(d_2-d_1)$. Then $f(e)>0$ and $f(d_2)=f(d_2-e)=f(d_2-2e)=w$, so that \cref{thm:maxSuper} applied to $d_2-e$ and $e$ gives \begin{equation*}w<f(d_2-e)+f(e)\le\max(f(d_2),f(d_2-2e))=w.\end{equation*} This is clearly a contradiction, so we conclude that if $f$ is ascending, it must also be increasing.
\end{proof}

Recall that $f$ is lower semicontinuous iff, for every $d$,
$a\in\IR_{\ge0}$ with $f(d) > a$, there exists $\eps > 0$ such that for all
$e\in\IR_{\ge0}$ with $|e-d| < \eps$ it holds that $f(e) > a$.
Now, assume that $f$ is ascending.
Then all discontinuities of $f$ are ``of the first kind'' (jump discontinuities) \cite[Corollary to Thm.\@ 4.29]{Rudin1976PMA}.
That is, even if $f$ is discontinuous in $d$, the limits $f^-(d)=\lim_{e\uparrow d}f(e)$ and $f^+(d)=\lim_{e\downarrow d}f(e)$ exist, and $f^-(d)\le f(d)\le f^+(d)$.
Function $f$ is lower semicontinuous if and only if $f^-(d)=f(d)$ for all $d$.

\begin{lemma}\label{thm:ImpliesLowerSemicontinuous}
Assume that $(E, \preceq_f)$ is a sponge. Then $f$ is ascending if and only if it is lower semicontinuous.
\end{lemma}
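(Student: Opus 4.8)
The plan is to establish the two implications separately. The direction ``lower semicontinuous $\implies$ ascending'' is the easier one: since $(E,\preceq_f)$ is a sponge, \cref{thm:maxSuper} already gives $f(d)+f(e)\le\max(f(d+e),f(|d-e|))$ for all $d,e\in\IR_{\ge0}$. I would argue that this inequality alone, together with lower semicontinuity, forces $f$ to be ascending. Suppose not: then there are $d_1<d_2$ with $f(d_1)>f(d_2)$. Apply the inequality with a suitable splitting — e.g. with $d=\tfrac12(d_1+d_2)$ and $e=\tfrac12(d_2-d_1)$ so that $d+e=d_2$ and $|d-e|=d_1$ — to get a relation between $f(d)$, $f(e)$, $f(d_1)$, $f(d_2)$; iterating or combining this with a second application (splitting around a point slightly below $d_1$) should push $f$ down arbitrarily near some value while lower semicontinuity prevents it from dropping, yielding a contradiction. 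Here one uses that lower semicontinuity controls the behaviour of $f$ from below near any point, so any ``dip'' created by the superadditivity-type inequality cannot be realized. (Alternatively, note that \cref{thm:maxSuper} implies $f$ is square-superadditive when $\dim E\ge3$ and superadditive when $\dim E\ge2$, and a superadditive lower semicontinuous nonnegative function on $\IR_{\ge0}$ with $f(0)=0$ is automatically ascending, even increasing by \cref{thm:ascending_implies_increasing} — this handles the case $\dim E\ge2$, which is the standing assumption, cleanly.)

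For the main direction, ``ascending $\implies$ lower semicontinuous'', I would use the discussion immediately preceding the lemma: when $f$ is ascending, all its discontinuities are jump discontinuities, the one-sided limits $f^-(d)$ and $f^+(d)$ exist with $f^-(d)\le f(d)\le f^+(d)$, and $f$ is lower semicontinuous exactly when $f^-(d)=f(d)$ for every $d$. So it suffices to rule out a point $d_0>0$ with $f^-(d_0)<f(d_0)$. The strategy is to exhibit a bounded set whose join is forced (by the sponge property together with the structural propositions \cref{thm:joinIsLowestPoint} and \cref{thm:join_symmetry}) to be a point that cannot exist unless $f^-(d_0)=f(d_0)$.

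Concretely, fix a unit vector $u\perp h$ and consider, for $t<d_0$, the doubleton $P_t=\{t\,u,-t\,u\}$, whose join is $f(t)\,h$ by \cref{thm:join_symmetry}. As $t\uparrow d_0$, these joins converge to $f^-(d_0)\,h$. Now consider the set $P=\{d_0\,u,-d_0\,u\}$ itself: it is bounded by \cref{thm:finite_has_right_bounds}, hence has a join $f(d_0)\,h$ by \cref{thm:join_symmetry}, which by \cref{thm:joinIsLowestPoint} must be the unique lowest right bound of $P$. But if $f^-(d_0)<f(d_0)$, I would derive a contradiction by producing a right bound of $P$ strictly below $f(d_0)\,h$ — intuitively, a limit/common right bound assembled from right bounds of the $P_t$ for $t$ close to $d_0$, exploiting that $\preceq_f$ need not be closed here so we cannot simply pass to the limit, but that the sponge axioms still force consistency. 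Care is needed because $\preceq_f$ being \emph{not} topologically closed is precisely the case where $f$ fails to be lower semicontinuous, so the argument must extract the contradiction from the existence of joins of \emph{infinite} bounded sets (guaranteed since $(E,\preceq_f)$ is a cc sponge, or at least a sponge with all finite sets bounded) rather than from topological closure. The cleanest route: take the infinite set $\tilde P=\{t\,u:t<d_0\}\cup\{-t\,u:t<d_0\}\cup\{d_0\,u,-d_0\,u\}$, observe it is right bounded, let its join be $z$; by \cref{thm:joinIsLowestPoint} and symmetry $z$ is a multiple of $h$, and comparing with the joins of the subsets $P_t$ and with $P$ pins $z$ between $f^-(d_0)\,h$ and $f(d_0)\,h$ in a way that is only consistent if $f^-(d_0)=f(d_0)$.

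The main obstacle I anticipate is exactly this last step: turning the jump $f^-(d_0)<f(d_0)$ into an explicit right bound of $P=\{d_0\,u,-d_0\,u\}$ lying strictly below $f(d_0)\,h$, without the convenience of a closed relation. The delicate point is that a right bound $w$ of $d_0\,u$ must satisfy $f(\|w_\perp-d_0\,u\|)\le w_h-(d_0\,u)_h=w_h$ — the argument $\|w_\perp-d_0\,u\|$ is at least $0$ and we want it to be small, i.e. $w_\perp$ near $d_0\,u$; symmetrically $w_\perp$ near $-d_0\,u$; these are incompatible unless $d_0=0$, so in fact for $d_0>0$ the only right bounds are (as in \cref{thm:join_symmetry}) essentially multiples of $h$ with height $\ge f(d_0)$, and the real work is showing that a height in the open interval $(f^-(d_0),f(d_0))$ \emph{does} bound $P$, which hinges on a limiting/lower-semicontinuity-flavoured estimate on $f$ near $d_0$ from the left. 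Making that estimate rigorous — and verifying it genuinely uses the sponge hypothesis rather than being false for a generic ascending $f$ — is where I expect to spend the most effort.
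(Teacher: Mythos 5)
Your argument for the main direction (ascending $\implies$ lower semicontinuous) does not work, because symmetric configurations cannot detect a jump of $f$. If $f$ is ascending and $w$ is any right bound of $P=\{d_0\,u,-d_0\,u\}$, then $\|w_\perp-d_0\,u\|+\|w_\perp+d_0\,u\|\ge 2d_0$, so at least one of the two arguments is $\ge d_0$ and hence $w_h\ge f(d_0)$; moreover $f(d_0)\,h$ itself is a right bound. So the join of $P$, of each $P_t$, and of your $\tilde P$ exists and equals $f(d_0)\,h$ whether or not $f^-(d_0)=f(d_0)$: the step you yourself flag as ``the real work'' (that some height in the open interval $(f^-(d_0),f(d_0))$ bounds $P$) is simply false, and no contradiction can be extracted from these sets. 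The paper's proof instead uses an \emph{asymmetric} pair, $x=d\,u+f^+(d)\,h$ and $y=-d\,u+f^-(d)\,h$: a right bound at horizontal offset $e$ needs height $r_e=\max\bigl(f(|d-e|)+f^+(d),\,f(d+e)+f^-(d)\bigr)$, and one checks $r_e>f^+(d)+f^-(d)$ for every $e\ne0$ while $r_e\to f^+(d)+f^-(d)$ as $e\downarrow0$. Since the join must be the lowest right bound and must lie in the plane spanned by $u,h$ (\cref{thm:join_in_subspace}, \cref{thm:joinIsLowestPoint}), the minimum of $r_e$ must be attained, which can only happen at $e=0$, where the required height is $f(d)+f^+(d)$; this forces $f(d)=f^-(d)$.

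The converse direction is also not in order. Your parenthetical shortcut misquotes \cref{thm:maxSuper}: it yields square-superadditivity only when $\dim(E)\ge3$; for $\dim(E)=2$ it gives only $f(d)+f(e)\le\max\bigl(f(d+e),f(|d-e|)\bigr)$, and replacing the maximum by $f(d+e)$ (superadditivity) is exactly equivalent to $f$ being ascending --- the very thing to be proved --- so the shortcut is circular precisely in the only nontrivial case (for $\dim(E)\ge3$ the whole lemma is automatic). Your primary sketch (``iterate \dots push $f$ down \dots lower semicontinuity prevents it from dropping'') is not yet an argument. The paper's proof is short and concrete: if $f(v)<f(u)$ with $u<v$, lower semicontinuity makes $G=\{d\mid f(d)\le f(v)\}$ closed; let $g_0$ be the largest element of $G\cap[0,u]$ and $g_1$ the smallest element of $G\cap[u,\infty)$, and set $d=\frac{1}{2}(g_0+g_1)$, $e=\frac{1}{2}(g_1-g_0)$; then $\max\bigl(f(d+e),f(d-e)\bigr)=\max\bigl(f(g_1),f(g_0)\bigr)\le f(v)<f(d)\le f(d)+f(e)$, contradicting \cref{thm:maxSuper}. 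Both halves of your proposal therefore need to be replaced along these lines.
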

\begin{proof}

First assume that $f$ is ascending. Let $d$ be an argument where $f$ is not continuous. As before, choose a unit vector $u$ orthogonal to $h$. Let the vectors $x$ and $y$ be given by $x=d\,u+f^+(d)\,h$ and $y=-d\,u+f^-(d)\,h$.
For real numbers $z_h$ and $e$, the vector $z=e\,u+z_h\,h$ is a right bound of $\{x,y\}$ if and only if \begin{equation*}z_h\ge r_e=\max(f(|d-e|)+f^+(d), f(|d+e|)+f^-(d)).\end{equation*}
By \cref{thm:join_in_subspace,thm:joinIsLowestPoint}, the join of $\{x,y\}$ is the lowest such right bound, so it is $z^*=e\,u+r_e\,h$ with $e=\arg\min_e r_e$.
Now note that for $e<0$, $f^+(d)<f(|d-e|)$, so that \begin{gather*}f^+(d)+f^-(d)<2\,f^+(d)<r_e\quad\text{for all $e<0$}.
\intertext{For $e>0$, $f^+(d)<f(|d+e|)$, so that}f^+(d)+f^-(d)<r_e\quad\text{for all $e>0$}.\end{gather*}
Furthermore, for $0<e<2d$, $f(|d-e|)<f^-(d)$, so that $r_e=f(d+e)+f^-(d)$.
It follows that $\lim_{e\downarrow0}=f^+(d)+f^-(d)$.
Clearly, this limit must be the height of the lowest right bound $z^*$ of $\{x,y\}$.
Now, since $r_e=f(d)+f^+(d)$ for $e=0$, the existence of the join implies that $f(d)=f^-(d)$. This proves that $f$ is lower semicontinuous.

  Now, assume that $f$ is lower semicontinuous, and that $f$ is \emph{not ascending}. So, there should be real numbers $u$ and $v$ such that
  $0 \le u < v$ and $f(v) < f(u)$.  As $f$ is lower semicontinuous, the set
  $G=\{ d\in \IR_{\ge0} \mid f(d) \le f(v) \} $ is topologically closed.
  It follows that its subsets $G_0 = \{d\in G \mid d \le u\}$ and
  $G_1 = \{d\in G \mid u \le d\}$ are also closed.  $G_0$ is bounded
  from above by $u$ and $G_1$ is bounded from below by $u$.  Therefore,
  $G_0$ has a greatest element $g_0$, and $G_1$ has a smallest
  element $g_1$.  It is clear that $g_0 < u < g_1$, and that
  $d = \half(g_0+g_1) \notin G$, so that $f(d)>f(v)$. Putting $e = \half(g_1-g_0)$, we have
  $0 < e \le d$ and $ \max(f(d+e), f(d-e))=\max(f(g_1), f(g_0)) \le f(v) < f(d)$.
  This contradicts \cref{thm:maxSuper}, so if $f$ is lower semicontinuous it is also ascending, completing the proof.
\end{proof}

Combining the above lemmas, we find the following:

\begin{corollary}\label{thm:equivalentProperties}\begin{samepage}
Assume that $(E, \preceq_f)$ is a sponge. Then the following are equivalent:
\begin{enumerate}\itemsep0em
\item relation $\preceq_f$ is topologically closed,
\item $f$ is lower semicontinuous,
\item $f$ is ascending,
\item $f$ is increasing,
\item $f$ is superadditive.
\end{enumerate}
If $\dim(E)\ge3$, all of the aforementioned properties hold.\end{samepage}
\end{corollary}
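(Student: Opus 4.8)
The plan is to establish \cref{thm:equivalentProperties} by chaining the lemmas already proved in this subsection, treating the five properties as a cycle of implications, and then handling the final ``dimension $\ge 3$'' clause separately. Throughout, I assume $(E,\preceq_f)$ is a sponge, since that is the standing hypothesis. First I would record the implication (1)$\Leftrightarrow$(2): this is exactly \cref{thm:semicontinuity}, which states that $\preceq_f$ is topologically closed iff $f$ is lower semicontinuous, and that lemma does not even require the sponge hypothesis. Next, (2)$\Leftrightarrow$(3) is precisely \cref{thm:ImpliesLowerSemicontinuous}, which asserts that under the sponge hypothesis, $f$ is ascending iff it is lower semicontinuous. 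Then (3)$\Leftrightarrow$(4) is \cref{thm:ascending_implies_increasing}. So (1), (2), (3), (4) are already known to be mutually equivalent, and the only remaining work is to weld property (5) (superadditivity) into the chain.

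For the superadditivity equivalence, I would argue (4)$\Rightarrow$(5) and (5)$\Rightarrow$(3), which closes the loop. For (5)$\Rightarrow$(3): a superadditive function $f:\IR_{\ge0}\to\IR_{\ge0}$ is automatically ascending because $f(x+y)\ge f(x)+f(y)\ge f(x)$ using nonnegativity of $f$ — this is already remarked in the text just before \cref{thm:epigraph_sponges}, so it can be cited directly. For (4)$\Rightarrow$(5): assume $f$ is increasing. By \cref{thm:maxSuper} (which only needs the sponge hypothesis and $\dim(E)\ge2$), we have $f(d)+f(e)\le\max(f(d+e),f(|d-e|))$ for all $d,e\ge0$. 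Since $|d-e|\le d+e$ and $f$ is increasing (hence in particular ascending), $f(|d-e|)\le f(d+e)$, so the maximum on the right equals $f(d+e)$, giving $f(d)+f(e)\le f(d+e)$, i.e.\ $f$ is superadditive. This is the one genuinely new computation, and it is short; I expect it to be the ``main obstacle'' only in the trivial sense that everything else is a citation.

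Finally, for the last sentence of the corollary: if $\dim(E)\ge3$, then \cref{thm:maxSuper} gives that $f$ is square-superadditive, and the remark preceding \cref{thm:epigraph_sponges} notes that square-superadditivity implies superadditivity (via $\sqrt{d^2+e^2}\le d+e$ together with ascendingness of square-superadditive functions). Hence property (5) holds unconditionally when $\dim(E)\ge3$, and by the equivalence just established so do (1)--(4). I would write this all up as a single short paragraph: cite \cref{thm:semicontinuity}, \cref{thm:ImpliesLowerSemicontinuous}, \cref{thm:ascending_implies_increasing} for the equivalence of (1)--(4); prove (4)$\Rightarrow$(5) in one line using \cref{thm:maxSuper} and monotonicity; cite the preliminary remark for (5)$\Rightarrow$(3); and close with the $\dim(E)\ge3$ case via \cref{thm:maxSuper} again. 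No environment other than \texttt{proof} is needed.
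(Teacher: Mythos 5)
Your proposal is correct and matches the paper's own proof essentially step for step: cite \cref{thm:semicontinuity}, \cref{thm:ImpliesLowerSemicontinuous}, and \cref{thm:ascending_implies_increasing} for the equivalence of (1)--(4), use the remark that superadditivity implies ascendingness for one direction, derive superadditivity from \cref{thm:maxSuper} together with $|d-e|\le d+e$ and monotonicity for the other, and settle the $\dim(E)\ge3$ case via square-superadditivity from \cref{thm:maxSuper}. The only cosmetic difference is that you start the new implication from property (4) while the paper starts from (3), which is immaterial given their equivalence.
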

\begin{proof}
\Cref{thm:semicontinuity} shows that the first two properties are equivalent (even if $(E,\preceq_f)$ is just an orientation).
\Cref{thm:ascending_implies_increasing} shows that in the current context the third and fourth property are equivalent.
\Cref{thm:ImpliesLowerSemicontinuous} shows that the second and third property are equivalent.
We also noted already that if $f$ is superadditive, it is also ascending.
This leaves only one implication to prove: that if $f$ is ascending, it is also superadditive.
  If $f$ is ascending, \cref{thm:maxSuper} now implies that
  $f(d) + f(e) \le f(d+e)$ for all nonnegative reals $d$ and $e$, since those satisfy $|d-e|\le d+e$.
  This implies that $f$ is superadditive.
Finally, when $\dim(E)\ge3$, \cref{thm:maxSuper} tells us that $f$ is square-superadditive, and thus superadditive.
\end{proof}

\subsection{Sufficiency}
The only-if parts of \cref{thm:epigraph_sponges} are now proved as
follows: if $(E,\preceq_f)$ is a sponge and $\dim (E) \ge 3$, then
relation $\preceq_f$ is topologically closed and $f$ is
square-superadditive by \cref{thm:equivalentProperties}. If $(E,\preceq_f)$ is a
sponge, $\dim (E) = 2$, and $\preceq_f$ is topologically closed, then
$f$ is superadditive by \cref{thm:equivalentProperties}.
We now show that we can apply \cref{metric2spongeDual} for the if parts.
Consequently, in this section we will assume that $\preceq_f$ is topologically closed, and that $f$ is square-superadditive (or just superadditive if $\dim(E)=2$).
As a result, we may also assume that $f$ is increasing (since it is superadditive, and we assumed earlier that $f(d)>0$ for all $d>0$), and that $f$ is lower semicontinuous (\cref{thm:semicontinuity}).

\begin{lemma}\label{thm:hDiscriminator}
The covector $h^*:E\rightarrow\IR$, defined by $h^*(x)=(h,x)$, is continuous and a discriminator.
\end{lemma}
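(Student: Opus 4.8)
The plan is to verify the two claims about $h^*$ separately: continuity is immediate, and the discriminator condition follows from the fact that $f$ is increasing and (by superadditivity, since $f(d)>0$ for $d>0$) bounded away from zero outside a neighborhood of the origin.

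\medbreak
First I would note that $h^*(x) = (h,x) = x_h$ is a bounded linear functional on the Hilbert space $E$ (with $\|h\| = 1$), hence continuous. This disposes of the first claim.

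\medbreak
For the discriminator property, fix $\eps > 0$; I need a $\delta > 0$ such that $x \preceq_f y$ and $h^*(y) < h^*(x) + \delta$ imply $\|x - y\| < \eps$. Recall that $x \preceq_f y$ means $f(\|y_\perp - x_\perp\|) \le y_h - x_h$. I would decompose $\|x-y\|^2 = \|x_\perp - y_\perp\|^2 + (x_h - y_h)^2$ and bound each term. Since $x \preceq_f y$ gives $0 \le y_h - x_h$, the assumption $y_h < x_h + \delta$ forces $|x_h - y_h| < \delta$, controlling the second term provided $\delta \le \eps/\sqrt2$. For the first term, the key point is that $f$ is increasing with $f(t) > 0$ for all $t > 0$, so $\inf_{t \ge \eps/\sqrt2} f(t) = f^-(\eps/\sqrt2) \ge f(\eps/(2\sqrt2)) > 0$ by monotonicity; call this positive number $c$. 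Now if $\|x_\perp - y_\perp\| \ge \eps/\sqrt2$, then $f(\|y_\perp - x_\perp\|) \ge c$, so $y_h - x_h \ge c$, contradicting $y_h - x_h < \delta$ as soon as $\delta \le c$. Hence choosing $\delta = \min(\eps/\sqrt2, c)$ with $c = f(\eps/(2\sqrt2)) > 0$ yields both $|x_h - y_h| < \eps/\sqrt2$ and $\|x_\perp - y_\perp\| < \eps/\sqrt2$, whence $\|x-y\| < \eps$.

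\medbreak
The main obstacle — really the only nontrivial point — is ensuring $f$ is bounded below by a positive constant on $[\eps/\sqrt2, \infty)$; this is exactly where we use that $f$ is increasing (established from superadditivity and $f(d)>0$ for $d>0$ in the preamble to this subsection), so that its infimum over $[\eps/\sqrt2,\infty)$ is at least $f$ of any smaller positive argument, which is positive. Everything else is the routine Pythagorean split of $\|x-y\|$.
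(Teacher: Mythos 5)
Your proof is correct and follows essentially the same route as the paper's: pick $\delta$ smaller than both a fixed fraction of $\eps$ and $f$ of a (smaller) fixed fraction of $\eps$, use $f$ increasing and positive to force the perpendicular component of $y-x$ to be small, and bound the $h$-component by $\delta$ (the paper combines the two pieces by the triangle inequality with $\eps/2$ instead of your Pythagorean split with $\eps/\sqrt2$). The only quibble is your identity $\inf_{t\ge\eps/\sqrt2}f(t)=f^-(\eps/\sqrt2)$ — for increasing $f$ this infimum is $f(\eps/\sqrt2)$ itself — but the lower bound $f(\eps/(2\sqrt2))>0$ that you actually use is valid, so the argument stands.
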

\begin{proof}
Being a linear bounded functional, $h^*$ is continuous \cite[Thm.\@ 1.18]{Rudin1991FA}.
To see that it is also a discriminator, consider $\eps>0$ to be given.
We can now pick a $\delta>0$ that is both less than $\half\eps$ and less than $f(\half\eps)$.
Clearly, recalling that $f$ is increasing, any element $x\in C_f$ for which $h^*(x)=x_h<\delta$ satisfies $\|x\|<\half\eps+\half\eps=\eps$.
Since $y_h-x_h<\delta\equiv h^*(y)<h^*(x)+\delta$, it follows that for any two elements $x,y\in E$, $x\preceq y$ and $h^*(y)<h^*(x)+\delta$ imply that $d(x,y)=\|y-x\|<\eps$.
\end{proof}

Recall that function $f$ is lower semicontinuous and increasing. It is easy to see that
$f(x) \le f^+(x) < f(y)$ whenever $0 \le x < y$.  We also have 
\begin{equation} \label {superplus}
f^+(x) + f(y) \le f(x+y) \text{ \ whenever } x\ge 0\text{ and }y > 0.
\end{equation}
This follows from the fact that $f$ is superadditive, increasing, and lower semicontinuous, as well as the fact that $f(x+\eps) + f(y-\eps) \le f(x+y)$ holds for all $\eps$ with $0 < \eps < y$.

\begin{proposition} \label {norms_superplus}  Let $V$ be a Hilbert space, and let
  $p$, $q\in V$ be such that $(p, q) \ge 0$ and $q \ne 0$.
Assume that $f$ is square-superadditive, or that $f$ is super-additive
and $\dim(V) = 1$.  Then $f^+(\|p\|) + f(\|q\|) \le f(\|p+q\|)$. 
\end{proposition}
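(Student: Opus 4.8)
The plan is to treat the two alternative hypotheses separately, in each case first bounding $\|p+q\|$ from below and then invoking (super)additivity of $f$ together with the monotonicity and lower semicontinuity of $f$ that we have assumed throughout this subsection.

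I would first dispatch the case $\dim(V)=1$ with $f$ superadditive. Since $(p,q)\ge 0$ and $V$ is one-dimensional, the vectors $p$ and $q$ are non-negative multiples of a single unit vector (or $p=0$), so $\|p+q\|=\|p\|+\|q\|$. As $q\ne 0$ we have $\|q\|>0$, and hence the desired inequality $f^{+}(\|p\|)+f(\|q\|)\le f(\|p\|+\|q\|)$ is precisely \cref{superplus}.

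For the case that $f$ is square-superadditive (arbitrary $\dim(V)$), write $a=\|p\|$ and $b=\|q\|$, so $b>0$. From $(p,q)\ge 0$ we get $\|p+q\|^{2}=a^{2}+2(p,q)+b^{2}\ge a^{2}+b^{2}$, and since $f$ is increasing, $f(\|p+q\|)\ge f(\sqrt{a^{2}+b^{2}})$; it therefore suffices to prove $f(\sqrt{a^{2}+b^{2}})\ge f^{+}(a)+f(b)$. For each $t$ with $0<t<b$, apply square-superadditivity to the pair $x=\sqrt{a^{2}+t^{2}}$, $y=\sqrt{b^{2}-t^{2}}$, which satisfies $x^{2}+y^{2}=a^{2}+b^{2}$, to get $f(\sqrt{a^{2}+b^{2}})\ge f(\sqrt{a^{2}+t^{2}})+f(\sqrt{b^{2}-t^{2}})$. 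Now let $t\downarrow 0$: as $\sqrt{a^{2}+t^{2}}\downarrow a$ and $f$ is monotone, $f(\sqrt{a^{2}+t^{2}})\to f^{+}(a)$; and as $\sqrt{b^{2}-t^{2}}\uparrow b$, $f(\sqrt{b^{2}-t^{2}})\to f^{-}(b)$, which equals $f(b)$ because $f$ is lower semicontinuous. Since the displayed inequality holds for every such $t$ and its right-hand side has a limit, we obtain $f(\sqrt{a^{2}+b^{2}})\ge f^{+}(a)+f(b)$, completing this case (the degenerate subcase $p=0$ is covered without change).

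The step I expect to be the crux is upgrading the plain square-superadditivity bound $f(\sqrt{a^{2}+b^{2}})\ge f(a)+f(b)$ to one with $f^{+}(a)$ in place of $f(a)$; this is exactly what the limiting argument achieves. It is essential that the perturbation pushes the first argument down toward $a$, so that its $f$-values converge to the right limit $f^{+}(a)$, while it pushes the second argument up toward $b$ from below, since only there does lower semicontinuity guarantee $f^{-}(b)=f(b)$ and hence that no value is lost on the $\|q\|$ side. A perturbation in the opposite direction for the $b$-argument would risk crossing a jump of $f$ and would not deliver the claimed inequality.
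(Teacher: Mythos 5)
Your proof is correct and takes essentially the same route as the paper: the one-dimensional case is identical, and your perturbation-and-limit step (applying square-superadditivity to $\sqrt{a^2+t^2}$, $\sqrt{b^2-t^2}$ and letting $t\downarrow 0$, using monotonicity for the $f^+$ limit and lower semicontinuity for $f^-(b)=f(b)$) is exactly the content of \cref{superplus} applied to $\phi(x)=f(\sqrt{x})$, which is how the paper phrases it before invoking monotonicity of $\phi$ and $(p,q)\ge 0$.
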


\begin{proof} 
  First assume that $f$ is square-superadditive.  Let $\phi$ be the
  superadditive function given by $\phi(x) = f(\sqrt{x})$.  \Cref {superplus} implies that
  \begin{equation*} f^+(\|p\|) + f(\|q\|) = \phi^+(\|p\|^2) + \phi(\|q\|^2) \le
  \phi(\|p\|^2 + \|q\|^2) .\end{equation*}
  On the other hand, we
  have $ \phi(\|p\|^2 + \|q\|^2) \le \phi(\|p+q\|^2) = f(\|p+q\|)$, as $(p,q) \ge 0$, and $\phi$ is increasing.
  
  If $\dim(V) = 1$, we have $\|p+q\| = \|p\| + \|q\|$ because
  $(p,q) \ge 0$.  If, moreover, $f$ is superadditive, then \cref {superplus} gives
  $f^+(\|p\|) + f(\|q\|) \le f(\|p\|+\|q\|) = f(\|p+q\|)$.
\end{proof}

\begin{lemma}\label{thm:everyPairAJoin}
  Every pair of elements of $E$ has a join in $E$.
\end{lemma}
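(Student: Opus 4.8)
The plan is to exhibit the join of a pair $\{x,y\}$ explicitly as the lowest right bound and then check by hand that it dominates \emph{every} right bound. By \cref{thm:join_in_subspace} the join, if it exists, lies in $V=\mathrm{span}\{x,y,h\}$, so I would work in the finite-dimensional space $W=V\cap h^\perp$. For $z\in E$ one has $z\in R(\{x,y\})$ iff $z_h\ge\max\bigl(x_h+f(\|z_\perp-x_\perp\|),\,y_h+f(\|z_\perp-y_\perp\|)\bigr)$; write $G:W\to\IR$ for this maximum, viewed as a function of the perpendicular component, which we may take in $W$ since orthogonal projection onto $W$ fixes $z_h$ and does not increase $\|z_\perp-x_\perp\|$ or $\|z_\perp-y_\perp\|$. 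Since a positive superadditive $f$ satisfies $f(2d)\ge 2f(d)$ and is therefore unbounded, $G$ is coercive, and since $f$ is lower semicontinuous so is $G$; hence $G$ attains its minimum on $W$. Orthogonal projection onto the segment $[x_\perp,y_\perp]$ does not increase the distance to $x_\perp$ or to $y_\perp$, hence does not increase $G$, so the minimum is attained at some $z^*_\perp\in[x_\perp,y_\perp]$. Setting $H=G(z^*_\perp)$ and $z^*=z^*_\perp+Hh$, we have $z^*\in R(\{x,y\})$, and it remains to prove $z^*\preceq_f q$ for every $q\in R(\{x,y\})$, which makes $z^*$ the join.

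If $z^*_\perp=x_\perp$ I would first show $H=x_h$ (equivalently $y\preceq_f x$ and $z^*=x$): otherwise $y_h+f(\|x_\perp-y_\perp\|)>x_h$, and sliding from $x_\perp$ slightly towards $y_\perp$ strictly decreases the $y$-term of $G$ (as $f$ is strictly increasing), while the $x$-term tends to $x_h<H$ because $f^+(0)=0$ (a consequence of superadditivity); so $G$ drops below $H$, a contradiction. Then $z^*=x$ and every right bound of $\{x,y\}$ dominates it; $z^*_\perp=y_\perp$ is symmetric. For the main case, $z^*_\perp$ strictly between $x_\perp$ and $y_\perp$, I would extract from minimality that $x_h+f^+(\|z^*_\perp-x_\perp\|)\ge H$ and $y_h+f^+(\|z^*_\perp-y_\perp\|)\ge H$: on the segment $G$ is the pointwise maximum of a strictly increasing and a strictly decreasing function, and if at $z^*_\perp$ the maximum were realized strictly by one term, then a one-sided perturbation --- whose effect on the \emph{other} term is read off as the corresponding $f^+$ value, since $f$ is monotone --- would push $G$ below $H$ unless that $f^+$-inequality held; if both terms realize $H$ the inequalities hold trivially.

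To finish the main case, let $q\in R(\{x,y\})$ and write $q_\perp=\zeta+\eta$ with $\zeta=\mathrm{proj}_W(q_\perp)$ and $\eta\perp W$, so $\|q_\perp-a\|^2=\|\zeta-a\|^2+\|\eta\|^2$ for $a\in\{x_\perp,y_\perp,z^*_\perp\}$. The vectors $z^*_\perp-x_\perp$ and $z^*_\perp-y_\perp$ are nonzero and antiparallel, so at least one of $(z^*_\perp-x_\perp,\zeta-z^*_\perp)\ge0$ and $(z^*_\perp-y_\perp,\zeta-z^*_\perp)\ge0$ holds; say the first. Put $p=z^*_\perp-x_\perp$ and $q'=(\zeta-z^*_\perp)+\eta$; if $q'=0$ then $q_\perp=z^*_\perp$ and $z^*\preceq_f q$ is immediate from $q_h\ge H$, so assume $q'\ne0$. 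Then $(p,q')=(z^*_\perp-x_\perp,\zeta-z^*_\perp)\ge0$, $\|q'\|=\|q_\perp-z^*_\perp\|$ and $\|p+q'\|=\|q_\perp-x_\perp\|$, so \cref{norms_superplus} gives $f^+(\|z^*_\perp-x_\perp\|)+f(\|q_\perp-z^*_\perp\|)\le f(\|q_\perp-x_\perp\|)$; combining this with $x_h+f^+(\|z^*_\perp-x_\perp\|)\ge H$ and $q_h\ge x_h+f(\|q_\perp-x_\perp\|)$ yields $f(\|q_\perp-z^*_\perp\|)+H\le q_h$, i.e.\ $z^*\preceq_f q$. When $\dim E\ge3$, \cref{norms_superplus} is applied via square-superadditivity; when $\dim E=2$, $W$ is one-dimensional, $\eta=0$, and it is applied in its $\dim=1$, merely-superadditive form.

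I expect the main obstacle to be the absence of upper semicontinuity of $f$: $f$ may have jump discontinuities, so the minimizing $z^*_\perp$ can sit exactly at a jump of one of the two terms of $G$, and the tempting shortcut ``one term equals $H$ with a matching monotonicity direction'' fails. This is precisely why \cref{norms_superplus} carries $f^+$ on the left rather than $f$, and why the minimality analysis must be phrased through the one-sided limits $f^+$ (together with $f^+(0)=0$ for the endpoint bookkeeping). The remaining ingredients --- the reduction to the finite-dimensional $V$, coercivity and lower semicontinuity producing a minimizer, and the Pythagorean handling of the out-of-$W$ component $\eta$ --- are routine.
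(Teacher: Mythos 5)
Your proposal is correct and follows essentially the same route as the paper's proof: locate the lowest right bound over the relevant cross-section (you via lower semicontinuity and coercivity of $G$, the paper via closedness of the set of right bounds intersected with a compact rectangle), extract the one-sided minimality inequalities involving $f^+$, and then dominate an arbitrary right bound through \cref{norms_superplus} with the sign split on the inner product and a Pythagorean treatment of the component outside the working subspace. The only point worth flagging is that the paper disposes of comparable pairs at the outset, whereas your endpoint analysis handles them implicitly; the degenerate case $x_\perp=y_\perp$ (where there is no room to slide towards $y_\perp$) should be set aside separately as trivial.
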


\begin {proof} 
  If $x$ and $y$ are comparable by $\preceq_f$, one of them is their
  join.  We may therefore assume that they are not comparable.
  Therefore, the difference vector $x-y$ is not a multiple of $h$.  We
  may translate the origin in the hyperplane $h^\perp$ of vectors orthogonal to $h$ to the point
  $\half(x_\perp+y_\perp)$, and thus assume that $x_\perp = -y_\perp\ne 0$.

  Let $e$ be the unit vector $\|y_\perp\|^{-1}\,y_\perp $.  Let $S$ be
  the linear subspace spanned by $h$ and $e$.  This subspace contains
  $x$ and $y$.  The vectors $h$ and $e$ form an orthonormal basis of
  it.  We abbreviate the inner products with $h$ and $e$ by
  $u_h = (u, h)$ and $u_e = (u, e)$.  Note that $x_e < 0 < y_e$.

  Let $U$ be the set of right-bounds of $\{x, y\}$, so that $U = (x +
  C_f)\cap (y + C_f)$.  As $x$ and $y$ are not comparable, we have
  $x\notin U$ and $y\notin U$.  Given the assumptions made at the start of this section, the set $U$ is topologically closed.
We observe that
\begin{align}
\notag  &\quad u\in U \\
\notag  \eq&\quad x\preceq_f u \Land y\preceq_f u \\
\notag  \eq&\quad f(\|u_\perp-x_\perp\|) \le u_h - x_h \Land  f(\|u_\perp-y_\perp\|) \le u_h - y_h \\
\label{U_equatN}  \eq&\quad \max(x_h + f(\|u_\perp-x_\perp\|) , y_h + f(\|u_\perp-y_\perp\|) ) \le u_h .
\end{align}
In particular, for $u \in S$, we have 
\begin{equation*}
 u\in U \EQ \max(x_h + f(|u_e-x_e|) , y_h
+f(|u_e - y_e|) ) \le u_h .
\end{equation*}
Because $y\notin U$, the lowest point of $U\cap S$ above $y$ is
\begin{equation*}y' = y_e\, e + y_h' \, h\text{, where } y_h' = x_h+f(y_e-x_e) > y_h.\end{equation*}
Let $S'$ be the rectangle of the points $z\in S$ with
$x_e \le z_e\le y_e$ and $y_h \le z_h \le y'_h$.  As $S'$ is
compact and $U$ is closed, the intersection $U\cap S'$ is compact.  It
is nonempty because $y' \in U\cap S'$.  Therefore, there is
$z\in U\cap S' $ with $z_h \le u_h$ for all $u\in U\cap S'$.
We claim that $z_h \le u_h$ for all $u\in U\cap S$; it suffices to
consider $u\in U\cap S\setminus S'$. In that case, if $x_e \le u_e \le y_e $,
then $z_h \le y_h' < u_h$. If $y_e < u_e$, then $z_h \le y_h' < u_h$
because $f$ is increasing.  The case $u_e < x_e$ is treated in the
same way.  This proves that $z_h \le u_h$ for all $u\in U\cap S$.

As $z$ is a lowest point of $U\cap S$ and $x_e \le z_e \le y_e$,
we have
\begin{equation*} 
 z_h = \max(x_h + f(z_e - x_e), y_h + f(y_e - z_e)) .
\end{equation*}
At first sight, one might expect the two terms of the maximum to be
equal, but this need not be the case because of the semicontinuity of
$f$.  Instead, we claim that
\begin{equation}  \label {leq_f^+}
 z_h \le x_h + f^+(z_e - x_e)
\Land z_h \le y_h + f^+(y_e - z_e) .
\end{equation}
The lefthand inequality is treated first.  If $z_e = y_e$, then
$z = y'$ and $z_h = x_h+f(y_e-x_e)$, which is less than $x_h + f^+(y_e-x_e)$.
Otherwise, it holds that $x_e \le z_e < y_e$.  Assume that
$x_h + f^+(z_e - x_e) < z_h$. Then there is a real number $t$ with
$ z_e < t < y_e $ and $s = x_h + f(t - x_e ) < z_h$.  As $f$ is
increasing, we also have $s' = y_h + f(y_e- t) < z_h$.  If we put
$ s'' = \max(s, s') $, the vector $ u = t\, e + s''\, h$
satisfies $u \in U$ and $u_h = s'' < z_h $, contradicting the
minimality of $z_h$.  This proves the lefthand inequality of (\ref
{leq_f^+}).  The other one follows by symmetry.

It remains to prove that every element $u \in U$ is a right-bound of
$z$.  Let $u\in U$ be given.  As we need to compare the vectors
$u_\perp$ and $z_\perp$, we define $q = u_\perp - z_\perp $.  First assume
that $q=0$.  This implies that $u_\perp = z_\perp = z_e\,e$.  It follows
that $u\in U\cap S$, and hence $z_h \le u_h$, and hence
$ z \preceq_f u$.

It remains to assume that $q\ne 0$.  Two cases are distinguished: $q_e
\ge 0$ or $q_e\le 0$.  Assume $q_e \ge 0$.  Put $ p = z_\perp -
x_\perp$.  Then $(p, q) \ge 0$.  We use \cref {norms_superplus}
with $E := h^\perp $, and $p$ and $q$ as chosen just now.  The relation
$z\preceq_f u$ is proved in
\begin{align*}
  & z \preceq_f u \\
  \eq&\text{\Com\ definition $\preceq_f$ \moc}\\
  & f(\|u_\perp - z_\perp\|) \le u_h - z_h \\
  \follows&\text{\Com\ (\ref {U_equatN}) gives $x_h + f(\|u_\perp - x_\perp\|) \le u_h$ \moc}\\
  & z_h + f(\|u_\perp - z_\perp\|) \le x_h + f(\|u_\perp - x_\perp\|) \\
  \eq&\text{\Com\ choices of $p$ and $q$ \moc}\\
  & z_h + f(\|q\|) \le x_h + f(\|q + p\|) \\
  \follows&\text{\Com\ $q\ne 0$, Lemma \ref {norms_superplus} with $E:=h^\perp$, and choice of $p$ \moc}\\
  & z_h \le x_h + f^+(\|z_\perp -x_\perp \|) \\
  \eq&\text{\Com\ $x_\perp = x_e\, e$, $z_\perp = z_e\, e$, and \cref {leq_f^+} \moc}\\
  &\mathrm{true}\ .
\end{align*}
The case $q_e \le 0$ is treated in the same way with
$p = z_\perp - y _\perp $.
\end{proof} 

\medbreak
The if parts of \cref {thm:epigraph_sponges} are now
obtained by collecting the results. 
Assume that $\preceq_f$ is topologically closed, and that $f$ is square-superadditive (superadditive if $\dim(E)=2$).  Then \cref {thm:everyPairAJoin}
implies that every pair has a join in $(E, \preceq_f)$.  As $f$ is
superadditive and satisfies $f(d)>0$ for all $d>0$, it is increasing.
Therefore, \cref{thm:hDiscriminator} implies that $(E, \preceq_f)$ has a discriminator.
Therefore, \cref{metric2spongeDual} implies that $(E, \preceq)$ is a cc sponge.
This concludes the proof of \cref {thm:epigraph_sponges}.

\section {The hyperbolic sponge}  \label {hyp_sponge}
Let $h$ be a unit vector in a real Hilbert space $E$ with $\dim(E)\ge2$.  Let $ H =
h^\perp $ be the hyperplane orthogonal to $h$ and let $H^+ = \{x\in E
\mid 0 < (h,x) \}$ be the (open) half space in direction $h$. We again have $x_h=(h,x)$ and $x_\perp = x -
(h,x)h$.

It is known that $H^+$ can be considered a model for hyperbolic space \cite[\textsection7]{Cannon1997HG}: the Poincar\'e half-space model. In this model, the distance between two points in $H^+$ is
\begin{equation*}
d_\mathcal{H}(x,y)=\arcosh\left(1+\frac{\|x-y\|^2}{2\,x_h\,y_h}\right)
.\end{equation*}
Where $\arcosh(x)=\ln(x+\sqrt{x^2-1})$.
It can be checked \cite[\textsection12.2.1]{Angulo2014MPU} that for two points $x,y$ such that $x_\perp=y_\perp$, $d_\mathcal{H}(x,y)=|\ln(x_h) - \ln(y_h)|$.
Because of this, we put
\begin{equation*}
h_\mathcal{H}(x)=\ln(x_h)
.\end{equation*}
Since $x\in H^+$, $x_h>0$, and the above is well-defined.
Note that while $H^+$ is an open subset of $E$, $(H^+,d_\mathcal{H})$ is, in fact, a complete metric space.
Furthermore, the metric spaces $(H^+,d_\mathcal{H})$ and $(H^+,d_E)$ with $d_E(x,y)=\|x-y\|$ are homeomorphic, as the identity function is a homeomorphism between the two.

\begin{figure}\centering%
\includegraphics{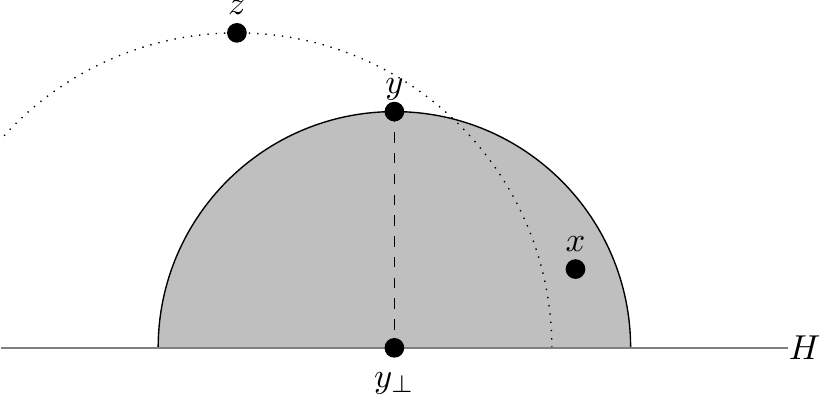}%
\caption{Illustration of the hyperbolic orientation in the Poincar\'e half-plane model. We have $x\preceq y$, as well as $y\preceq z$, but $x$ and $z$ are incomparable. Note how the set of left bounds of $y$ is half of a closed disk in this model, and that only points strictly above $H$ correspond to points in hyperbolic space.}
\label{fig:hyperbolicOrientation}
\end{figure}

Relation $\preceq$ on $H^+$ is defined by 
\begin{equation*}
x\preceq y \EQ \|x-y_\perp\| \le y_h .
\end{equation*}
Note that $x \preceq x$ holds because $ \|x - x_\perp\| = x_h $.
We observe that the above definition corresponds to saying that $x\preceq y$ if and only if $y$ lies between $x$ and the highest point of the geodesic through $x$ and $y$ in the half-space model of hyperbolic space \cite[Thm.\@ 9.3]{Cannon1997HG}; this is the converse of the original formulation \cite[\textsection12.4.4]{Angulo2014MPU}, but for the current exposition it was much more convenient to use the convention that ``higher'' values are larger.

\begin{lemma} \label {reflexU}
  Let $x \preceq y$ and $x\ne y$. Then $ h_\mathcal{H}(x)< h_\mathcal{H}(y) $.
\end{lemma}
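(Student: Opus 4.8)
The plan is to reduce the claim to the numerical inequality $x_h < y_h$ between the $h$-components of $x$ and $y$. Once that is in hand, the conclusion is immediate: $h_\mathcal{H}(x)=\ln(x_h)<\ln(y_h)=h_\mathcal{H}(y)$ because $\ln$ is strictly increasing on $\IR_{>0}$, and $x_h,y_h>0$ since $x,y\in H^+$.

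First I would unwind the definition of $\preceq$. From $x\preceq y$ we have $\|x-y_\perp\|\le y_h$, hence $\|x-y_\perp\|^2\le y_h^2$. Writing $x=x_h h+x_\perp$ and observing that $x_\perp-y_\perp$ is orthogonal to $h$, the Pythagorean identity gives $\|x-y_\perp\|^2=\|x_h h+(x_\perp-y_\perp)\|^2=x_h^2+\|x_\perp-y_\perp\|^2$. Combining these, $x_h^2+\|x_\perp-y_\perp\|^2\le y_h^2$. In particular $x_h^2\le y_h^2$, and since both $x_h$ and $y_h$ are positive this yields $x_h\le y_h$.

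To upgrade this to a strict inequality I would argue by contradiction: suppose $x_h=y_h$. Then the inequality $x_h^2+\|x_\perp-y_\perp\|^2\le y_h^2$ forces $\|x_\perp-y_\perp\|^2\le 0$, so $x_\perp=y_\perp$; together with $x_h=y_h$ this gives $x=y$, contradicting the hypothesis $x\ne y$. Hence $x_h<y_h$, and applying $\ln$ finishes the proof. There is no genuine obstacle here — the only step demanding slight care is extracting the \emph{strict} inequality, which rests precisely on the fact that equality of the $h$-components in the bound would also collapse the perpendicular components.
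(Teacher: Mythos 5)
Your proof is correct and follows essentially the same route as the paper's: both rest on the Pythagorean fact that $\|x-y_\perp\|^2=x_h^2+\|x_\perp-y_\perp\|^2$ (the paper states this as $x_h\le\|x-y_\perp\|$ with equality iff $x_\perp=y_\perp$), combine it with $\|x-y_\perp\|\le y_h$ from $x\preceq_{} y$, and use $x\ne y$ to rule out the equality case before applying the monotonicity of $\ln$. The only difference is cosmetic: you organize the strictness step as a contradiction on $x_h=y_h$, while the paper splits on whether $x_\perp=y_\perp$.
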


\begin{proof}
  As $x_h$ is the height of $x$ above $H$, and $y_\perp\in H$, we have
  $x_h \le \|x-y_\perp\|$, with equality if and only if $x_\perp =
  y_\perp$.  On the other hand, $x\preceq y$ implies $\|x-y_\perp\| \le
  y_h$. Considering that the logarithm is increasing on the positive reals, the inequality follows, unless $x_\perp=y_\perp$.

  Therefore, assume that $x_\perp=y_\perp$.  We then have $x=x_h h+x_\perp$ and $y=y_h h+x_\perp$.
  Given that $x\preceq y$, we see that $\|x-y_\perp\|=x_h\le y_h$.
  As $x\ne y$, we see that $x_h<y_h$, and thus $h_\mathcal{H}(x) < h_\mathcal{H}(y)$.
\end{proof}

\begin{corollary} \label {cor1}
Relation $\preceq$ is an acyclic orientation on $H^+$. 
\end{corollary}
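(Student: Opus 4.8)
The plan is to deduce everything from \cref{reflexU}, which says that a strict step $x\preceq y$ with $x\ne y$ strictly increases the real number $h_\mathcal{H}(x)=\ln(x_h)$. So $h_\mathcal{H}$ serves as a strict potential function for $\preceq$, and acyclicity (hence also antisymmetry) will fall out by chasing inequalities along chains. This is the exact analogue of how the corollary after the norm-monotonicity lemma was obtained for the inner-product sponge.

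Concretely, I would first recall that reflexivity has already been noted: $\|x-x_\perp\|=x_h$ gives $x\preceq x$ for every $x\in H^+$. For antisymmetry, I would assume $x\preceq y$ and $y\preceq x$ with $x\ne y$; applying \cref{reflexU} to both relations yields $h_\mathcal{H}(x)<h_\mathcal{H}(y)$ and $h_\mathcal{H}(y)<h_\mathcal{H}(x)$, a contradiction, so $x=y$. This shows $\preceq$ is an orientation. For acyclicity, I would consider a cycle $x_0\preceq x_1\preceq\cdots\preceq x_{n-1}\preceq x_0$ in which not all of the $x_i$ coincide. Then at least one step, say $x_i\preceq x_{i+1}$, has $x_i\ne x_{i+1}$, while every step satisfies $h_\mathcal{H}(x_j)\le h_\mathcal{H}(x_{j+1})$ (with strict inequality at that step), using \cref{reflexU} on each step and reflexivity where a step is trivial. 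Chaining these around the cycle gives $h_\mathcal{H}(x_0)<h_\mathcal{H}(x_0)$, which is impossible; hence no nontrivial cycle exists.

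There is essentially no obstacle here: the corollary merely packages \cref{reflexU} with the already-observed reflexivity. The only mild point of care is the bookkeeping in the acyclicity step — being explicit that the trivial cycle with all elements equal is permitted by reflexivity and is not what is being contradicted, and that monotonicity of $h_\mathcal{H}$ along a chain accumulates to a strict inequality as soon as one step is strict. Both are routine.
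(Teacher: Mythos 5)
Your proof is correct and matches the paper's intended argument: the paper leaves the corollary unproved precisely because it follows from the strict monotonicity of $h_\mathcal{H}$ established in \cref{reflexU} together with the previously noted reflexivity, exactly as you argue. No gaps.
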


\begin{proposition}  \label {hyper_bounds}
(a) Every finite subset $P$ of $H^+$ has an right bound in $(E,
\preceq)$. \\
(b) If a pair $x$, $y\in H^+$ has a left bound if and only if $\|x_\perp -
y_\perp\| < x_h+y_h$. 
\end{proposition}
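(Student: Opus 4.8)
The plan is to handle the two parts separately, reducing everything to elementary geometry of balls in $E$ via the Pythagorean identity $\|p-y_\perp\|^2 = p_h^2 + \|p_\perp - y_\perp\|^2$, which holds because $y_\perp\in H$ and $p-y_\perp = p_h\,h + (p_\perp-y_\perp)$ with $h$ a unit vector orthogonal to $H$. In particular $p\preceq y$ is nothing but the statement that the point $p$ of $E$ lies in the closed ball $\overline{B}(y_\perp, y_h)$ centred at $y_\perp$ with radius $y_h$.

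For (a), fix an arbitrary point $c\in H$ and set $r = \max_{p\in P}\|p-c\|$. Since $P\subseteq H^+$ is finite and (I assume) nonempty, this maximum exists, and it is positive because $\|p-c\|\ge p_h>0$ for every $p\in P$ (as $c\in H$ and $p\in H^+$). Then $y := r\,h + c$ has $y_\perp = c$ and $y_h = r>0$, so $y\in H^+$, and $\|p - y_\perp\| = \|p-c\| \le r = y_h$ for every $p\in P$; hence $y$ is a right bound of $P$. (If $P$ is empty, any element of $H^+$ is vacuously a right bound.)

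For (b), observe that $z\in H^+$ is a left bound of the pair $\{x,y\}$ exactly when $\|z - x_\perp\|\le x_h$, $\|z - y_\perp\|\le y_h$ and $z_h>0$, i.e.\ when $z$ lies in $\overline{B}(x_\perp,x_h)\cap\overline{B}(y_\perp,y_h)$ and strictly above $H$. For the ``only if'' direction, given such a $z$ the triangle inequality yields $\|x_\perp - y_\perp\| \le \|x_\perp - z\| + \|z - y_\perp\| \le x_h + y_h$; moreover, if equality held throughout, the equality case of the triangle inequality in a Hilbert space would force $z$ to be a convex combination of $x_\perp$ and $y_\perp$, hence $z\in H$ and $z_h = 0$, contradicting $z\in H^+$. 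So the inequality is strict. For the ``if'' direction, write $D = \|x_\perp - y_\perp\|$ and assume $D < x_h + y_h$. If $D = 0$, the point $z = \min(x_h,y_h)\,h + x_\perp$ lies in $H^+$ and is a left bound. If $D>0$, I look for $z = \lambda\,h + \bigl((1-t)\,x_\perp + t\,y_\perp\bigr)$ with $t\in[0,1]$ and $\lambda>0$; then $\|z - x_\perp\|^2 = \lambda^2 + t^2 D^2$ and $\|z - y_\perp\|^2 = \lambda^2 + (1-t)^2 D^2$, so it suffices to have $tD < x_h$ and $(1-t)D < y_h$ and then pick $\lambda>0$ small enough. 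Such a $t$ exists in $[0,1]$ precisely because $D < x_h + y_h$ makes the intervals $[0, x_h/D)$ and $(1 - y_h/D, 1]$ overlap; any sufficiently small $\lambda>0$ then completes the construction, producing a left bound with $z_h = \lambda > 0$.

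The only slightly delicate point is the boundary case of part (b): excluding the equality $\|x_\perp - y_\perp\| = x_h + y_h$ is exactly where the restriction to $H^+$ (strict $z_h>0$, not $z_h\ge0$) is essential, since a point realizing equality in the triangle inequality is pinned to the hyperplane $H$. Everything else is routine manipulation of norms and the Pythagorean splitting along $h$.
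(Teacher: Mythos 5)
Your proof is correct and follows essentially the same route as the paper: part (a) by taking a point $r\,h+c$ high enough above a fixed base point of $H$ (the paper uses $c=0$), and the only-if half of (b) via the triangle inequality for $x_\perp$, $z$, $y_\perp$, with strictness coming from the fact that a point of $H^+$ cannot lie on the segment $[x_\perp,y_\perp]\subseteq H$. The only difference is that for the converse of (b) the paper merely appeals to ``when two (hemi)spheres overlap,'' whereas you make this explicit by constructing $z=\lambda\,h+(1-t)x_\perp+t\,y_\perp$ with $tD<x_h$, $(1-t)D<y_h$ and $\lambda>0$ small; this is a welcome filling-in of the detail, not a different method.
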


\begin{proof} 
  (a) Choose $\lambda > 0$ with $\|p\| \le \lambda$ for all $p\in P$.
  Then $y = \lambda \, h $ is a right bound of $P$, because
  $\| p - y_\perp \| = \|p\| \le \lambda = y_h $ for every $p\in P$.

  (b) Let $p$ be a left bound of $x$ and $y$. Then $p\in H^+$.
  Therefore $p$ is not on the line segment between $x_\perp$ and
  $y_\perp$.  This implies
  $\|x_\perp - y_\perp \| < \|x_\perp - p\| + \|p - y_\perp\| \le x_h +
  y_h$.  The converse follows from considering when two (hemi)spheres
  overlap.
\end{proof} 

\begin{remark}
  It follows that there are pairs without a left bound.  For example,
  let $e$ be a unit vector orthogonal to $h$, take $x = h$ and
  $y = h + 2 \, e$.  The pair $x$, $y$ has no left bound because
  $\|x_\perp-y_\perp\| = 2$ and $x_h=y_h = 1$.
 This implies that inversion of the orientation gives a completely
 different oriented set. 
\end{remark}

\begin{lemma}\label{thm:hyperbolicOrientationClosed}
The relation $\preceq$ on the complete metric space $(H^+,d_\mathcal{H})$ is closed.
\end{lemma}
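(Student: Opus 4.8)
The plan is to reduce this to a one-line continuity computation in the ambient Hilbert space $E$. First I would recall the observation made just above the lemma, that the identity map is a homeomorphism between $(H^+,d_\mathcal{H})$ and $(H^+,d_E)$, where $d_E(x,y)=\|x-y\|$. Since whether a binary relation is a closed subset of $S\times S$ depends only on the topology of $S$, it is enough to prove that $\preceq$ is closed when $H^+$ carries the Euclidean subspace topology it inherits from $E$.

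Working in that topology, I would take sequences $(x_n)$, $(y_n)$ in $H^+$ with $x_n\to x\in H^+$, $y_n\to y\in H^+$ and $x_n\preceq y_n$ for all $n$; unfolding the definition, $\|x_n-(y_n)_\perp\|\le(y_n)_h$. The maps $w\mapsto w_h=(h,w)$ and $w\mapsto w_\perp=w-(h,w)\,h$ are bounded linear operators, hence continuous, so $(y_n)_h\to y_h$ and $x_n-(y_n)_\perp\to x-y_\perp$; by continuity of the norm, $\|x_n-(y_n)_\perp\|\to\|x-y_\perp\|$. Passing to the limit in the inequality yields $\|x-y_\perp\|\le y_h$, i.e.\ $x\preceq y$, which is what we want. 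Equivalently, and even more briefly, one can note that $\{(x,y)\in H^+\times H^+:\|x-y_\perp\|\le y_h\}$ is the preimage of $(-\infty,0]$ under the continuous function $(x,y)\mapsto\|x-y_\perp\|-y_h$, and is therefore closed in $H^+\times H^+$.

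The only point requiring any care --- and the only place where the hypotheses genuinely matter --- is that ``closed'' here must be read with respect to $d_\mathcal{H}$, not the Euclidean metric: as a subset of $E\times E$ the relation $\preceq$ is \emph{not} closed, since a sequence of comparable pairs can converge to a pair whose second coordinate has height $0$ and hence lies outside $H^+$. The homeomorphism remark is precisely what lets us sidestep this by transporting the question to the Euclidean topology on $H^+$, after which, as above, nothing substantial remains to be done.
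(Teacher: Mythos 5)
Your proof is correct and follows essentially the same route as the paper: show closedness for the Euclidean topology on $H^+$ by writing $\preceq$ as the preimage of a closed half-line under the continuous map $(x,y)\mapsto y_h-\|x-y_\perp\|$, then transport the conclusion to $(H^+,d_\mathcal{H})$ via the homeomorphism noted before the lemma. One small remark on your closing paragraph: since $d_\mathcal{H}$ and $d_E$ induce the same topology on $H^+$, the real distinction is closedness relative to $H^+\times H^+$ versus closedness in $E\times E$, not which metric is used --- but your argument handles this correctly in any case.
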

\begin{proof}
We first show that the relation is closed on the metric space $(H^+,d_E)$ with the Euclidean metric.
To this end, consider the function $f:H^+\times H^+\rightarrow\IR$ given by $f(x,y)=y_h-\|x-y_\perp\|$.
As $f$ is continuous under the Euclidean metric, and $\preceq$ is the preimage of the closed set $\{t\mid t\ge0\}$, $\preceq$ is closed.
Since $(H^+,d_\mathcal{H})$ and $(H^+,d_E)$ are homeomorphic, $\preceq$ is closed in $(H^+,d_\mathcal{H})$ as well.
\end{proof}

\begin{lemma}\label{thm:hyperbolicDiscriminator}
On the complete metric space $(H^+,d_\mathcal{H})$, $h_\mathcal{H}$ is a discriminator.
\end{lemma}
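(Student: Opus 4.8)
The plan is to unwind the definition of discriminator directly, reducing the required implication to a single elementary estimate that relates the hyperbolic distance $d_\mathcal{H}(x,y)$ to the heights $x_h=(h,x)$ and $y_h=(h,y)$. Given $\eps>0$, I would first fix the threshold $\delta$. Since $\cosh\eps>1$, the number $\ln(\cosh\eps)$ is strictly positive, so any $\delta$ with $0<\delta<\ln(\cosh\eps)$ is admissible; I claim such a $\delta$ works. (Equivalently, the inequality $\delta<\ln(\cosh\eps)$ is exactly what makes $\arcosh(e^\delta)<\eps$, which is the bound the argument will produce.)

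Next I would take $x,y\in H^+$ with $x\preceq y$ and $h_\mathcal{H}(y)<h_\mathcal{H}(x)+\delta$, the latter meaning $y_h<e^\delta x_h$. Decomposing each vector into its $h$-component and its orthogonal part, the relation $x\preceq y$ reads $\|x-y_\perp\|^2=x_h^2+\|x_\perp-y_\perp\|^2\le y_h^2$; in particular $x_h\le y_h$ and $\|x_\perp-y_\perp\|^2\le y_h^2-x_h^2$. Since $\|x-y\|^2=(y_h-x_h)^2+\|x_\perp-y_\perp\|^2$, this gives
\[
\|x-y\|^2\le(y_h-x_h)^2+y_h^2-x_h^2=2\,y_h\,(y_h-x_h),
\]
so that $\dfrac{\|x-y\|^2}{2\,x_h\,y_h}\le\dfrac{y_h-x_h}{x_h}=\dfrac{y_h}{x_h}-1<e^\delta-1<\cosh\eps-1$. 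As $\arcosh$ is increasing, this yields $d_\mathcal{H}(x,y)=\arcosh\!\bigl(1+\tfrac{\|x-y\|^2}{2\,x_h\,y_h}\bigr)<\arcosh(\cosh\eps)=\eps$, which is exactly the discriminator condition.

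I do not expect a genuine obstacle here. The one point that needs a little care is the Pythagorean bookkeeping that turns $x\preceq y$ into the clean bound $\|x-y\|^2\le 2\,y_h\,(y_h-x_h)$: one has to observe that the leftover cross term $y_h^2-x_h^2$ combines with $(y_h-x_h)^2$ into a single multiple of $y_h-x_h$, which is what cancels against the $y_h$ in the denominator of the distance formula. Everything else is just monotonicity of $\ln$ and $\arcosh$ together with the explicit expression for $d_\mathcal{H}$, so the proof should be short. (Note also that the case $x=y$ is automatically covered, since then $d_\mathcal{H}(x,y)=0<\eps$.)
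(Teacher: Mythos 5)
Your proof is correct, and it reaches the paper's conclusion by the same underlying computation but packages it more directly. The paper argues informally that the supremum of $d_\mathcal{H}(x,y)$ over the set $\{x \mid x\preceq y,\ \ln(y_h)<\ln(x_h)+\delta\}$ is approached on the boundary where $\|x-y_\perp\|=y_h$ and $y_h=x_h e^\delta$, computes that boundary value to be exactly $\arcosh(e^\delta)$, and then invokes continuity of $\arcosh$ as $\delta\downarrow 0$ to finish. You instead prove the inequality outright: the Pythagorean bookkeeping $\|x-y\|^2\le (y_h-x_h)^2+(y_h^2-x_h^2)=2\,y_h\,(y_h-x_h)$ turns $x\preceq y$ into $1+\tfrac{\|x-y\|^2}{2x_hy_h}\le \tfrac{y_h}{x_h}<e^\delta$, and the explicit threshold $\delta<\ln(\cosh\eps)$ replaces the limiting argument. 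What this buys is a fully rigorous two-line estimate with no extremal/boundary reasoning (the paper's ``maximize $\|x-y\|$ and minimize $x_h$'' step is plausible but not justified in detail), at the cost of not exhibiting that $\arcosh(e^\delta)$ is in fact the sharp bound, which the paper's extremal viewpoint makes visible. Both arguments are sound; yours is arguably the cleaner write-up of the same computation.
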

\begin{proof}
In order to prove that function $h_\mathcal{H} $ is a discriminator, we
try, given $y\in H^+$ and $\delta > 0$, to bound the distance
$d_\mathcal{H}(x, y)$ for all vectors $x$ in the set 
\begin{multline*}
L_\delta(y) = \{x\in H^+ \mid x \preceq y \Land h_\mathcal{H}(y)
  < h_\mathcal{H} (x) + \delta \} \\
= \{ x\in H^+ \mid \|x-y_\perp\| \le y_h \Land \ln (y_h) <
  \ln(x_h) + \delta \} .
\end{multline*}
In view of the formula for $d_\mathcal{H}(x, y)$, the maximal value of
this distance is obtained by maximizing $\|x-y\|$ and minimizing
$x_h$.  The maximal distance is therefore reached when
$\|x-y_\perp\| = y_h $ and $\ln (y_h) = \ln(x_h) + \delta$.  This
maximal distance is not reached in $L_\delta(y)$, however, but only on
its boundary.  In any case, such vectors $x$ give the least upper
bound of the distance.  If we write $x_u = \| x_\perp-y_\perp\|$, these two
equations become $x_h^2 + x_u^2 = y_h^2$ and $ y_h = x_he^\delta $.
After some calculation, one finds that
$d_\mathcal{H}(x, y) = \arcosh(e^\delta)$ holds because
\begin{equation*}
1+\frac{\|x-y\|^2}{2\,x_h\,y_h}
= \frac{2\,x_h\,y_h+(x_h-y_h)^2+x_u^2}{2\,x_h\,y_h}
= \frac{2\,y_h^2}{2\,x_h\,y_h}
= e^\delta
.\end{equation*}
Using continuity of the $\arcosh$
function for $\delta \downarrow 0$, one finds that $h_\mathcal{H}$ is a
discriminator.
\end{proof}

\begin{theorem} \label {jointhm}
The pair $(H^+, \preceq)$ is a cc sponge. 
\end{theorem}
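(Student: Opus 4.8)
The proof goes through \cref{metric2sponge}. We already know that $(H^+,d_\mathcal{H})$ is a complete metric space, that $\preceq$ is topologically closed (\cref{thm:hyperbolicOrientationClosed}), and that $h_\mathcal{H}(x)=\ln(x_h)$ is continuous and a discriminator (\cref{thm:hyperbolicDiscriminator}); recall also that $\preceq$ is an orientation by \cref{cor1}. Hence it suffices to show that every left-bounded pair $\{x,y\}$ in $H^+$ has a meet. If $x\preceq y$ or $y\preceq x$, the smaller of the two is the meet, so assume $x$ and $y$ are incomparable. A translation of $E$ by a vector in $H$ preserves $\preceq$ (and so carries meets to meets), so we may assume $x_\perp=-y_\perp$; then $x_\perp\ne\mathbf{0}$, since otherwise $x=x_h\,h$ and $y=y_h\,h$ would be comparable. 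Write $x_\perp=-a\,e$ and $y_\perp=a\,e$ with $e\in H$ a unit vector and $a>0$. By \cref{hyper_bounds}(b), left-boundedness means $2a<x_h+y_h$, while incomparability gives $4a^2+x_h^2>y_h^2$ and $4a^2+y_h^2>x_h^2$, and hence $|x_h^2-y_h^2|<4a^2$.

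For $z\in E$ we have $z\preceq x$ iff $z\in B_x\cap H^+$, where $B_x$ is the closed ball with centre $x_\perp$ and radius $x_h$, and likewise for $y$; thus the set of left bounds of $\{x,y\}$ is $B_x\cap B_y\cap H^+$. The meet will be the point $m=s\,e+t\,h$ determined by $\|m-x_\perp\|=x_h$, $\|m-y_\perp\|=y_h$ and $t>0$: subtracting the first two conditions forces $s=(x_h^2-y_h^2)/(4a)$, and then $t^2=x_h^2-(s+a)^2$ (after which $\|m-y_\perp\|=y_h$ holds automatically). A short calculation factors the numerator of $t^2$ as $(y_h+x_h-2a)(y_h-x_h+2a)(x_h+2a-y_h)(x_h+2a+y_h)$ over the positive denominator $16a^2$, and the two hypotheses $2a<x_h+y_h$ and $|x_h^2-y_h^2|<4a^2$ make all four factors positive; so $t^2>0$ and $m\in H^+$. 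Since $m$ lies on $\partial B_x$ and on $\partial B_y$ and $m\in H^+$, we have $m\preceq x$ and $m\preceq y$, so $m$ is a left bound of $\{x,y\}$.

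It remains to show that every left bound is $\preceq m$, for which it suffices to prove $B_x\cap B_y\subseteq B_m$ (a left bound lies in $H^+$, and on $H^+$ the conditions $z\preceq m$ and $z\in B_m$ coincide). For a closed ball $B$ with centre $c$ and radius $r$ put $\pi_B(w)=\|w-c\|^2-r^2=\|w\|^2-2(w,c)+(\|c\|^2-r^2)$, so that $B=\{w\mid\pi_B(w)\le0\}$; crucially, the quadratic part $\|w\|^2$ is the same for every ball. Because $|s|<a$ (by $|x_h^2-y_h^2|<4a^2$), the point $m_\perp=s\,e$ lies strictly between $x_\perp$ and $y_\perp$, so $m_\perp=\alpha\,x_\perp+\beta\,y_\perp$ with $\alpha,\beta>0$ and $\alpha+\beta=1$. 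Then $\pi_{B_m}-\alpha\,\pi_{B_x}-\beta\,\pi_{B_y}$ has neither a quadratic nor a linear term, hence is a constant, and that constant is $0$ because $m$ lies on all three spheres $\partial B_x$, $\partial B_y$, $\partial B_m$. Therefore $\pi_{B_m}=\alpha\,\pi_{B_x}+\beta\,\pi_{B_y}$ identically, so $\pi_{B_x}(w)\le0$ and $\pi_{B_y}(w)\le0$ force $\pi_{B_m}(w)\le0$. This gives $B_x\cap B_y\subseteq B_m$, hence $m$ is the meet of $\{x,y\}$, and \cref{metric2sponge} concludes the proof.

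The real obstacle is this last step. As in the inner-product sponge, a merely ``extremal'' common left bound — here, the one that is highest with respect to $h_\mathcal{H}$ — need not be the meet; one must genuinely show that $m$ dominates every left bound \emph{in the orientation}, i.e.\ that $B_x\cap B_y\subseteq B_m$. Writing $\pi_{B_m}$ as a convex combination of $\pi_{B_x}$ and $\pi_{B_y}$ is what makes this work, and it is precisely here that the quantitative hypotheses enter, through the strict positivity of $t$ and of $\alpha,\beta$, which in turn rest on left-boundedness ($2a<x_h+y_h$) and incomparability ($|x_h^2-y_h^2|<4a^2$).
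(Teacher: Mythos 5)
Your proof is correct, and it follows the same overall route as the paper: reduce to left-bounded pairs via \cref{metric2sponge} (using \cref{thm:hyperbolicOrientationClosed,thm:hyperbolicDiscriminator} and the continuity of $h_\mathcal{H}$), take as meet candidate the point where the boundary spheres of the two balls of left bounds intersect inside $H^+$, check it is a common left bound, and then show it dominates every left bound. The differences are in execution. The paper obtains the intersection point geometrically (in the plane through $x$, $x_\perp$, $y$, $y_\perp$ the two half disks intersect because $\|x_\perp-y_\perp\|<x_h+y_h$, and by incomparability neither contains the other), whereas you normalize by a translation in $H$ and produce it in coordinates, with the factorization of $t^2$ doing the work of that geometric argument; both uses of left-boundedness and incomparability are equivalent. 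For the domination step the paper projects an arbitrary left bound onto the line through $x_\perp$ and $y_\perp$ and makes a two-case estimate on the sign of the projection coordinate, while you prove the inclusion $B_x\cap B_y\subseteq B_m$ from the identity $\pi_{B_m}=\alpha\,\pi_{B_x}+\beta\,\pi_{B_y}$, a pencil-of-spheres (power of a point) argument; this is a clean alternative, valid verbatim in the infinite-dimensional setting, and it buys a one-line verification where the paper needs the explicit decomposition into the coordinate $t$ along the line and the distance $s$ to it. No gaps.
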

\begin{proof}
We have already shown that $\preceq$ is closed, and that $h_\mathcal{H}$ is a discriminator. It is also clear that $h_\mathcal{H}$ is continuous. Thus, if we can show that every left-bounded pair in $H^+$ has a meet, we can apply \cref{metric2sponge}.
Now, let $x$, $y$ be a left-bounded pair in $H^+$.  If $x$ and $y$ are
  comparable, one of them is the meet.  We may therefore assume that
  $x$ and $y$ are not comparable.  It follows that
  $x_\perp \ne y_\perp$.

  Let $e$ be a unit vector pointing from $x_\perp$ to $y_\perp$.  Let
  $a = \|y_\perp-x_\perp\|$.  Then $a > 0$ and
  $y_\perp = x_\perp + a\, e$.  \Cref  {hyper_bounds}(b)
  implies that $ a < x_h+y_h$. 

  Let $S$ be the plane that contains the points $x$, $x_\perp$, $y$,
  $y_\perp$.  Let $S^+ = S \cap H^+$.  In the halfplane $S^+$, the set
  of left bounds of $x$ is the half disk with center $x_\perp$ and
  radius $x_h$; similarly for $y$.  These half disks intersect because
  $ a < x_h+y_h$.  As $x$ and $y$ are not comparable, it is not the
  case that one of the half disks is contained in the other.
  Therefore, the corresponding circles meet in two points, one of
  which is in $ S^+$.  Assume that the circles meet in $z \in S^+$.
  As $z$ is contained in both
  half disks, it is a left bound of both $x$ and $y$.
  We claim that $z$ is the meet of $x$ and $y$.

  The projection $z_\perp$ is on the line through $x_\perp$ and
  $y_\perp$, and can therefore be written
  $z_\perp = x_\perp + b\, e $.  As $x$ and $y$ are not comparable,
  we have $0 < b < a$.  Let $c = a - b$.  Then
  $y_\perp = z_\perp + c\, e$.   It follows that
\begin{equation} \label {oppspheres1}
\left.\begin{aligned}
b^2 + z_h^2 &= x_h^2\\
c^2 + z_h^2 &= y_h^2\\
b + c &= a .
\end{aligned}\;\right\}
\end{equation}

In order to prove that $z$ is the meet of $x$ and $y$, it remains to
prove that any left bound $u$ of $x$ and $y$ is a left bound of $z$.
Let $z_\perp + t\, e$ be the orthogonal projection of $u$ onto the line through
$x_\perp$ and $y_\perp$, and let $s$ be the distance of $u$ to this
line.  We now have
\begin{align*}
  & u \preceq x \Land u \preceq y  \\
  \eq&\text{\Com\ definition \moc}\\
  &  (t+b)^2 + s^2 \le x_h^2 \Land (t - c)^2 + s^2 \le y_h^2 \\
  \eq&\text{\Com\ \Cref {oppspheres1} \moc}\\
  & t^2 + 2 b t + s^2 \le z_h^2 \Land t^2 - 2ct +s^2 \le z_h^2 \\
  \Rightarrow&\text{\Com\ $b > 0$ and $c > 0$, and hence $2bt \ge 0$ or $2ct \le 0$  \moc}\\
  & t^2 + s^2 \le z_h^2 \\
  \eq&\text{\Com\ definition \moc}\\
  & u \preceq z .
\end{align*}
This proves that $z$ is the meet of $x$ and $y$. Considering \cref{thm:hyperbolicOrientationClosed,thm:hyperbolicDiscriminator}, and observing that $h_\mathcal{H}$ is continuous, we can now apply \cref{metric2sponge} to conclude that $(H^+,\preceq)$ is a cc sponge.
\end{proof}

\section {The geometry of the various sponges}

In order to compare the various sponges we constructed, it is useful
to examine the left cones $L(x)$ and the right cones $R(x)$ of elements in the different sponges.

In a sponge group, all left and right cones are isomorphic because 
$R(x) = x + R(0)$ and $L(x) = x + L(0)$ and $ L(0) = -R(0)$. 

In the inner-product sponge of \cref {ip_sponge}, every right
cone $R(x)$ for $x\ne 0$, is a half space, while the left cone $L(x)$ is
the ball centered at $\half x$ with radius $\half\|x\|$.  In the hyperbolic sponge of \cref
{hyp_sponge}, every left cone is a half ball, while the right cone is
bounded by a component of a hyperboloid.

The inner-product sponge has precisely one left-extreme point,
viz. the origin of the space, and no right-extreme points.
The hyperbolic sponge has no extreme points.

In the inner-product sponge, every nonempty subset has a meet, which
can be the origin.  In the hyperbolic sponge, every finite or bounded
subset has a join.

In the inner-product sponge, the right cones $R(x)$ and $R(y)$ are
disjoint if and only if $x \ne 0$ and $y = \lambda x$ for some
$\lambda < 0$.  In the hyperbolic sponge, the left cones $L(x)$ and
$L(y)$ are disjoint if and only if
$x_h + y_h \le \|x_\perp - y_\perp\|$.

\providecommand{\doi}[1]{doi:\discretionary{}{}{}\href{http://dx.doi.org/#1}{\urlstyle{same}\nolinkurl{#1}}}

\end{document}